\documentclass{amsart}

\usepackage{amsmath}
\usepackage{amsthm}
\usepackage{amsopn}
\usepackage{amssymb}
\usepackage[cmtip, all]{xy}
\usepackage{amsbsy}
\usepackage{stmaryrd}

\theoremstyle{definition}
\newtheorem{Lem}{Lemma}[section]

\newtheorem{Thm}[Lem]{Theorem}


\newtheorem{Rk}[Lem]{Remark}
\newtheorem{Def}[Lem]{Definition}

\DeclareMathAlphabet{\mathpzc}{OT1}{pzc}{m}{it}

\DeclareMathOperator*{\holim}{holim}

\DeclareMathOperator*{\colim}{colim}

\newcommand{\spt}{\mathrm{Spt}}
\newcommand{\cat}{\mathrm{Spt}_G}
\newcommand{\ccat}{c(\mathrm{Spt}_G)}
\newcommand{\typical}{{\overset{\textbf{\Large{.}}}{}}}
\newcommand{\zig}{\addtocounter{Lem}{1}\tag{\theLem}} 
\newcommand{\superc}{\scriptscriptstyle{C}} 
\newcommand{\fibrant}{{\scriptstyle{\mathpzc{Fib}}}}
 
\pagestyle{plain}

\def\:{\colon}

\begin{document}
 
\title{Delta-discrete $G$-spectra and iterated \\homotopy fixed points}
\author{Daniel G. Davis$\sp 1$}
\address{Department of Mathematics, University of Louisiana at 
Lafayette, Lafayette, LA 70504, U.S.A.}
\email{dgdavis@louisiana.edu}
\begin{abstract}
Let $G$ be a profinite group with finite virtual cohomological dimension 
and let $X$ be a discrete $G$-spectrum. If $H$ and $K$ are closed subgroups of 
$G$, with $H \vartriangleleft K$, then, in general, 
the $K/H$-spectrum $X^{hH}$ is not known 
to be a continuous $K/H$-spectrum, so that it is not known (in general) 
how to define the iterated homotopy fixed point spectrum 
$(X^{hH})^{hK/H}$. To address this situation, we define 
homotopy fixed points for 
delta-discrete $G$-spectra and show that the setting of 
delta-discrete 
$G$-spectra gives a good framework within which to work. 
In particular, we show 
that by using delta-discrete $K/H$-spectra, there is 
always an 
iterated homotopy fixed point spectrum, 
denoted $(X^{hH})^{h_\delta K/H}$, 
and it is just $X^{hK}$. Additionally, we show that for any 
delta-discrete $G$-spectrum $Y$, $(Y^{h_\delta H})^{h_\delta K/H} 
\simeq Y^{h_\delta K}$. Furthermore, if $G$ is an {\em arbitrary} profinite 
group, there is a delta-discrete $G$-spectrum $X_\delta$ that is 
equivalent to $X$ and, though $X^{hH}$ is not even known in general 
to have a $K/H$-action, there is always an equivalence 
$\bigl((X_\delta)^{h_\delta H}\bigr)^{h_\delta K/H} \simeq 
(X_\delta)^{h_\delta K}.$ Therefore, delta-discrete 
$L$-spectra, by letting $L$ equal $H, K,$ and $K/H$, 
give a way of resolving undesired deficiencies in our 
understanding of homotopy fixed points for discrete $G$-spectra.
\end{abstract}
\maketitle 
\footnotetext[1]{The author 
was supported by 
a grant from the Louisiana Board of Regents Support Fund.}

\section{Introduction}
\par
Let $\mathrm{Spt}$ be the stable model category of Bousfield-Friedlander 
spectra of simplicial sets. Also, given a profinite group $G$, 
let $\mathrm{Spt}_G$ 
be the model category of discrete $G$-spectra, in which a morphism $f$ is a 
weak equivalence (cofibration) if and only if $f$ is a weak equivalence (cofibration) 
in $\mathrm{Spt}$ (see \cite[Section 3]{cts}). Given a fibrant replacement functor 
\[(-)_{fG} \: \cat \rightarrow \cat, \ \ \ X \mapsto X_{fG},\] so that there is a natural 
trivial 
cofibration $X \rightarrow X_{fG}$, with $X_{fG}$ fibrant, in $\cat$, the $G$-homotopy 
fixed point spectrum $X^{hG}$ is defined by 
\[X^{hG} = (X_{fG})^G.\]
\par
Let $H$ and $K$ be closed subgroups of $G$, with $H$ normal in $K$, and, 
as above, let $X$ 
be a discrete $G$-spectrum. Then $K/H$ is a profinite group, and 
it is reasonable to expect that $X^{hH}$ is some 
kind of a continuous $K/H$-spectrum, so that the iterated homotopy fixed point 
spectrum $(X^{hH})^{hK/H}$ can be formed. Additionally, one might 
expect $(X^{hH})^{hK/H}$ to just be 
$X^{hK}$: following Dwyer and Wilkerson (see 
\cite[pg. 434]{wilkerson}), when these two homotopy fixed point spectra 
are equivalent to each other, for all $H$, $K$, and $X$,  we say that homotopy 
fixed points for $G$ have the {\em transitivity property}.  
\par
Under hypotheses on $G$ and $X$ that are different from those 
above, 
there are various cases 
where the iterated homotopy fixed point spectrum is well-behaved 
along the lines suggested above. 
For example,  
by \cite[Lemma 10.5]{wilkerson}, when $G$ is any discrete group, with 
$N \vartriangleleft G$ and 
$\mathcal{X}$ 
any $G$-space, the iterated homotopy fixed point space 
$(\mathcal{X}^{hN})^{hG/N}$ is always defined and is just $\mathcal{X}^{hG}$. 
Similarly, by \cite[Theorem 7.2.3]{Rognes}, if 
$E$ is an $S$-module and $A \rightarrow B$ is a faithful 
$E$-local $G$-Galois extension 
of commutative $S$-algebras, where $G$ is a stably dualizable group, then if 
$N$ is an allowable normal subgroup of $G$, 
$(B^{hN})^{hG/N}$ is defined and is equivalent to $B^{hG}$. 
\par
Also, by 
\cite[Theorem 4.6, (4)]{quicklubin}, if $G$ is any profinite group and $P$ is a 
symmetric profinite $G$-spectrum, and if $\hat{P}^{hG}$ denotes the 
profinite homotopy fixed point spectrum (see \cite[Remark 4.2]{quicklubin}), 
then, if $N$ is any closed normal subgroup of $G$, there is a stable 
equivalence \[(\hat{P}^{hN})^{hG/N} \simeq \hat{P}^{hG}\] of profinite 
symmetric spectra. 
\par
Let $k$ be a spectrum such that 
the Bousfield localization $L_k(-)$ is equivalent to $L_ML_T(-),$ 
where $M$ is a finite spectrum and $T$ is smashing, and 
let $A$ be a $k$-local 
commutative $S$-algebra. If 
a spectrum $E$ is a consistent profaithful $k$-local 
profinite $G$-Galois extension of $A$ of finite vcd (the meaning of 
these terms is explained in \cite{joint}), 
then, by \cite[Proposition 7.1.4, Theorem 7.1.6]{joint}, 
\[(E^{h_kN})^{h_kG/N} \simeq E^{h_kG},\] 
for any closed normal subgroup $N$ of $G$, where, for example, 
$(-)^{h_kG}$ denotes 
the $k$-local homotopy fixed points (as defined in 
\cite[Section 6.1]{joint}). 
\par
Finally, let $G$ be any compact Hausdorff group, let $R$ be an orthogonal 
ring spectrum satisfying the assumptions of 
\cite[Section 11.1, lines 1--3]{fausk}, and let $\mathcal{M}_R$ 
be the category of $R$-modules. Also, let $M$ be any pro-$G$-$R$-module 
(so that, for example, the pro-spectrum $M$ is a pro-$R$-module 
and a pro-orthogonal 
$G$-spectrum). By \cite[Proposition 11.5]{fausk}, 
if $N$ is any closed normal subgroup of $G$, 
then there is an equivalence \[(M^{h_GN})^{hG/N} \simeq M^{hG}\] 
of pro-spectra in the Postnikov 
model structure on the category of pro-objects in $\mathcal{M}_R$. 
Here, $M^{h_GN}$ is the $N$-$G$-homotopy fixed point pro-spectrum 
of \cite[Definition 11.3]{fausk} and, as discussed in \cite[pg.~165]{fausk}, 
there are cases when $M^{h_GN} \simeq M^{hN}$.
\par
The above results show that when one works with the hypotheses 
of $G$ is profinite and $X \in \mathrm{Spt}_G$ for the first time, 
it certainly is not unreasonable to hope that the expression 
$(X^{hH})^{hK/H}$ makes sense and that it fits into an equivalence 
$(X^{hH})^{hK/H} \simeq X^{hK}$. But it turns out that, 
in this setting, in general, 
these constructions are more subtle than the above 
results might suggest. For example, as 
explained in 
\cite[Section 5]{fibrantmodel}, $X^{hH}$ is not even 
known to be a $K/H$-spectrum. However, when $G$ has finite virtual 
cohomological dimension ({\em finite vcd}; that is, there 
exists an open subgroup $U$ and a positive integer $m$ such that the 
continuous cohomology $H^s_c(U;M) = 0$, whenever $s >m$ and $M$ is a discrete 
$U$-module), then \cite[Corollary 5.4]{fibrantmodel} shows that $X^{hH}$ is 
always weakly equivalent to a $K/H$-spectrum. 
\par
But, as explained 
in detail in 
\cite[Section 3]{iterated}, even when $G$ has finite vcd, it is not known, in general, 
how to 
view $X^{hH}$ as a continuous $K/H$-spectrum (in the sense of 
\cite{cts, joint}), so that it is not known how to form $(X^{hH})^{hK/H}$. 
For example, when $G = K = \mathbb{Z}/p \times \mathbb{Z}_q$, where 
$p$ and $q$ are distinct primes, and 
$H = \mathbb{Z}/p$, Ben Wieland found an example of a discrete $G$-spectrum 
$Y$ such that 
$Y^{hH}$ is not a discrete $K/H$-spectrum (see \cite[Appendix A]{iterated}). 
More generally, it is not known if $Y^{hH}$ is a continuous $K/H$-spectrum, and 
there is no known construction of $(Y^{hH})^{hK/H}$.
\par
By \cite[Section 4]{iterated}, if $G$ is any profinite group and $X$ is a hyperfibrant 
discrete $G$-spectrum, then $X^{hH}$ is always a discrete $K/H$-spectrum, and 
hence, $(X^{hH})^{hK/H}$ is always defined. However, it is not known if 
$(X^{hH})^{hK/H}$ must be equivalent to $X^{hK}$. Also, \cite[Section 4]{iterated} 
shows that if $X$ is a totally hyperfibrant discrete $G$-spectrum, then 
$(X^{hH})^{hK/H}$ is $X^{hK}$. But, as implied by our remarks above regarding 
$Y$, it is not known that all the objects in $\cat$ are hyperfibrant, let alone 
totally hyperfibrant.
\par
The above discussion makes it clear that there are nontrivial gaps in our 
understanding of iterated homotopy fixed points in the world of $\mathrm{Spt}_G$. 
To address these deficiencies,
in this paper 
we define and study homotopy fixed points for 
{\em delta-discrete $G$-spectra}, $(-)^{h_\delta G}$, 
and within this framework, when $G$ has finite vcd 
and $X \in \cat$, 
we find that the iterated homotopy fixed point spectrum 
$(X^{hH})^{h_\delta K/H}$ is always defined and is equivalent to $X^{hK}$. 
In fact, when $G$ has finite vcd, 
if $Y$ is one of these delta-discrete $G$-spectra, 
then there is an equivalence
\[(Y^{h_\delta H})^{h_\delta K/H} \simeq Y^{h_\delta K}.\] 
\par
Before introducing this paper's approach to iterated homotopy 
fixed points in $\mathrm{Spt}_G$ in more detail, we quickly discuss 
some situations 
where the difficulties with iteration that were described earlier 
vanish, since 
it is helpful to better understand where the obstacles in 
``iteration theory" are.
\par
In general, for any profinite group $G$ and $X \in \mathrm{Spt}_G$, 
\[(X^{h\{e\}})^{hK/\{e\}} = \bigl((X_{fG})^{\{e\}}\bigr)^{hK} \simeq X^{hK}\] and, 
if $H$ is open in $K$, then $X_{fK}$ is fibrant in $\mathrm{Spt}_H$ and 
$(X_{fK})^H$ is fibrant in $\mathrm{Spt}_{K/H}$, so that \[(X^{hH})^{hK/H} 
= ((X_{fK})^H)^{hK/H} \simeq ((X_{fK})^H)^{K/H} = (X_{fK})^K
= X^{hK}\] (see \cite[Proposition 3.3.1]{joint} and \cite[Theorem 3.4]{iterated}). 
Thus, in general, the difficulties in forming the iterated homotopy 
fixed point spectrum occur only when $H$ is a nontrivial non-open 
(closed normal) subgroup of $K$.
\par
Now let $N$ be any nontrivial closed normal subgroup of 
$G$. 
There are cases where, thanks to a particular property that $G$ has, 
the spectrum $(X^{hN})^{hG/N}$ 
is defined, with 
\begin{equation}\label{thedesiredguy}\zig
(X^{hN})^{hG/N} \simeq X^{hG}.
\end{equation} 
To explain this, we assume that $G$ is infinite:
\begin{list}{}{\setlength{\leftmargin}{1.8\parindent} 
\setlength{\labelwidth}{\parindent}
\setlength{\rightmargin}{1.1\parindent}}
\item[$\bullet$]
if $G$ has finite cohomological dimension (that is, there exists a positive 
integer $m$ such that $H^s_c(G; M) = 0$, whenever $s > m$ and $M$ is a 
discrete $G$-module), then, by \cite[Corollary 3.5.6]{joint}, $X^{hN}$ is a 
discrete $G/N$-spectrum, so that $(X^{hN})^{hG/N}$ 
is defined, and, as hoped, the equivalence in (\ref{thedesiredguy}) 
holds;
\item[$\bullet$]
the profinite group $G$ is {\em just infinite} if $N$ always has 
finite index in $G$ (for more details about such 
groups, see \cite{justinfinite}; this interesting class of 
profinite groups includes, 
for example, the 
finitely generated pro-$p$ 
Nottingham group over the finite field $\mathbb{F}_{p^n}$, 
where $p$ is any prime and $n \geq 1$, and 
the just infinite profinite branch groups (see 
\cite{grigorchuk})), and thus, if $G$ is just infinite, then $N$ is always 
open in $G$, so that, as explained above, 
(\ref{thedesiredguy}) is valid; and
\item[$\bullet$] 
if $G$ has the property that every nontrivial 
closed subgroup is open, then, by \cite[Corollary 1]{oates} 
(see also \cite{dikran}), $G$ is topologically isomorphic 
to $\mathbb{Z}_p$, for some prime $p$, and, as before, 
(\ref{thedesiredguy}) holds. 
\end{list}
\par
In addition to the above cases, 
there is a family of 
examples in chromatic stable homotopy theory 
where iteration works in the desired way. 
Let $k$ be any finite field containing $\mathbb{F}_{p^n}$, where $p$ is any 
prime and $n$ is any positive integer. Given any height $n$ formal group 
law $\Gamma$ over $k$, let $E(k, \Gamma)$ be the Morava $E$-theory 
spectrum that satisfies 
\[\pi_\ast(E(k, \Gamma)) = W(k)\llbracket u_1, ..., u_{n-1}\rrbracket 
[u^{\pm 1}],\] 
where $W(k)$ denotes the Witt vectors, the 
degree of $u$ is $-2$, and 
the complete power series 
ring $W(k)\llbracket u_1, ..., u_{n-1} \rrbracket \cdot u^0$ 
is in degree zero (see \cite[Section 7]{Pgg/Hop0}). 
Also, let $G = S_n \rtimes \mathrm{Gal}(k/\mathbb{F}_p)$, 
the extended Morava stabilizer group: $G$ is a compact $p$-adic analytic group, 
and hence, has finite vcd, and, by \cite{Pgg/Hop0}, 
$G$ acts on $E(k, \Gamma)$. Then, 
by using \cite{DH, LHS, Rognes, joint} 
and the notion of total hyperfibrancy, 
\cite{iterated} shows that 
\[\bigl(E(k, \Gamma)^{hH}\bigr)^{hK/H} \simeq E(k,\Gamma)^{hK},\]
for all $H$ and $K$ defined as usual. Here, 
$E(k, \Gamma)$ is a continuous $G$-spectrum and not a discrete $G$-spectrum 
($\pi_0(E(k, \Gamma))$ is not a discrete $G$-module). If 
$F$ 
is any finite spectrum that is of type $n$, then, 
as in \cite[Corollary 6.5]{cts}, 
$E(k, \Gamma) \wedge F$ is a 
discrete $G$-spectrum and, again by the technique of \cite{iterated},
\[\bigl((E(k, \Gamma) \wedge F)^{hH}\bigr)^{hK/H} \simeq 
(E(k, \Gamma) \wedge F)^{hK}.\] 
We note that when $E(k, \Gamma) = E_n$, 
the Lubin-Tate spectrum, \cite[pg.~2883]{iterated} reviews 
some examples of how $\bigl(E(k, \Gamma)^{hH}\bigr)^{hK/H}$ 
plays a useful role in chromatic 
homotopy theory.
\par
Now we explain the approach of this paper to iterated homotopy 
fixed points in more detail. 
Let $G$ be an arbitrary profinite group and, as usual, let $X \in \cat$. Also, 
let $c(\cat)$ be the category of cosimplicial discrete $G$-spectra 
(that is, the category of cosimplicial objects in $\cat$). 
If $Z$ is a spectrum (which, in this paper, always 
means Bousfield-Friedlander spectrum), we let $Z_{k,l}$ denote the $l$-simplices 
of the $k$th simplicial set $Z_k$ of $Z$. Then 
$\mathrm{Map}_c(G,X)$ is the discrete 
$G$-spectrum that is defined by 
\[\mathrm{Map}_c(G,X)_{k,l} = \mathrm{Map}_c(G,X_{k,l}),\]
the set of 
continuous functions $G \rightarrow X_{k,l}$. 
The $G$-action on $\mathrm{Map}_c(G,X)$ 
is given by $(g \cdot f)(g') = f(g'g)$, for $g, g' \in G$ and $f \in \mathrm{Map}_c(G,X_{k,l}).$ 
As explained 
in \cite[Definition 7.1]{cts}, the functor 
\[\mathrm{Map}_c(G,-) \colon \mathrm{Spt}_G \rightarrow \mathrm{Spt}_G, \ \ \ 
X \mapsto \mathrm{Map}_c(G,X),\] 
forms a triple and there is 
a cosimplicial discrete $G$-spectrum $\mathrm{Map}_c(G^\bullet, X),$ 
where, for each $[n] \in \Delta$, the $n$-cosimplices satisfy the isomorphism
\[\mathrm{Map}_c(G^\bullet, X)^n \cong \mathrm{Map}_c(G^{n+1},X).\]  
\par
Following \cite[Remark 7.5]{cts}, let 
\[\widehat{X} = \colim_{N \vartriangleleft_o G} (X^N)_f,\] 
a filtered colimit over the open normal subgroups of $G$. Here, 
$(-)_f \: \spt \rightarrow \spt$ denotes a fibrant replacement functor 
for the model category 
$\spt$. Notice that $\widehat{X}$ is a discrete $G$-spectrum 
and a fibrant object in $\mathrm{Spt}$. 
Now let 
\[X_\delta = \holim_\Delta \mathrm{Map}_c(G^\bullet, \widehat{X}).\] 
(In the subscript of ``$X_\delta$," instead of ``$\Delta$," we use 
its less obtrusive and lowercase counterpart.) In the definition of 
$X_\delta$ (and everywhere else in this paper), the homotopy limit 
(as written in the definition) is 
formed in $\mathrm{Spt}$ (and not in $\mathrm{Spt}_G$). 
As explained in Section \ref{solution}, there is a natural 
$G$-equivariant map
\[\Psi \: X \overset{\simeq}{\longrightarrow} 
X_\delta\] that is a weak equivalence in $\spt$. Since 
$\widehat{X}$ is a fibrant spectrum, 
$\mathrm{Map}_c(G^\bullet, \widehat{X})^n$ is a fibrant spectrum 
(by applying \cite[Corollary 3.8, Lemma 3.10]{cts}), for each $[n] \in \Delta$. 
\par
The map $\Psi$ and the features of its target $X_\delta$ 
motivate the following definition. 
\begin{Def}\label{basic}
Let $X^\bullet$ be a cosimplicial discrete $G$-spectrum such that 
$X^n$ is fibrant in $\spt$, for each $[n] \in \Delta$. Then 
the $G$-spectrum \[\holim_\Delta X^\bullet\] is a {\em delta-discrete $G$-spectrum.} 
\end{Def}
\par
The weak equivalence 
$\Psi$ gives a natural way of associating a 
delta-discrete $G$-spectrum 
to every discrete $G$-spectrum. Also, we will see that 
$X_\delta$ plays a useful role in 
our work on iterated homotopy fixed points. 
\par
Now we give several more useful definitions, including 
a definition of weak equivalence 
for the setting of delta-discrete $G$-spectra.
\begin{Def}
Let $c(\mathrm{Spt})$ be the category of cosimplicial spectra. 
If $Z$ is a spectrum, let $\mathrm{cc}^\bullet(Z)$ denote 
the constant cosimplicial object (in $c(\mathrm{Spt})$) 
on $Z$. Also, given a discrete 
$G$-spectrum $X$, let
\[\mathrm{cc}_G(X) = \holim_\Delta \mathrm{cc}^\bullet(\widehat{X}).\] 
Notice that $\mathrm{cc}_G(X)$ is a delta-discrete $G$-spectrum.
\end{Def}
\begin{Def} 
If the morphism $f^\bullet \: X^\bullet \rightarrow Y^\bullet$ 
of cosimplicial discrete $G$-spectra is an {\em objectwise 
weak equivalence} (that is, $f^n \: X^n \rightarrow Y^n$ is a 
weak equivalence in $\cat$, for each $[n] \in \Delta$), 
such that the induced 
$G$-equivariant map \[f = \holim_\Delta f^\bullet \: 
\holim_\Delta X^\bullet \rightarrow \holim_\Delta Y^\bullet\] has source 
and target equal to delta-discrete $G$-spectra, then $f$ 
is a weak equivalence in 
$\spt$ (since all the $X^n$ and $Y^n$ are fibrant spectra). We call 
such a map $f$ a {\em weak equivalence of delta-discrete $G$-spectra}.
\end{Def}
\par
Now we define the key notion of homotopy 
fixed points for delta-discrete $G$-spectra. 
\par
\begin{Def}\label{cosimplicial}
Given a delta-discrete $G$-spectrum 
$\smash{\displaystyle{\holim_\Delta X^\bullet}}$, 
the {\em homotopy fixed point spectrum} 
$(\smash{\displaystyle{\holim_\Delta X^\bullet}})^{h_\delta G}$ is given 
by \[(\holim_\Delta X^\bullet)^{h_\delta G} = 
\holim_{[n] \in \Delta} \, (X^n)^{hG},\] 
where $(X^n)^{hG}$ is the homotopy fixed point spectrum of the discrete 
$G$-spectrum $X^n$. We 
use the notation ``\,$(-)^{h_\delta G}\,$" and the phrase 
``delta-discrete homotopy fixed points" to 
refer to the general operation of taking the homotopy fixed points of 
a delta-discrete $G$-spectrum. 
\end{Def}
\par
We show that the homotopy fixed points $(-)^{h_\delta G}$ 
for delta-discrete $G$-spectra 
have the following properties:
\begin{itemize}
\item[(a)]
when $G$ is a finite group, the homotopy fixed points of a delta-discrete $G$-spectrum 
agree with the usual notion of homotopy fixed points for a finite group;
\item[(b)]
for any profinite group $G$, 
the homotopy fixed points of delta-discrete $G$-spectra can be 
viewed as the total right derived functor of 
\[\lim_\Delta (-)^G \: \ccat \rightarrow \spt,\] where 
$\ccat$ has the injective model category structure (which is defined in 
Section \ref{properties}); 
\item[(c)]
given $X \in \cat$ and the delta-discrete $G$-spectrum $X_\delta$ associated to 
$X$, then, if $G$ has finite vcd, there is a weak equivalence 
\[X^{hL} \overset{\simeq}{\longrightarrow} 
(X_\delta)^{h_\delta L}\] in $\spt$, for every closed subgroup $L$ of $G$;
\item[(d)]\label{embedding}
more generally, if $G$ is any 
profinite group and $X \in \cat$, then there is a 
$G$-equivariant map 
$X \overset{\simeq}{\longrightarrow} \mathrm{cc}_G(X)$ that is a weak 
equivalence of spectra and a weak equivalence 
\[X^{hG} \overset{\simeq}{\longrightarrow} 
(\mathrm{cc}_G(X))^{h_\delta G}\mathrm{;}\]
\item[(e)]
if $f$ is a weak equivalence of delta-discrete $G$-spectra, then the 
induced map $f^{h_\delta G} = 
\holim_{[n] \in \Delta} (f^n)^{hG}$ is a 
weak equivalence in $\spt$ (since each 
$(f^n)^{hG}$ is a weak equivalence between 
fibrant spectra); and
\item[(f)]
if $G$ has finite vcd, with closed subgroups $H$ and $K$ such that 
$H \vartriangleleft K$, and if $Y$ is a delta-discrete $G$-spectrum, 
then $Y^{h_\delta H}$ is a delta-discrete $K/H$-spectrum, and 
\[(Y^{h_\delta H})^{h_\delta K/H} 
\simeq Y^{h_\delta K},\] so that $G$-homotopy fixed points 
of delta-discrete $G$-spectra have the transitivity 
property.
\end{itemize}
\par
In the above list of properties, 
(a) is Theorem \ref{inthefinitecase}, 
(b) is justified in Theorem \ref{derivedfunctor}, 
(c) is Lemma \ref{consistency}, (d) is verified right 
after the proof of Theorem \ref{inthefinitecase}, and (f) is 
obtained in 
Theorem \ref{transitivityfordeltadiscretes} (and the three 
paragraphs that precede it). 
\par
Notice that property (b) above 
shows that the {\em homotopy} fixed points 
of delta-discrete $G$-spectra are the total right derived functor of 
fixed points, in the appropriate sense. Also, (d) shows that, for any 
$G$ and any $X \in \cat$, the delta-discrete $G$-spectrum 
$\mathrm{cc}_G(X)$ 
is equivalent to $X$ and their homotopy fixed points are the same. Thus, 
the setting of delta-discrete $G$-spectra and the 
homotopy fixed points $(-)^{h_\delta G}$ 
includes and generalizes the category of discrete $G$-spectra and 
the homotopy fixed points $(-)^{hG}$. 
Therefore, properties (a) -- (f) show that the homotopy 
fixed points of a delta-discrete $G$-spectrum 
are a good notion that does indeed deserve to be called 
``homotopy fixed points."
\par
Now suppose that $G$ has finite vcd and, as usual, let $X \in \cat$. 
As above, let $H$ and $K$ be closed subgroups of $G$, with $H$ normal 
in $K$. In Lemma \ref{stepone}, we show that, by making a 
canonical identification, $X^{hH}$ is a 
delta-discrete $K/H$-spectrum. Thus, it is 
natural to form the iterated homotopy fixed point spectrum 
$(X^{hH})^{h_\delta K/H}$ and, by Theorem \ref{steptwo}, 
there is a weak equivalence 
\[X^{hK} \overset{\simeq}{\longrightarrow} (X^{hH})^{h_\delta K/H}.\] 
In this way, we show that when $G$ has finite vcd, 
by using delta-discrete $K/H$-spectra, there is a sense in 
which the iterated homotopy 
fixed point spectrum can always be formed and the transitivity 
property holds.   
\par
More generally, in 
Theorem \ref{deltaiteration}, we show that for 
{\em any} $G$, 
though it is not known if $X^{hH}$ always has a $K/H$-action (as 
mentioned earlier), 
there is an equivalence 
\begin{equation}\label{gladtohave}\zig
\bigl((X_\delta)^{h_\delta H}\bigr)^{h_\delta K/H} 
\simeq (X_\delta)^{h_\delta K}.
\end{equation}
Thus, 
for any $G$, by using $(-)_\delta$ and 
$(-)^{h_\delta H}$, the delta-discrete 
homotopy fixed points for discrete $G$-spectra are -- 
{\em in general} -- 
transitive. Also, there is a map 
\[\rho(X)_{_{\negthinspace H}} \: X^{hH} \rightarrow (X_\delta)^{h_\delta H}
\] that relates 
the ``discrete homotopy fixed points" $X^{hH}$ to 
$(X_\delta)^{h_\delta H}$, 
and this map is a weak equivalence whenever 
the map $\colim_{U \vartriangleleft_o H} \Psi^U$ 
is a weak equivalence (see Theorem \ref{rho} 
and the discussion that precedes it).
\par
For any $G$ and $X$, since the $G$-equivariant map 
$\Psi \: X \rightarrow X_\delta$ is a weak equivalence, 
$X$ can always be regarded as a delta-discrete 
$G$-spectrum, and thus, $X$ can be thought of as having 
two types of homotopy fixed points, $X^{hH}$ and 
$(X_\delta)^{h_\delta H}$, and, though its 
discrete homotopy fixed points $X^{hH}$ 
are not known to always be well-behaved with respect to iteration, 
there is a reasonable 
alternative, the delta-discrete homotopy fixed points 
$(X_\delta)^{h_\delta H}$, 
which, thanks to (\ref{gladtohave}), are always well-behaved. 
\par
As the reader might have noticed, given the work of \cite{iterated} (as 
discussed earlier) 
and that of this paper, when $G$ has finite vcd and $X$ is a 
hyperfibrant discrete $G$-spectrum, there are two different 
ways to define an iterated homotopy fixed point spectrum: 
as $(X^{hH})^{hK/H}$ and as $(X^{hH})^{h_\delta K/H}$. Though 
we are not able to show that these two objects are always equivalent, 
in Theorem \ref{comparison}, we show that if the canonical map 
$(X_{fK})^H \rightarrow ((X_{fK})_{fH})^H$ is a weak equivalence, 
then there is a weak equivalence 
$(X^{hH})^{hK/H} \overset{\simeq}{\longrightarrow} 
(X^{hH})^{h_\delta K/H}.$
\par
In the last section of this paper, Section \ref{modelcategories}, we show 
in two different, but interrelated ways that, for arbitrary $G$, 
the delta-discrete homotopy fixed point spectrum is always 
equivalent to a discrete homotopy fixed point spectrum. In 
each case, the equivalence is induced by a map 
between a 
discrete $G$-spectrum and a delta-discrete $G$-spectrum that 
need 
{\em not} be a weak equivalence. In Remark \ref{forgetfulfunctor}, 
we note several consequences of this observation for the 
categories $c(\cat)$ and $c(\mathrm{Spt})$, when each is 
equipped with the injective 
model structure.
\vspace{.05in}
\par
\noindent
\textbf{Acknowledgements.} With regard to the topic of homotopy limits, which 
appear frequently in this paper, I thank Kathryn Hess and 
Michael Shulman for helpful 
discussions and Chris Douglas for several useful comments. 
\section{Delta-discrete $G$-spectra and iterated homotopy fixed points}\label{solution}
Let $G$ be an arbitrary profinite group and $X$ a discrete 
$G$-spectrum. In this section, we review (from \cite{fibrantmodel}) the construction of a 
natural $G$-equivariant map 
$\Psi \: X \rightarrow X_\delta$ that is a weak equivalence in 
$\spt$, giving a 
natural way of associating 
a delta-discrete $G$-spectrum 
to each object of $\cat$. Also, we show that when $G$ has finite vcd, 
then by using the framework of delta-discrete $K/H$-spectra,  
there is a sense in which 
homotopy fixed points of discrete $G$-spectra satisfy transitivity.
\par 
Now let $G$ be any profinite group. There is a $G$-equivariant 
monomorphism $i \: X \rightarrow \mathrm{Map}_c(G,X)$ that is defined, on 
the level of sets, by $i(x)(g) = g \cdot x$, where $x \in X_{k,l}$ and 
$g \in G$. Notice that $i$ induces a $G$-equivariant map 
\[\widetilde{i}_X \: \holim_\Delta \mathrm{cc}^\bullet(X) 
\rightarrow \holim_\Delta \mathrm{Map}_c(G^\bullet, X).\]  
\par
There is a natural $G$-equivariant map 
\[\psi \: X \cong \colim_{N \vartriangleleft_o G} X^N \rightarrow 
\colim_{N \vartriangleleft_o G} (X^N)_f = \widehat{X}\] to the 
discrete $G$-spectrum $\widehat{X}$. 
We would like to know that $\psi$ is a weak equivalence in $\cat$; however, 
the validity of this is not obvious, since $\{X^N\}_{N \vartriangleleft_o G}$ is not known to 
be a diagram of fibrant spectra, and hence, we cannot use the fact that filtered colimits 
preserve weak equivalences between fibrant spectra. Nevertheless, the following 
lemma shows that it is still the case that $\psi$ is a weak equivalence in $\cat$ 
(the author stated this without proof in 
\cite[Remark 7.5]{cts} and \cite[Definition 3.4]{fibrantmodel}).
\begin{Lem}\label{trivialandhard}
If $X$ is a discrete $G$-spectrum, then $\psi \: X \rightarrow \widehat{X}$ 
is a weak equivalence in $\cat$.
\end{Lem}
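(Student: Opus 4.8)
The plan is to reduce the lemma to the single assertion that \emph{filtered colimits in $\mathrm{Spt}$ preserve weak equivalences}, and then to prove that assertion directly from the construction of stable homotopy groups, rather than through the (here inapplicable) fact about weak equivalences between fibrant spectra.

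First I would make precise the sense in which $\psi$ is a filtered colimit of weak equivalences. Since $X$ is a discrete $G$-spectrum, each $G$-set $X_{k,l}$ is the union of its fixed sets under the open normal subgroups of $G$ --- every element has open stabilizer, and every open subgroup of a profinite group contains an open normal one --- so $X_{k,l} = \colim_{N \vartriangleleft_o G} (X_{k,l})^N$; carrying this out levelwise and in each simplicial degree gives a natural isomorphism $X \cong \colim_{N \vartriangleleft_o G} X^N$ of discrete $G$-spectra. The indexing category here, the open normal subgroups of $G$ ordered by reverse inclusion, is filtered, since $N_1 \cap N_2$ refines both $N_1$ and $N_2$. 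By construction $\psi$ is the map $\colim_N\bigl(X^N \to (X^N)_f\bigr)$ induced on these filtered colimits by the fibrant-replacement maps of $\mathrm{Spt}$, each of which is a weak equivalence (indeed a trivial cofibration). Thus $\psi$ is a filtered colimit of weak equivalences in $\mathrm{Spt}$, and it remains to see that such a colimit is again a weak equivalence.

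For this I would use that the stable homotopy groups of a Bousfield-Friedlander spectrum $Y$ are computed by a formula of the shape $\pi_i(Y) = \colim_k \pi_{i+k}\bigl(\mathrm{Ex}^\infty(Y_k)\bigr)$ (with the structure maps of $Y$ supplying the transition morphisms), so that a map in $\mathrm{Spt}$ is a weak equivalence precisely when it induces isomorphisms on every $\pi_i$. Now $\mathrm{Ex}^\infty$ commutes with filtered colimits of simplicial sets because each $\mathrm{sd}\,\Delta^n$ is a finite simplicial set; a filtered colimit of Kan complexes is Kan and the homotopy-group functors $\pi_m$ commute with such colimits because the spheres are finite; and the outer colimit over $k$ commutes with filtered colimits trivially. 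Interchanging the two filtered colimits yields a natural isomorphism $\pi_i\bigl(\colim_j Y^{(j)}\bigr) \cong \colim_j \pi_i\bigl(Y^{(j)}\bigr)$, and a filtered colimit of isomorphisms of abelian groups is an isomorphism; hence filtered colimits in $\mathrm{Spt}$ preserve $\pi_*$-isomorphisms. Applying this to $\psi$ shows that $\psi$ is a weak equivalence in $\mathrm{Spt}$, and therefore in $\cat$, since weak equivalences in $\cat$ are by definition the underlying weak equivalences in $\mathrm{Spt}$.

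The one genuine obstacle is the point flagged just before the statement: one must not appeal to ``filtered colimits preserve weak equivalences between fibrant spectra,'' since the $X^N$ are not known to be $\Omega$-spectra and, for a general spectrum, $\pi_*$ is not the naive homotopy of the zeroth space. So the argument has to be routed through the explicit colimit description of stable homotopy groups and the elementary --- but unavoidable --- verification that $\mathrm{Ex}^\infty$, the homotopy-group functors, and the sequential colimit over $k$ are each compatible with filtered colimits; once that is in hand, the identification $X \cong \colim_N X^N$ and the recognition of $\psi$ as a filtered colimit of trivial cofibrations are routine.
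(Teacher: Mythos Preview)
Your argument is correct. You reduce to the statement that filtered colimits in $\mathrm{Spt}$ preserve stable weak equivalences and then verify this directly from the colimit formula $\pi_i(Y)\cong\colim_k \pi_{i+k}(\mathrm{Ex}^\infty Y_k)$, using compactness of $\mathrm{sd}^n\Delta^m$, of horns, and of spheres; each step is sound, and you correctly flag that the shortcut ``filtered colimits of weak equivalences between fibrant spectra'' is unavailable here.

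The paper takes a different route: it packages $X$ as the presheaf of spectra $\mathrm{Hom}_G(-,X)$ on the site $G\text{-}\mathbf{Sets}_{df}$, observes that the fibrant-replacement map of presheaves is a sectionwise (hence local) stable equivalence, and then invokes Jardine's theory to conclude it is a \emph{stalkwise} weak equivalence, the relevant stalk being exactly $\colim_{N\vartriangleleft_o G}\widehat{\psi}(G/N)$. The content behind the implication ``local stable equivalence $\Rightarrow$ stalkwise weak equivalence'' is precisely that stable homotopy groups commute with the filtered colimit defining a stalk, so the two proofs rest on the same underlying fact. Your approach is more self-contained and elementary, avoiding the presheaf formalism entirely; the paper's approach is shorter if one is willing to quote the Jardine machinery and has the virtue of situating the lemma inside the sheaf-theoretic framework already used elsewhere in the paper.
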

\begin{proof} 
Since $X$ is a discrete $G$-spectrum, 
\[\mathrm{Hom}_G(-,X) \:  (G \negthinspace - \negthinspace \mathbf{Sets}_{df})^\mathrm{op} 
\rightarrow \mathrm{Spt}, \ \ \ C \mapsto \mathrm{Hom}_G(C,X)\] is a presheaf of 
spectra on the site 
$G \negthinspace - \negthinspace \mathbf{Sets}_{df}$ of finite discrete $G$-sets 
(for more detail, we refer the reader to \cite[Section 3]{cts} and 
\cite[Sections 2.3, 6.2]{Jardine}). Here, the $l$-simplices of the $k$th simplicial 
set $\mathrm{Hom}_G(C,X)_k$ are given by $\mathrm{Hom}_G(C,X_{k,l})$. Also, the 
composition
\[(-)_f \circ \mathrm{Hom}_G(-,X) \: 
(G \negthinspace - \negthinspace \mathbf{Sets}_{df})^\mathrm{op} 
\rightarrow \mathrm{Spt}, \ \ \ C \mapsto (\mathrm{Hom}_G(C,X))_f\] is a presheaf of 
spectra, and there is a map of presheaves 
\[\widehat{\psi} \: \mathrm{Hom}_G(-,X) \rightarrow (-)_f \circ 
\mathrm{Hom}_G(-,X)\] that comes 
from the natural transformation $\mathrm{id}_\spt \rightarrow (-)_f$. 
\par
Since $\widehat{\psi}(C) \:  \mathrm{Hom}_G(C,X) \rightarrow (\mathrm{Hom}_G(C,X))_f$ 
is a weak equivalence for every 
$C \in G \negthinspace - \negthinspace \mathbf{Sets}_{df}$, the map 
$\pi_t(\widehat{\psi})$ 
of presheaves is an isomorphism 
for every integer $t$. 
Therefore, for every integer $t$, 
$\widetilde{\pi}_t(\widehat{\psi})$, 
the map of sheaves associated to 
$\pi_t(\widehat{\psi})$, 
is an isomorphism, so that the map 
$\widehat{\psi}$ is a local stable equivalence, and 
hence, a stalkwise weak 
equivalence. Thus, the map 
$\colim_{N \vartriangleleft_o G} \widehat{\psi}(G/N)$ is a weak equivalence 
of spectra, and therefore, the map
\[\colim_{N \vartriangleleft_o G} X^N 
\cong \colim_{N \vartriangleleft_o G} \mathrm{Hom}_G(G/N, X)
\overset{\simeq}{\rightarrow} 
\colim_{N \vartriangleleft_o G} (\mathrm{Hom}_G(G/N, X))_f \cong 
\colim_{N \vartriangleleft_o G} (X^N)_f\] is a 
weak equivalence, giving the desired conclusion.
\end{proof}
\begin{Def}[{\cite[pg. 145]{fibrantmodel}}]
Given any profinite group $G$ and any $X \in \cat$, 
the composition 
\[X \overset{\psi}{\rightarrow} \widehat{X} \overset{\cong}{\rightarrow} 
\lim_\Delta \mathrm{cc}^\bullet(\widehat{X}) 
\overset{\xi}{\rightarrow} 
\holim_\Delta \mathrm{cc}^\bullet(\widehat{X}) 
\overset{{\widetilde{i}}_{\widehat{X}}}{\longrightarrow} 
\holim_\Delta \mathrm{Map}_c(G^\bullet, \widehat{X}) = X_\delta\] 
of natural maps, where the map $\xi$ is the usual one 
(for example, see \cite[Example 18.3.8, \negthinspace (2)]{Hirschhorn}), 
defines the natural $G$-equivariant map 
\[ \Psi \: X \rightarrow X_\delta\] of spectra.  
\end{Def}
\par
The next result was obtained in \cite[pg. 145]{fibrantmodel}; 
however, we give 
the proof below for completeness and because it is a key result.
\begin{Lem}
If $G$ is any profinite group and $X \in \cat$, then the natural 
map 
$\Psi \: X \overset{\simeq}{\longrightarrow} 
X_\delta$ is a weak equivalence of spectra. 
\end{Lem}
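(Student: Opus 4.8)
The plan is to use the factorization of $\Psi$ furnished by its very definition,
\[
X \overset{\psi}{\longrightarrow} \widehat{X} \overset{\cong}{\longrightarrow}
\lim_\Delta \mathrm{cc}^\bullet(\widehat{X}) \overset{\xi}{\longrightarrow}
\holim_\Delta \mathrm{cc}^\bullet(\widehat{X}) \overset{\widetilde{i}_{\widehat{X}}}{\longrightarrow}
\holim_\Delta \mathrm{Map}_c(G^\bullet, \widehat{X}) = X_\delta,
\]
and to check that each of the four maps is a weak equivalence of spectra; I will use freely, as recorded above, that $\widehat{X}$ is a fibrant object of $\spt$ and that each $\mathrm{Map}_c(G^\bullet, \widehat{X})^n \cong \mathrm{Map}_c(G^{n+1}, \widehat{X})$ is a fibrant spectrum. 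The map $\psi$ is a weak equivalence in $\cat$, hence in $\spt$, by Lemma \ref{trivialandhard}. The isomorphism $\widehat{X} \cong \lim_\Delta \mathrm{cc}^\bullet(\widehat{X})$ is the tautological identification of an object with the limit of the constant cosimplicial object on it. Finally, since $\widehat{X}$ is fibrant, $\xi$ is a weak equivalence: the homotopy limit over $\Delta$ of a constant, objectwise fibrant cosimplicial spectrum is weakly equivalent, via exactly this canonical comparison map, to the underlying spectrum (a standard property of homotopy limits; cf.\ \cite{Hirschhorn}).

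The remaining, and only non-formal, step is to show that $\widetilde{i}_{\widehat{X}}$ is a weak equivalence. The map $\widetilde{i}_{\widehat{X}}$ is $\holim_\Delta$ applied to the map of cosimplicial spectra $\mathrm{cc}^\bullet(\widehat{X}) \to \mathrm{Map}_c(G^\bullet, \widehat{X})$ induced by the unit $i$ of the triple $\mathrm{Map}_c(G,-)$, i.e.\ to the coaugmentation of the cobar resolution $\mathrm{Map}_c(G^\bullet, \widehat{X})$, viewed as a map out of the constant cosimplicial object. The key point is that, after forgetting the $G$-action, the coaugmented cosimplicial spectrum $\widehat{X} \to \mathrm{Map}_c(G^\bullet, \widehat{X})$ is \emph{split}: it admits an extra codegeneracy
\[
s^{-1} \: \mathrm{Map}_c(G^{n+1}, \widehat{X}) \to \mathrm{Map}_c(G^{n}, \widehat{X}), \qquad
s^{-1}(f)(g_1, \dots, g_n) = f(e, g_1, \dots, g_n),
\]
given in each simplicial degree by evaluation of the outermost $G$-coordinate at the identity $e \in G$ (with $\mathrm{Map}_c(G^0, \widehat{X}) = \widehat{X}$). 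This is the spectrum-level avatar of the classical contracting homotopy witnessing that $\mathrm{Map}_c(G^{\bullet+1}, -)$ is an acyclic coinduced resolution computing the continuous cohomology of the profinite group $G$; that $s^{-1}$ is not $G$-equivariant is harmless, since the homotopy limit is formed in $\spt$. One checks that $s^{-1}$ satisfies the cosimplicial identities of an extra codegeneracy for the structure on $\mathrm{Map}_c(G^\bullet, \widehat{X})$ coming from the triple $\mathrm{Map}_c(G,-)$ --- for instance it retracts the coaugmentation, as $s^{-1}(i(x)) = i(x)(e) = e \cdot x = x$, and, using $(g \cdot f)(g') = f(g'g)$, it intertwines the higher cofaces and codegeneracies with those one degree lower.

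Granting that $\widehat{X} \to \mathrm{Map}_c(G^\bullet, \widehat{X})$ is split, the induced map of cosimplicial spectra $\mathrm{cc}^\bullet(\widehat{X}) \to \mathrm{Map}_c(G^\bullet, \widehat{X})$ is a cosimplicial homotopy equivalence; applying $\holim_\Delta$, which carries cosimplicial homotopy equivalences to homotopy equivalences, and using that $\widehat{X}$ and all $\mathrm{Map}_c(G^{n+1}, \widehat{X})$ are fibrant, shows that $\widetilde{i}_{\widehat{X}}$ is a weak equivalence. Composing the four weak equivalences yields that $\Psi \: X \to X_\delta$ is a weak equivalence of spectra. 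The single genuine obstacle here is the verification of the extra codegeneracy: one must match the cosimplicial structure produced by the triple $\mathrm{Map}_c(G,-)$ (whose unit $i(x)(g) = g \cdot x$ is \emph{not} the inclusion of constant functions) with the standard coinduced resolution, so that ``plug in $e$'' is manifestly a contraction; the behaviour of $\psi$, of the identification, and of $\xi$ are all formal.
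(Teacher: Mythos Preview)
Your proof is correct, but it proceeds differently from the paper's. The paper argues via the Bousfield--Kan homotopy spectral sequence
\[
E_2^{s,t} = \pi^s\bigl(\pi_t(\mathrm{Map}_c(G^\bullet, \widehat{X}))\bigr) \Rightarrow \pi_{t-s}(X_\delta),
\]
noting that the associated cochain complex is exactly the standard resolution $0 \to \pi_t(\widehat{X}) \to \mathrm{Map}_c(G^\ast, \pi_t(\widehat{X}))$, which is exact; hence $E_2^{s,t}$ vanishes for $s>0$, the spectral sequence collapses, and $\pi_\ast(X_\delta) \cong \pi_\ast(\widehat{X}) \cong \pi_\ast(X)$. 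You instead lift this to the spectrum level: the extra codegeneracy $s^{-1}$ you write down is precisely the contracting homotopy that, after applying $\pi_t$, produces the exactness the paper invokes. Your route avoids the spectral sequence machinery and yields an actual (non-equivariant) homotopy equivalence $\widehat{X} \simeq \holim_\Delta \mathrm{Map}_c(G^\bullet, \widehat{X})$ rather than an isomorphism on homotopy groups; the paper's route is shorter to state since it offloads the verification of the splitting identities to the familiar exactness of the coinduced resolution in continuous cohomology. Your caveat at the end is well placed: the one point requiring care is that the cofaces of the monadic cosimplicial object involve the twisted unit $i(x)(g) = g\cdot x$, so one should check (as you began to) that ``evaluate the first coordinate at $e$'' really satisfies $s^{-1}d^0 = \mathrm{id}$ and $s^{-1}d^{i+1} = d^i s^{-1}$ for this particular cosimplicial structure; it does, but it is not entirely automatic.
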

\begin{proof}
Following \cite[pg. 145]{fibrantmodel}, 
there is a homotopy spectral sequence 
\[E_2^{s,t} = \pi^s(\pi_t(\mathrm{Map}_c(G^\bullet, \widehat{X}))) 
\Rightarrow 
\pi_{t-s}(X_\delta).\] Let $\pi_t(\mathrm{Map}_c(G^\ast, \widehat{X}))$ be 
the canonical cochain complex associated to 
the cosimplicial abelian group 
$\pi_t(\mathrm{Map}_c(G^\bullet, \widehat{X}))$. 
As in \cite[proof of Theorem 7.4]{cts}, there is an exact sequence 
\[0 \rightarrow \pi_t(\widehat{X}) \rightarrow 
\mathrm{Map}_c(G^\ast, \pi_t(\widehat{X})) \cong 
\pi_t(\mathrm{Map}_c(G^\ast, \widehat{X})),\] so that 
$E_2^{0,t} \cong \pi_t(\widehat{X}) 
\cong \pi_t(X),$ where the last isomorphism is 
by Lemma \ref{trivialandhard}, 
and $E_2^{s,t} = 0$, when $s > 0.$ Thus, the above spectral sequence 
collapses, giving the desired result. 
\end{proof}
\par
Now we consider the iterated homotopy fixed 
points of discrete $G$-spectra 
by using the setting of delta-discrete $K/H$-spectra. 
Here, as in the Introduction, $H$ and $K$ are closed subgroups of 
$G$, with $H \vartriangleleft K$. We begin with a definition and some 
preliminary observations. 
\begin{Def}
If $Y^\bullet$ is a 
cosimplicial discrete $G$-spectrum, such that $Y^n$ 
is fibrant in $\cat$, for each 
$[n] \in \Delta$, 
then we call $Y^\bullet$ a {\em cosimplicial fibrant discrete $G$-spectrum}.
\end{Def}
\begin{Lem}[{\cite[proof of Theorem 3.5]{fibrantmodel}}]\label{fibrancy} 
If $G$ is any profinite group and $X \in \cat$, 
then the cosimplicial discrete $G$-spectrum 
$\mathrm{Map}_c(G^\bullet, \widehat{X})$ is a 
cosimplicial fibrant discrete $L$-spectrum, 
for every closed subgroup $L$ of $G$.
\end{Lem}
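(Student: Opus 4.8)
The plan is to reduce the $L$-fibrancy of $\mathrm{Map}_c(G^{n+1},\widehat{X})$ to the already-invoked fact (from \cite[Corollary 3.8, Lemma 3.10]{cts}) that $\mathrm{Map}_c(L,W)$ is fibrant in $\mathrm{Spt}_L$ whenever $W$ is a fibrant object of $\spt$. Concretely, I would exhibit each cosimplicial degree of $\mathrm{Map}_c(G^\bullet,\widehat{X})$, regarded as a discrete $L$-spectrum, in the form $\mathrm{Map}_c(L,W)$ for a spectrum $W$ that is fibrant in $\spt$.

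First I would recall the identification $\mathrm{Map}_c(G^\bullet,\widehat{X})^n\cong\mathrm{Map}_c(G^{n+1},\widehat{X})$ coming from the triple structure of \cite[Definition 7.1]{cts}, under which $G$ acts by right translation on a single coordinate of $G^{n+1}$; say $(g\cdot f)(g_0,g_1,\dots,g_n)=f(g_0g,g_1,\dots,g_n)$. Restricting the $G$-action to the closed subgroup $L$, I would use the standard fact that the projection $G\to G/L$ admits a continuous section $\sigma\colon G/L\to G$ (available since $G$ is profinite and $L$ is closed). This yields an $L$-equivariant homeomorphism $G\cong (G/L)\times L$, $g\mapsto(\bar g,\sigma(\bar g)^{-1}g)$, where $L$ acts only on the right-hand factor by right translation. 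Hence $G^{n+1}$ is isomorphic, as a right $L$-space, to $(G/L)\times G^{n}\times L$ with $L$ acting solely on the last factor, and therefore
\[
\mathrm{Map}_c(G^{n+1},\widehat{X})\;\cong\;\mathrm{Map}_c\bigl(L,\ \mathrm{Map}_c((G/L)\times G^{n},\widehat{X})\bigr)
\]
as discrete $L$-spectra, the $L$-action on the right being the usual action on the outer $\mathrm{Map}_c(L,-)$, using the exponential law $\mathrm{Map}_c(A\times L,-)\cong\mathrm{Map}_c(L,\mathrm{Map}_c(A,-))$.

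Next I would check that $W:=\mathrm{Map}_c((G/L)\times G^{n},\widehat{X})$ is fibrant in $\spt$. Since $(G/L)\times G^{n}$ is a profinite set, $W=\colim_i \widehat{X}^{S_i}$, a filtered colimit over the finite discrete quotients $S_i$ of $(G/L)\times G^{n}$, where each $\widehat{X}^{S_i}$ is a finite product of copies of $\widehat{X}$. Finite products of fibrant spectra are fibrant, and filtered colimits of fibrant spectra in $\spt$ are again fibrant (the same fact that makes $\widehat{X}=\colim_{N\vartriangleleft_o G}(X^N)_f$ a fibrant object of $\spt$); since $\widehat{X}$ is fibrant in $\spt$, so is $W$. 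Applying \cite[Corollary 3.8, Lemma 3.10]{cts} to the profinite group $L$ and the fibrant spectrum $W$ then shows that $\mathrm{Map}_c(L,W)$ — equivalently, $\mathrm{Map}_c(G^{n+1},\widehat{X})$ with its restricted $L$-action — is fibrant in $\mathrm{Spt}_L$, for every $[n]\in\Delta$, which is the claim.

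The step I expect to require the most care is the equivariance bookkeeping in the second paragraph: one must pin down exactly which coordinate of $G^{n+1}$ carries the $G$-action induced by the cosimplicial structure, and then verify that transporting along $\sigma$ turns the restricted $L$-action into precisely the standard $\mathrm{Map}_c(L,-)$-action, so that the cited fibrancy result applies without modification. The remaining ingredients — the continuous section of $G\to G/L$, the exponential law for $\mathrm{Map}_c$, and the closure of fibrant spectra under finite products and filtered colimits — are routine.
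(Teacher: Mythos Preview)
Your argument is correct and is the standard one. Note, however, that the paper itself does not supply a proof of this lemma: it simply records the statement and cites \cite[proof of Theorem~3.5]{fibrantmodel} for the justification. The argument you give --- choose a continuous section of $G\to G/L$ to obtain a right-$L$-equivariant homeomorphism $G\cong (G/L)\times L$, peel off the $L$-factor via the exponential law to write $\mathrm{Map}_c(G^{n+1},\widehat{X})\cong \mathrm{Map}_c(L,W)$ in $\mathrm{Spt}_L$, observe that $W=\mathrm{Map}_c((G/L)\times G^n,\widehat{X})$ is a filtered colimit of finite products of the fibrant spectrum $\widehat{X}$ and hence fibrant in $\spt$, and then invoke that $\mathrm{Map}_c(L,-)$ is a right Quillen functor --- is exactly the mechanism used in the cited reference. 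Your cautionary remark about the equivariance bookkeeping is apt but the check goes through as you outline: right $L$-translation fixes the left coset $gL$, so the second coordinate $\sigma(\bar g)^{-1}g$ transforms by right multiplication by $l$, giving precisely the standard $\mathrm{Map}_c(L,-)$-action.
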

\par
For the rest of this section, 
we assume that $G$ has finite vcd and, as usual, 
$X$ is a discrete $G$-spectrum. 
As in Lemma \ref{fibrancy}, let 
$L$ be any closed subgroup of $G$. 
Then, by 
\cite[Remark 7.13]{cts} and 
\cite[Definition 5.1, Theorem 5.2]{fibrantmodel} 
(the latter citation 
sets the former on a stronger footing), 
there is an identification
\begin{equation}\label{identify}\zig
X^{hL} = \holim_\Delta \mathrm{Map}_c(G^\bullet, \widehat{X})^L.
\end{equation}
(Notice that, under this identification, $X^{hL} \cong (X_\delta)^L$.)
\begin{Lem}\label{consistency}
For each $L$, there is a weak equivalence $X^{hL} 
\overset{\simeq}{\longrightarrow} (X_\delta)^{h_\delta L}$.
\end{Lem}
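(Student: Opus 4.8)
The plan is to pass to the cosimplicial level and exploit that $\mathrm{Map}_c(G^\bullet,\widehat{X})$ is levelwise fibrant not merely as a cosimplicial spectrum but as a cosimplicial discrete $L$-spectrum, so that taking honest $L$-fixed points in each cosimplicial degree already computes the degreewise homotopy fixed points. First I would pin down the right-hand side: by Lemma \ref{fibrancy} the object $\mathrm{Map}_c(G^\bullet,\widehat{X})$ is a cosimplicial fibrant discrete $L$-spectrum, and each $\mathrm{Map}_c(G^{n+1},\widehat{X})$ is a fibrant spectrum (as recorded just before Definition \ref{basic}); restricting the $G$-action to $L$, this exhibits $X_\delta=\holim_\Delta\mathrm{Map}_c(G^\bullet,\widehat{X})$ as a delta-discrete $L$-spectrum, so by Definition \ref{cosimplicial}
\[(X_\delta)^{h_\delta L}=\holim_{[n]\in\Delta}(\mathrm{Map}_c(G^{n+1},\widehat{X}))^{hL}.\]
On the other hand the identification (\ref{identify}) --- the only place the finite vcd hypothesis on $G$ enters --- gives $X^{hL}=\holim_\Delta\mathrm{Map}_c(G^\bullet,\widehat{X})^L$. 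So the task reduces to constructing a levelwise weak equivalence of cosimplicial spectra from $\mathrm{Map}_c(G^\bullet,\widehat{X})^L$ to the cosimplicial spectrum $[n]\mapsto(\mathrm{Map}_c(G^{n+1},\widehat{X}))^{hL}$, both of which are levelwise fibrant, and then applying $\holim_\Delta$.

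For the map, I would use the natural trivial cofibration $Z\to Z_{fL}$ in $\mathrm{Spt}_L$: it is functorial in $Z$, so applying $(-)^L$ gives a natural map $Z^L\to(Z_{fL})^L=Z^{hL}$, and letting $Z$ range over the (restricted) cosimplicial discrete $L$-spectrum $\mathrm{Map}_c(G^\bullet,\widehat{X})$ produces the required map of cosimplicial spectra. To see it is a levelwise weak equivalence between levelwise fibrant objects I would invoke that $(-)^L\colon\mathrm{Spt}_L\to\spt$ is a right Quillen functor: it is right adjoint to the functor equipping a spectrum with the trivial $L$-action, which preserves cofibrations and trivial cofibrations because cofibrations and weak equivalences in $\mathrm{Spt}_L$ are, by definition, those maps that are such in $\spt$ (see \cite[Section 3]{cts}). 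Hence $(-)^L$ preserves fibrant objects and weak equivalences between them; since $\mathrm{Map}_c(G^{n+1},\widehat{X})$ is fibrant in $\mathrm{Spt}_L$ by Lemma \ref{fibrancy}, the spectrum $\mathrm{Map}_c(G^{n+1},\widehat{X})^L$ is fibrant, $(\mathrm{Map}_c(G^{n+1},\widehat{X}))^{hL}$ is fibrant as a homotopy fixed point spectrum, and the degree-$n$ map --- being $(-)^L$ applied to the weak equivalence $\mathrm{Map}_c(G^{n+1},\widehat{X})\to(\mathrm{Map}_c(G^{n+1},\widehat{X}))_{fL}$ of fibrant objects of $\mathrm{Spt}_L$ --- is a weak equivalence.

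Finally, $\holim_\Delta$ carries a levelwise weak equivalence between levelwise fibrant cosimplicial spectra to a weak equivalence in $\spt$ (the fact already used when defining weak equivalences of delta-discrete $G$-spectra), so the induced map
\[X^{hL}=\holim_\Delta\mathrm{Map}_c(G^\bullet,\widehat{X})^L\overset{\simeq}{\longrightarrow}\holim_{[n]\in\Delta}(\mathrm{Map}_c(G^{n+1},\widehat{X}))^{hL}=(X_\delta)^{h_\delta L}\]
is the desired weak equivalence. I do not expect a genuine obstacle: the argument is essentially the single observation that the homotopy fixed points of a discrete $L$-spectrum that happens to be fibrant in $\mathrm{Spt}_L$ coincide with its naive fixed points, applied one cosimplicial degree at a time, with Lemma \ref{fibrancy} supplying exactly the degreewise fibrancy required. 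The only steps needing a little care are the appeal to $(-)^L$ being right Quillen (equivalently, to its preserving weak equivalences between objects fibrant in $\mathrm{Spt}_L$) and the bookkeeping that the degreewise maps really do assemble into a single map of cosimplicial spectra.
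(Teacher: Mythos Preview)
Your proposal is correct and follows essentially the same route as the paper's proof: both use Lemma~\ref{fibrancy} to see that each $\mathrm{Map}_c(G^\bullet,\widehat{X})^n$ is fibrant in $\mathrm{Spt}_L$, deduce that applying $(-)^L$ to the fibrant replacement map yields a levelwise weak equivalence between levelwise fibrant spectra, and then take $\holim_\Delta$. Your version is simply more explicit about why $(-)^L$ preserves weak equivalences between fibrant objects (via the right Quillen functor argument), whereas the paper states this step more tersely.
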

\begin{proof}
By Lemma \ref{fibrancy}, the fibrant replacement map
\[\mathrm{Map}_c(G^\bullet, \widehat{X})^n \overset{\simeq}{\longrightarrow} 
(\mathrm{Map}_c(G^\bullet, \widehat{X})^n)_{fL}\] 
is a weak equivalence between fibrant objects in $\mathrm{Spt}_L$, 
for each $[n] \in \Delta$, 
so that 
\[(\mathrm{Map}_c(G^\bullet, \widehat{X})^n)^L
\overset{\simeq}{\longrightarrow} 
((\mathrm{Map}_c(G^\bullet, \widehat{X})^n)_{fL})^L = 
(\mathrm{Map}_c(G^\bullet, \widehat{X})^n)^{hL}\] 
is a weak equivalence between fibrant objects in $\spt$. 
Thus, there is a weak equivalence 
\[X^{hL} = \holim_\Delta 
\mathrm{Map}_c(G^\bullet, \widehat{X})^L 
\overset{\simeq}{\longrightarrow} 
(\holim_\Delta \mathrm{Map}_c(G^\bullet, \widehat{X}))^{h_\delta L} = 
(X_\delta)^{h_\delta L}.
\]
\end{proof}
\par
It will be useful to recall (see \cite[Lemma 4.6]{iterated}) 
that the functor 
\[(-)^H \: \mathrm{Spt}_K \rightarrow \mathrm{Spt}_{K/H}, \ \ \ 
Y \mapsto Y^H\] preserves fibrant objects.
\par
Now we are ready for the task at hand. Since 
$\mathrm{Map}_c(G^\bullet, \widehat{X})^H$ is a cosimplicial 
discrete $K/H$-spectrum that is fibrant in $\mathrm{Spt}$ in 
each codegree, we can immediately conclude the following.
\begin{Lem}\label{stepone}
The spectrum $X^{hH} = 
\smash{\displaystyle{\holim_{\Delta}}} 
\, \mathrm{Map}_c(G^\bullet, \widehat{X})^H$
is a delta-discrete $K/H$-spectrum.
\end{Lem}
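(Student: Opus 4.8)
The plan is to verify that the object $X^{hH} = \holim_{\Delta} \mathrm{Map}_c(G^\bullet, \widehat{X})^H$ satisfies Definition \ref{basic} with the role of ``$G$" played by $K/H$. Two things must be checked: first, that $\mathrm{Map}_c(G^\bullet, \widehat{X})^H$ is a cosimplicial object in $\mathrm{Spt}_{K/H}$, i.e.\ each codegree $\mathrm{Map}_c(G^\bullet, \widehat{X})^n{}^H = \mathrm{Map}_c(G^{n+1}, \widehat{X})^H$ is a discrete $K/H$-spectrum, with the cosimplicial structure maps being $K/H$-equivariant; and second, that each such codegree is fibrant as an object of $\mathrm{Spt}$. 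The homotopy limit defining $X^{hH}$ is formed in $\mathrm{Spt}$, exactly as in Definition \ref{basic}, so once these two points are in hand the conclusion is immediate.

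For the first point, I would argue as follows. By Lemma \ref{fibrancy}, $\mathrm{Map}_c(G^\bullet, \widehat{X})$ is in particular a cosimplicial discrete $K$-spectrum (take $L = K$), so each codegree $\mathrm{Map}_c(G^{n+1}, \widehat{X})$ lies in $\mathrm{Spt}_K$. Applying the functor $(-)^H \colon \mathrm{Spt}_K \to \mathrm{Spt}_{K/H}$ codegreewise, and using that this functor is indeed well-defined (as recalled in the excerpt, and established in \cite{iterated, fibrantmodel}), produces a cosimplicial discrete $K/H$-spectrum $\mathrm{Map}_c(G^\bullet, \widehat{X})^H$: the cosimplicial identities are preserved because $(-)^H$ is a functor and the coface/codegeneracy maps of $\mathrm{Map}_c(G^\bullet, \widehat{X})$ are $K$-equivariant, hence induce $K/H$-equivariant maps on $H$-fixed points. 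This is precisely the ``canonical identification" alluded to in the statement, and it is already implicit in identity (\ref{identify}).

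For the second point — fibrancy in $\mathrm{Spt}$ of each codegree — I would invoke Lemma \ref{fibrancy} again, this time with $L = H$: it tells us that $\mathrm{Map}_c(G^\bullet, \widehat{X})$ is a cosimplicial fibrant discrete $H$-spectrum, so each $\mathrm{Map}_c(G^{n+1}, \widehat{X})$ is fibrant in $\mathrm{Spt}_H$. Now I would combine this with the fact, recalled just before Lemma \ref{stepone}, that $(-)^H \colon \mathrm{Spt}_K \to \mathrm{Spt}_{K/H}$ preserves fibrant objects: applying $(-)^H$ to a fibrant object of $\mathrm{Spt}_K$ that is also fibrant in $\mathrm{Spt}_H$ yields an object of $\mathrm{Spt}_{K/H}$ whose underlying spectrum is $(\mathrm{Map}_c(G^{n+1}, \widehat{X}))^{hH}$ up to the identification above, and since homotopy fixed points of discrete $H$-spectra are fibrant spectra, each codegree is fibrant in $\mathrm{Spt}$. (Alternatively, and perhaps more cleanly, one observes directly from \cite[Corollary 3.8, Lemma 3.10]{cts} applied to the fibrant spectrum $\widehat{X}$ that $\mathrm{Map}_c(G^{n+1}, \widehat{X})^H = \mathrm{Map}_c(G^{n+1}/H\text{-orbits}, \widehat{X})$-type expressions are fibrant in $\mathrm{Spt}$, but routing through Lemma \ref{fibrancy} with $L=H$ keeps the argument self-contained.)

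The only real subtlety — and the step I expect to require the most care — is making sure the cosimplicial $K/H$-structure on $\mathrm{Map}_c(G^\bullet, \widehat{X})^H$ is the \emph{same} as the one coming from the identification (\ref{identify}) of $X^{hH}$, rather than some a priori different structure; but this is a matter of unwinding definitions rather than a genuine obstacle, since both structures are induced by applying $(-)^H$ codegreewise to the canonical cosimplicial discrete $K$-spectrum $\mathrm{Map}_c(G^\bullet, \widehat{X})$. With these points settled, $X^{hH}$ is by definition $\holim_\Delta$ of a cosimplicial discrete $K/H$-spectrum that is codegreewise fibrant in $\mathrm{Spt}$, which is exactly what Definition \ref{basic} requires of a delta-discrete $K/H$-spectrum, so the lemma follows.
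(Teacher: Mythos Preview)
Your proposal is correct and follows essentially the same approach as the paper: the paper states the lemma as an immediate consequence of the sentence preceding it, namely that $\mathrm{Map}_c(G^\bullet,\widehat{X})^H$ is a cosimplicial discrete $K/H$-spectrum that is fibrant in $\mathrm{Spt}$ in each codegree, which is exactly the two points you verify. One small streamlining: your second point is more direct if you stay with $L=K$ throughout rather than detouring through $L=H$ --- since each $\mathrm{Map}_c(G^{n+1},\widehat{X})$ is fibrant in $\mathrm{Spt}_K$ (Lemma~\ref{fibrancy}) and $(-)^H\colon\mathrm{Spt}_K\to\mathrm{Spt}_{K/H}$ preserves fibrant objects, each codegree is fibrant in $\mathrm{Spt}_{K/H}$ and hence in $\mathrm{Spt}$; this is the route implicit in the paper (compare the first sentence of the proof of Theorem~\ref{steptwo}).
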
 
\par
Lemma \ref{stepone} implies that 
\begin{align*}
(X^{hH})^{h_\delta K/H} 
& = 
\holim_{[n] \in \Delta} \bigl((\mathrm{Map}_c(G^\bullet, 
\widehat{X})^n)^H\bigr)^{hK/H}.
\end{align*}
\begin{Thm}\label{steptwo}
There is a weak equivalence $X^{hK} \overset{\simeq}{\longrightarrow} 
(X^{hH})^{h_\delta K/H}.$
\end{Thm}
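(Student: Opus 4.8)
The plan is to reduce the comparison to a codegreewise statement about the relevant cosimplicial spectra and then pass to the homotopy limit. First I would invoke the identification \eqref{identify} with $L = K$ to write $X^{hK} = \holim_\Delta \mathrm{Map}_c(G^\bullet, \widehat{X})^K$, and recall from Lemma \ref{stepone} and the display immediately following it that $(X^{hH})^{h_\delta K/H} = \holim_{[n] \in \Delta} \bigl((\mathrm{Map}_c(G^\bullet, \widehat{X})^n)^H\bigr)^{hK/H}$. So it suffices to produce, for each $[n] \in \Delta$ and naturally in $[n]$, a weak equivalence
\[
(\mathrm{Map}_c(G^\bullet, \widehat{X})^n)^K \overset{\simeq}{\longrightarrow} \bigl((\mathrm{Map}_c(G^\bullet, \widehat{X})^n)^H\bigr)^{hK/H},
\]
and then to observe that both cosimplicial objects are fibrant spectra in each codegree — the source because $\mathrm{Map}_c(G^\bullet, \widehat{X})^n$ is fibrant in $\mathrm{Spt}_K$ by Lemma \ref{fibrancy} and $(-)^K$ carries fibrant discrete $K$-spectra to fibrant spectra, the target because homotopy fixed point spectra are always fibrant — so that the objectwise weak equivalence induces a weak equivalence on $\holim_\Delta$.

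To build the codegreewise map, I would use the elementary identity $Y^K = (Y^H)^{K/H}$, valid for any discrete $K$-spectrum $Y$, with $Y = \mathrm{Map}_c(G^\bullet, \widehat{X})^n$; this rewrites the source as $\bigl((\mathrm{Map}_c(G^\bullet, \widehat{X})^n)^H\bigr)^{K/H}$. Next, since $\mathrm{Map}_c(G^\bullet, \widehat{X})^n$ is fibrant in $\mathrm{Spt}_K$ and the functor $(-)^H \colon \mathrm{Spt}_K \to \mathrm{Spt}_{K/H}$ preserves fibrant objects (recalled just before Lemma \ref{stepone}), the discrete $K/H$-spectrum $(\mathrm{Map}_c(G^\bullet, \widehat{X})^n)^H$ is fibrant. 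Hence the fibrant replacement map $(\mathrm{Map}_c(G^\bullet, \widehat{X})^n)^H \to \bigl((\mathrm{Map}_c(G^\bullet, \widehat{X})^n)^H\bigr)_{f(K/H)}$ is a weak equivalence between fibrant objects in $\mathrm{Spt}_{K/H}$, and applying the fixed-point functor $(-)^{K/H}$ — which is right Quillen, hence preserves weak equivalences between fibrant objects — yields the weak equivalence $\bigl((\mathrm{Map}_c(G^\bullet, \widehat{X})^n)^H\bigr)^{K/H} \overset{\simeq}{\longrightarrow} \bigl((\mathrm{Map}_c(G^\bullet, \widehat{X})^n)^H\bigr)^{hK/H}$. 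This is the composite of the identification above with $(-)^{K/H}$ applied to the fibrant replacement, so it is natural in $[n]$ by functoriality of fibrant replacement and of the fixed-point functors; the maps therefore assemble into a map of cosimplicial spectra, and taking $\holim_\Delta$ gives the asserted weak equivalence $X^{hK} \overset{\simeq}{\longrightarrow} (X^{hH})^{h_\delta K/H}$.

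The fibrancy bookkeeping is routine, but the step I expect to matter most is checking that $(\mathrm{Map}_c(G^\bullet, \widehat{X})^n)^H$ is fibrant in $\mathrm{Spt}_{K/H}$, not merely in $\mathrm{Spt}$: it is exactly this $K/H$-fibrancy that makes honest $K/H$-fixed points compute the homotopy fixed points $(-)^{hK/H}$ in each codegree, and hence makes the codegreewise comparison a weak equivalence rather than just a map. The remaining inputs — the identity $Y^K = (Y^H)^{K/H}$, Ken Brown's lemma for the right Quillen functor $(-)^{K/H}$, and the fact that $\holim_\Delta$ preserves objectwise weak equivalences between codegreewise-fibrant cosimplicial spectra — are all standard.
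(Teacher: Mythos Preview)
Your proof is correct and follows essentially the same route as the paper: both arguments hinge on the fact that $(\mathrm{Map}_c(G^\bullet,\widehat{X})^n)^H$ is fibrant in $\mathrm{Spt}_{K/H}$ (via Lemma~\ref{fibrancy} and the preservation of fibrant objects by $(-)^H$), so that applying $(-)^{K/H}$ to the fibrant replacement gives a codegreewise weak equivalence between fibrant spectra, and then one passes to $\holim_\Delta$. Your write-up is slightly more explicit about the bookkeeping (the identity $Y^K=(Y^H)^{K/H}$, Ken Brown, codegreewise fibrancy of source and target), but the content is the same.
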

\begin{proof}
Since $\mathrm{Map}_c(G^\bullet, \widehat{X})$ is a cosimplicial fibrant 
discrete $K$-spectrum, 
the diagram 
$\mathrm{Map}_c(G^\bullet, \widehat{X})^H$ is a 
cosimplicial fibrant discrete 
$K/H$-spectrum. Hence, each fibrant replacement map 
\[(\mathrm{Map}_c(G^\bullet, \widehat{X})^n)^H 
\overset{\simeq}{\longrightarrow}
((\mathrm{Map}_c(G^\bullet, \widehat{X})^n)^H)_{fK/H}\] is a weak equivalence between 
fibrant objects in $\mathrm{Spt}_{K/H}$, so that 
the induced map
\begin{equation}\label{moreprogress}\zig
\holim_{[n] \in \Delta} ((\mathrm{Map}_c(G^\bullet, \widehat{X})^n)^H)^{K/H} \overset{\simeq}{\longrightarrow}
\holim_{[n] \in \Delta} 
\bigl(((\mathrm{Map}_c(G^\bullet, \widehat{X})^n)^H)_{fK/H}\bigr)^{K/H}
\end{equation} is a weak equivalence. The weak equivalence in (\ref{moreprogress}) 
is exactly the weak equivalence
\begin{align*}
X^{hK} = 
\holim_{[n] \in \Delta} (\mathrm{Map}_c(G^\bullet, \widehat{X})^n)^K \overset{\simeq}{\rightarrow} 
\holim_{[n] \in \Delta} 
\bigl((\mathrm{Map}_c(G^\bullet,\widehat{X})^n)^H\bigr)^{hK/H} 
= (X^{hH})^{h_\delta K/H}.
\end{align*}  
\end{proof}
\par
Lemma \ref{stepone} and Theorem \ref{steptwo} show that, 
when $G$ has 
finite vcd, 
the iterated homotopy fixed point spectrum $(X^{hH})^{h_\delta K/H}$ 
is always defined and it is just $X^{hK}$.
\section{The homotopy fixed points of delta-discrete 
$G$-spectra\\as a total right derived 
functor}\label{properties}
\par
Let $G$ be any profinite group. 
In this section, we show that $(-)^{h_\delta G}$, the operation 
of taking the homotopy 
fixed points of a delta-discrete $G$-spectrum, can be viewed as 
the total right derived functor of 
\[\lim_\Delta (-)^G \: c(\mathrm{Spt}_G) \rightarrow 
\mathrm{Spt}, \ \ \ X^\bullet \mapsto \lim_{[n] \in \Delta} (X^n)^G,\] where 
$c(\mathrm{Spt}_G)$ 
has the injective model category structure (defined below). We 
will obtain this result as a special case of a more general result. 
Thus, we let $\mathcal{C}$ denote any small category and we use 
``$\,Z^\typical \,$" and ``$\,X^\typical \,$" 
to denote 
$\mathcal{C}$-shaped diagrams in $\mathrm{Spt}$ and $\cat$, 
respectively, since 
we are especially interested in the case when $\mathcal{C} = \Delta$.
\par
Recall that $\mathrm{Spt}$ is a combinatorial model category 
(for the definition of this notion,
we refer the reader to 
the helpful expositions in \cite[Section 2]{combinatorial} and 
\cite[Section A.2.6 (and Definition A.1.1.2)]{luriebook}); 
this well-known 
fact is stated explicitly in \cite[pg. 459]{rosickybrown}.
Therefore, \cite[Proposition A.2.8.2]{luriebook} implies that 
$\mathrm{Spt}^\mathcal{C}$, the category of functors $\mathcal{C} 
\rightarrow \mathrm{Spt}$, has an injective model category structure: 
more precisely, $\mathrm{Spt}^\mathcal{C}$ has a model structure in 
which 
a map $f^\typical \: 
Z^\typical \rightarrow W^\typical$ in $\mathrm{Spt}^\mathcal{C}$ 
is a weak equivalence (cofibration) 
if and only if, for each $C \in \mathcal{C},$ 
the map $f^{\superc} \: Z^{\superc} \rightarrow W^{\superc}$ is a 
weak equivalence (cofibration) in $\mathrm{Spt}$.
\par
Similar comments apply to $\mathrm{Spt}_G$. For example, the 
proof of \cite[Theorem 2.2.1]{joint} applies 
\cite[Definition 3.3]{hoveygeneral} to obtain the model structure 
on $\cat$, and thus, $\cat$ is a cellular model category. Hence, 
$\cat$ is cofibrantly generated. Also, the category 
$\cat$ is equivalent to the 
category of sheaves of spectra on the site 
$G \negthinspace - \negthinspace \mathbf{Sets}_{df}$ 
(see \cite[Section 3]{cts}), and thus, $\cat$ is a 
locally presentable category, since 
standard arguments show that such a category of sheaves is 
locally presentable. (For example, see the general comment
about such categories in \cite[pg. 2]{hagtwo}. The basic ideas are 
contained in the proof of the fact that a Grothendieck topos is 
locally presentable (see, for example, \cite[Proposition 3.4.16]{Borceux}), 
and hence, the category of sheaves of sets on the aforementioned 
site is locally presentable.) 
\par
From the above considerations, we conclude 
that $\cat$ is a combinatorial model category. 
Therefore, as before, 
\cite[Proposition A.2.8.2]{luriebook} 
implies that the category 
$(\mathrm{Spt}_G)^\mathcal{C}$ 
of $\mathcal{C}$-shaped diagrams in 
$\cat$ has an injective model structure in which a map $h^\typical$ is 
a weak equivalence (cofibration) if and only if 
each map $h^{\superc}$ is a weak equivalence (cofibration) in 
$\mathrm{Spt}_G$. 
\par
It will be useful to note that, by \cite[Remark A.2.8.5]{luriebook}, if 
$f^\typical$ is a fibration in $\mathrm{Spt}^\mathcal{C}$, then 
$f^{\superc}$ is a fibration in $\mathrm{Spt}$, 
for every $C \in \mathcal{C}$. Similarly, if $h^\typical$ is a 
fibration in $(\mathrm{Spt}_G)^\mathcal{C}$, then 
each map $h^{\superc}$ is a fibration in $\cat$.
\par
The functor 
\[\lim_\mathcal{C} (-)^G \: 
(\mathrm{Spt}_G)^\mathcal{C} 
\rightarrow \mathrm{Spt}, 
\ \ \ X^\typical \mapsto 
\lim_{\scriptscriptstyle{C} \in \mathcal{C}} 
(X^{\scriptscriptstyle{C}})^G\] is right adjoint to the functor 
$\underline{\mathrm{t}} \: \mathrm{Spt} 
\rightarrow (\mathrm{Spt}_G)^\mathcal{C}$ that sends 
an arbitrary spectrum $Z$ to 
the constant $\mathcal{C}$-shaped 
diagram $\underline{\mathrm{t}}(Z)$ 
on the discrete $G$-spectrum $t(Z)$, where 
(as in \cite[Corollary 3.9]{cts}) 
\[t \: \mathrm{Spt} \rightarrow \cat, \ \ \ Z \mapsto t(Z) =Z\] is the 
functor that equips $Z$ with the trivial $G$-action. 
If $f$ is a weak equivalence (cofibration) in $\mathrm{Spt}$, then $t(f)$ 
is a weak 
equivalence (cofibration) in $\mathrm{Spt}_G$, and hence, 
$\underline{\mathrm{t}}(f)$ is a weak equivalence (cofibration) in 
$(\cat)^\mathcal{C}$. This observation immediately gives the 
following result.
\begin{Lem}\label{quillen}
The functors $(\underline{\mathrm{t}}, \lim_\mathcal{C} (-)^G)$ 
are a Quillen pair for 
$(\mathrm{Spt}, (\cat)^\mathcal{C})$. 
\end{Lem}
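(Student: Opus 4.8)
The plan is to verify the two defining conditions of a Quillen pair: that $\underline{\mathrm{t}}$ preserves cofibrations and trivial cofibrations (equivalently, that $\lim_\mathcal{C}(-)^G$ preserves fibrations and trivial fibrations), and that $(\underline{\mathrm{t}}, \lim_\mathcal{C}(-)^G)$ is an adjoint pair. The adjointness is already asserted in the text: $\lim_{C \in \mathcal{C}}(-)^G$ is right adjoint to $\underline{\mathrm{t}}$, since $(-)^G \: \cat \rightarrow \spt$ is right adjoint to $t \: \spt \rightarrow \cat$ (the trivial-action functor, as in \cite[Corollary 3.9]{cts}), and passing to $\mathcal{C}$-diagrams, the constant-diagram functor $\underline{\mathrm{t}}$ is left adjoint to $\lim_\mathcal{C}$; composing these two adjunctions gives the desired adjoint pair. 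So the only real content is the Quillen condition.

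First I would recall the definitions of the two injective model structures in play, which have just been set up in the excerpt: a map $h^\typical$ in $(\cat)^\mathcal{C}$ is a weak equivalence (resp.\ cofibration) exactly when each $h^{\superc}$ is one in $\cat$, and likewise weak equivalences (resp.\ cofibrations) in $\mathrm{Spt}^\mathcal{C}$ are detected objectwise in $\mathrm{Spt}$. Then, for a weak equivalence (resp.\ cofibration) $f \: Z \rightarrow W$ in $\mathrm{Spt}$, the map $t(f) \: t(Z) \rightarrow t(W)$ is a weak equivalence (resp.\ cofibration) in $\cat$ — this is immediate because weak equivalences and cofibrations in $\cat$ are created in $\mathrm{Spt}$ by forgetting the $G$-action, and $t$ is a section of that forgetful functor on underlying objects. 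Consequently $\underline{\mathrm{t}}(f)$ has the property that each of its components $\underline{\mathrm{t}}(f)^{\superc} = t(f)$ is a weak equivalence (resp.\ cofibration) in $\cat$, so $\underline{\mathrm{t}}(f)$ is a weak equivalence (resp.\ cofibration) in the injective model structure on $(\cat)^\mathcal{C}$. In particular $\underline{\mathrm{t}}$ carries cofibrations to cofibrations and trivial cofibrations to trivial cofibrations, which is precisely the left-Quillen condition.

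There is essentially no obstacle here; the statement is a formal consequence of the fact that cofibrations and weak equivalences in $\cat$ (and hence in $(\cat)^\mathcal{C}$) are detected in $\mathrm{Spt}$, together with $t$ respecting these two classes. The only point requiring the slightest care is that $(\cat)^\mathcal{C}$ genuinely carries an injective model structure with objectwise cofibrations and weak equivalences — but this was just established in the excerpt via the combinatoriality of $\cat$ and \cite[Proposition A.2.8.2]{luriebook}, so I would simply cite that. Thus the lemma follows as claimed.
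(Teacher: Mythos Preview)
Your proposal is correct and follows exactly the same approach as the paper: the paper simply observes, in the sentence immediately preceding the lemma, that if $f$ is a weak equivalence (cofibration) in $\mathrm{Spt}$ then $t(f)$ is one in $\cat$ and hence $\underline{\mathrm{t}}(f)$ is one in $(\cat)^\mathcal{C}$, and states that this observation immediately gives the result. Your write-up just unpacks that sentence in more detail.
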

\par
We let \[(-)_\fibrant \: (\cat)^\mathcal{C} \rightarrow (\cat)^\mathcal{C}, 
\ \ \ X^\typical \mapsto (X^\typical)_\fibrant\] 
denote a fibrant replacement functor, such 
that there is a morphism 
$X^\typical 
\rightarrow (X^\typical)_\fibrant$ in $(\cat)^\mathcal{C}$ that 
is a natural trivial cofibration, with 
$(X^\typical)_\fibrant$ fibrant, in $(\cat)^\mathcal{C}$. 
Then Lemma \ref{quillen} 
implies the existence of the total right derived functor 
\[\mathbf{R}(\lim_\mathcal{C} (-)^G) \: 
\mathrm{Ho}\bigl((\cat)^\mathcal{C}\bigr) \rightarrow 
\mathrm{Ho}(\mathrm{Spt}), \ \ \ X^\typical
\mapsto \lim_\mathcal{C} 
((X^\typical)_{\scriptstyle{\mathpzc{Fib}}})^G.\]   
\par
Now we prove the key result that will allow us to relate 
Definition \ref{cosimplicial} 
to the total right derived functor $\mathbf{R}(\lim_\Delta (-)^G)$.

\begin{Thm}\label{derived}
Given $X^\typical$ in $(\cat)^\mathcal{C}$, the canonical map 
\[\bigl(\mathbf{R}(\lim_\mathcal{C} (-)^G)\bigr)(X^\typical) = 
\lim_\mathcal{C} ((X^\typical)_\fibrant)^G 
\overset{\simeq}{\longrightarrow} 
\holim_\mathcal{C} ((X^\typical)_\fibrant)^G\] 
is a weak equivalence of spectra.
\end{Thm}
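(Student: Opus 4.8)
The plan is to realize $\lim_\mathcal{C}((X^\typical)_\fibrant)^G$ as the homotopy limit over $\mathcal{C}$ of an \emph{injectively} fibrant $\mathcal{C}$-diagram of spectra, and then to match that homotopy limit with the Bousfield--Kan construction $\holim_\mathcal{C}$ used throughout the paper. First I would factor the Quillen pair $(\underline{\mathrm{t}},\,\lim_\mathcal{C}(-)^G)$ of Lemma \ref{quillen} through $(\spt)^\mathcal{C}$ with its injective model structure: the functor $\underline{\mathrm{t}}$ is the composite of the constant-diagram functor $\spt\to(\spt)^\mathcal{C}$ followed by the levelwise application of $t$, giving $(\spt)^\mathcal{C}\to(\cat)^\mathcal{C}$; correspondingly, $\lim_\mathcal{C}(-)^G$ is the composite of the levelwise fixed-point functor $(\cat)^\mathcal{C}\to(\spt)^\mathcal{C}$ followed by $\lim_\mathcal{C}\colon(\spt)^\mathcal{C}\to\spt$. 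Since cofibrations and weak equivalences in the injective model structures on $(\spt)^\mathcal{C}$ and $(\cat)^\mathcal{C}$ are detected objectwise, and since $t$ carries cofibrations, weak equivalences, and hence trivial cofibrations of $\spt$ to the corresponding maps of $\cat$, both the levelwise-$t$ functor $(\spt)^\mathcal{C}\to(\cat)^\mathcal{C}$ and the constant-diagram functor $\spt\to(\spt)^\mathcal{C}$ are left Quillen; dually, the levelwise fixed-point functor $(\cat)^\mathcal{C}\to(\spt)^\mathcal{C}$ and $\lim_\mathcal{C}\colon(\spt)^\mathcal{C}\to\spt$ are both right Quillen.

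Second, since $(X^\typical)_\fibrant$ is fibrant in the injective model structure on $(\cat)^\mathcal{C}$, the right Quillen functor ``levelwise $(-)^G$'' carries it to a diagram $((X^\typical)_\fibrant)^G$ that is fibrant in the injective model structure on $(\spt)^\mathcal{C}$; in particular $((X^\typical)_\fibrant)^G$ is objectwise fibrant. Because $\lim_\mathcal{C}\colon(\spt)^\mathcal{C}\to\spt$ is right Quillen and its argument $((X^\typical)_\fibrant)^G$ is already fibrant, the spectrum $\lim_\mathcal{C}((X^\typical)_\fibrant)^G$ is a model for the total right derived functor $\mathbf{R}(\lim_\mathcal{C})$ evaluated at $((X^\typical)_\fibrant)^G$; that is, $\lim_\mathcal{C}((X^\typical)_\fibrant)^G$ computes the homotopy limit over $\mathcal{C}$ of the objectwise-fibrant $\mathcal{C}$-diagram of spectra $((X^\typical)_\fibrant)^G$.

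Third, I would identify this homotopy limit with $\holim_\mathcal{C}((X^\typical)_\fibrant)^G$, compatibly with the canonical map appearing in the statement. For an objectwise-fibrant $\mathcal{C}$-diagram $Z^\typical$ of spectra, the Bousfield--Kan homotopy limit $\holim_\mathcal{C}Z^\typical$ and the total right derived limit $\mathbf{R}(\lim_\mathcal{C})(Z^\typical)$ each represent the homotopy limit of $Z^\typical$ in the combinatorial simplicial model category $\spt$, with the canonical natural map $\lim_\mathcal{C}Z^\typical\to\holim_\mathcal{C}Z^\typical$ realizing the comparison; hence, when $Z^\typical$ is injectively fibrant, that map is a weak equivalence. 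Taking $Z^\typical=((X^\typical)_\fibrant)^G$ then yields the theorem.

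The first two steps are essentially formal, since everything in sight is detected objectwise. The step that must be handled with care is the comparison in the third paragraph, between the Bousfield--Kan $\holim_\mathcal{C}$ and the total right derived functor of $\lim_\mathcal{C}$ taken with respect to the injective model structure of \cite[Proposition A.2.8.2]{luriebook}. One concrete route, using that $\spt$ is simplicial: write $\holim_\mathcal{C}Z^\typical=\MAP_\mathcal{C}\bigl(N(-\backslash\mathcal{C}),\,Z^\typical\bigr)$ in terms of the $\mathcal{C}$-diagram $c\mapsto N(c\backslash\mathcal{C})$ of nerves of under-categories; each $N(c\backslash\mathcal{C})$ is contractible (it has an initial object), so the induced map $\lim_\mathcal{C}Z^\typical=\MAP_\mathcal{C}(\ast,Z^\typical)\to\MAP_\mathcal{C}(N(-\backslash\mathcal{C}),Z^\typical)=\holim_\mathcal{C}Z^\typical$ is the canonical one, and it is a weak equivalence whenever $Z^\typical$ is injectively fibrant, by the two-variable Quillen adjunction pairing the projective model structure on diagrams of simplicial sets against the injective model structure on $(\spt)^\mathcal{C}$. (Alternatively, one simply invokes the standard coincidence of these two homotopy-limit constructions on objectwise-fibrant diagrams.)
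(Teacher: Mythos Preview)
Your proposal is correct, and its first two steps coincide with the paper's proof: both factor the adjunction through $(\spt)^\mathcal{C}$ to see that the levelwise fixed-point functor is right Quillen for the injective structures, and both conclude that $((X^\typical)_\fibrant)^G$ is injectively fibrant in $(\spt)^\mathcal{C}$, reducing the theorem to the claim that $\lim_\mathcal{C} Z^\typical \to \holim_\mathcal{C} Z^\typical$ is a weak equivalence whenever $Z^\typical$ is injectively fibrant in $(\spt)^\mathcal{C}$.

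The difference is in how that last claim is established. The paper argues concretely: it checks that source and target are fibrant spectra, reduces to the maps $\lambda_k$ on simplicial sets at each level $k$, identifies the injective model structure on $(\spt)^\mathcal{C}$ with Jardine's local injective structure on presheaves of spectra over $\mathcal{C}^{\mathrm{op}}$ with the chaotic topology so as to deduce that each $(Z^\typical)_k$ is fibrant in $\mathcal{S}^\mathcal{C}$, and then invokes \cite[pg.~407, Lemma~2.11]{GJ}. Your route is the more abstract one via the simplicial enrichment, realizing the canonical map as $\hom^\mathcal{C}(\ast,Z^\typical)\to\hom^\mathcal{C}(N(-\backslash\mathcal{C}),Z^\typical)$ and appealing to a Quillen bifunctor. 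This is exactly the approach the paper itself describes in Remark~\ref{holimremark} as one that ``should follow in an essentially formal way'' but whose details the author had not fully written out; your argument supplies them and is arguably cleaner.

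One small correction: the pairing you want puts the \emph{injective} model structure on $\mathcal{S}^\mathcal{C}$, not the projective one. With the projective structure the constant diagram $\ast$ is generally not cofibrant, so Ken Brown does not directly apply to the map $N(-\backslash\mathcal{C})\to\ast$. With the injective structure, the tensor $(\mathcal{S}^\mathcal{C})_{\mathrm{inj}}\times\spt\to((\spt)^\mathcal{C})_{\mathrm{inj}}$ is a left Quillen bifunctor (the pushout-product axiom is checked objectwise using SM7), so for injectively fibrant $Z^\typical$ the functor $\hom^\mathcal{C}(-,Z^\typical)$ sends all objectwise weak equivalences of diagrams of simplicial sets to weak equivalences, which is precisely what you need.
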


\begin{proof}
Let $t^\mathcal{C} \: 
\mathrm{Spt}^\mathcal{C} \rightarrow (\cat)^\mathcal{C}$ 
be the functor that sends $Z^\typical$ 
to $Z^\typical$, where each $Z^{\superc}$ is regarded as having the trivial $G$-action. Then 
$t^\mathcal{C}$ preserves weak equivalences and cofibrations, 
so that the 
right adjoint of $t^\mathcal{C}$, the fixed points functor 
$(-)^G \: (\cat)^\mathcal{C} \rightarrow \mathrm{Spt}^\mathcal{C}$, 
preserves 
fibrant objects. Thus, the diagram $((X^\typical)_\fibrant)^G$ is fibrant in 
$\mathrm{Spt}^\mathcal{C}$. 

\par
Let $Z^\typical$ be a fibrant object 
in $\mathrm{Spt}^\mathcal{C}$. Then, to 
complete the proof, it suffices to show that 
the following canonical map 
is a weak equivalence: \[\lambda \: \lim_\mathcal{C} Z^\typical 
\rightarrow \holim_\mathcal{C} Z^\typical.\] (Though 
this assertion seems to be well-known, for the sake of 
completeness, we justify it below. Also, see Remark 
\ref{holimremark}.) Since the 
functor $\mathrm{Spt} \rightarrow \mathrm{Spt}^\mathcal{C}$ 
that sends a spectrum $W$ 
to the constant $\mathcal{C}$-shaped 
diagram on $W$ preserves weak equivalences and cofibrations, 
its right adjoint, the functor 
$\lim_\mathcal{C} (-) \: \mathrm{Spt}^\mathcal{C} 
\rightarrow \mathrm{Spt}$, preserves 
fibrant objects. Thus, $\lim_\mathcal{C} Z^\typical$, the source of 
$\lambda$, is a fibrant spectrum. 
\par
Since 
$Z^\typical$ is fibrant in $\mathrm{Spt}^\mathcal{C}$, 
$Z^{\superc}$ is fibrant in $\mathrm{Spt}$, for each $C \in \mathcal{C}$, 
and hence, 
$\holim_\mathcal{C} Z^\typical$ is a 
fibrant spectrum. Therefore, 
since the source and target of $\lambda$ 
are fibrant spectra, to verify that $\lambda$ 
is a weak equivalence, we only have to show that 
the map 
\[\lambda_k \: 
\lim_\mathcal{C} (Z^\typical)_k = 
(\lim_\mathcal{C} Z^\typical)_k \rightarrow 
(\holim_\mathcal{C} Z^\typical)_k = \holim_\mathcal{C} (Z^\typical)_k\]
is a weak equivalence of simplicial sets, for each $k \geq 0$, where 
the limit and homotopy limit of $(Z^\typical)_k$ 
in the source and target, respectively, of $\lambda_k$ are formed 
in $\mathcal{S}$, the category of simplicial sets. To do this, we equip 
$\mathcal{S}^\mathcal{C}$, the 
category of $\mathcal{C}$-shaped diagrams of 
simplicial sets, with an injective model structure, so that 
a morphism 
$f^\typical \: K^\typical \rightarrow L^\typical$ in 
$\mathcal{S}^\mathcal{C}$ 
is a weak equivalence 
(cofibration) if and only if $f^{\superc} \: K^{\superc} \rightarrow 
L^{\superc}$ is a weak equivalence (cofibration) 
of simplicial sets, for each $C \in \mathcal{C}$ (the injective model 
structure on $\mathcal{S}^\mathcal{C}$ exists, 
for example, by \cite[pg. 403, Proposition 2.4]{GJ}). 
\par
Notice that the category $\mathrm{Spt}^\mathcal{C}$ is equal to 
the category $\mathrm{Spt}^{(\mathcal{C}^\mathrm{op})^\mathrm{op}}$ 
of presheaves of spectra on $\mathcal{C}^\mathrm{op}$, where 
$\mathcal{C}^\mathrm{op}$ is regarded as a site by equipping it 
with the chaotic topology. Then, by \cite[Remark 2.36]{Jardine}, 
the injective model structure on $\mathrm{Spt}^\mathcal{C}$ is 
exactly the local injective model structure (of \cite{Jardinecanada}) on 
the category 
$\mathrm{Spt}^{(\mathcal{C}^\mathrm{op})^\mathrm{op}}$ 
of presheaves 
of spectra on the site $\mathcal{C}^\mathrm{op}$. Thus, 
by \cite[Remark 2.35]{Jardine}, $(Z^\typical)_k$ is a globally 
fibrant simplicial presheaf of sets on the site 
$\mathcal{C}^\mathrm{op}$, which means exactly that 
$(Z^\typical)_k$ is fibrant in $\mathcal{S}^\mathcal{C}$. This 
conclusion implies that $\lambda_k$ is a weak equivalence, by 
the proof of \cite[pg. 407, Lemma 2.11]{GJ}, giving the desired 
result.
\end{proof} 
\par
Given $X^\typical \in (\cat)^\mathcal{C}$, we let 
$((X^\typical)_\fibrant)^{\superc}$ denote the value of 
$(X^\typical)_\fibrant \: \mathcal{C} \rightarrow \cat$ on the object 
$C \in \mathcal{C}$.
\begin{Lem}\label{generalC}
If $X^\typical$ is an object in $(\cat)^\mathcal{C}$, then 
there is a weak equivalence
\[\holim_{\superc \in \mathcal{C}} (X^{\superc})^{hG} 
\overset{\simeq}{\longrightarrow} \holim_{\superc \in \mathcal{C}} 
(((X^\typical)_\fibrant)^{\superc})^{G}.\] 
\end{Lem}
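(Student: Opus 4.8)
The plan is to construct the asserted map directly via a lifting argument in the injective model structure and then show it induces a weak equivalence after applying $\holim_\mathcal{C}$. First I would unwind the definition: since $(X^{\superc})^{hG} = (X^{\superc}_{fG})^G$, applying the fibrant replacement functor $(-)_{fG} \colon \cat \to \cat$ objectwise to $X^\typical$ gives a diagram $X^\typical_{fG}$ in $(\cat)^\mathcal{C}$ together with a morphism $X^\typical \to X^\typical_{fG}$ whose value at each $C \in \mathcal{C}$ is the trivial cofibration $X^{\superc} \to X^{\superc}_{fG}$ in $\cat$; hence $X^\typical \to X^\typical_{fG}$ is an objectwise trivial cofibration, and therefore a trivial cofibration in the injective model structure on $(\cat)^\mathcal{C}$. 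Since $(X^\typical)_\fibrant$ is fibrant in $(\cat)^\mathcal{C}$, the lifting axiom produces a map $\phi^\typical \colon X^\typical_{fG} \to (X^\typical)_\fibrant$ in $(\cat)^\mathcal{C}$ that is compatible with the two maps out of $X^\typical$.

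Next I would check that $\phi^\typical$ is an objectwise weak equivalence between objectwise fibrant $\mathcal{C}$-diagrams in $\cat$. For each $C$, the spectrum $X^{\superc}_{fG}$ is fibrant in $\cat$ by construction, and $((X^\typical)_\fibrant)^{\superc}$ is fibrant in $\cat$ by the property of fibrations in $(\cat)^\mathcal{C}$ recalled earlier in this section (a fibrant diagram is objectwise fibrant). Moreover, the triangle $X^{\superc} \to X^{\superc}_{fG} \overset{\phi^{\superc}}{\to} ((X^\typical)_\fibrant)^{\superc}$ commutes, with total composite the map $X^{\superc} \to ((X^\typical)_\fibrant)^{\superc}$, which is a weak equivalence in $\cat$ because $X^\typical \to (X^\typical)_\fibrant$, being a trivial cofibration in the injective structure, is an objectwise weak equivalence. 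Two-out-of-three then gives that $\phi^{\superc}$ is a weak equivalence for every $C$.

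Finally I would apply the fixed-point functor $(-)^G \colon \cat \to \spt$, which, being right adjoint to the trivial-action functor $t \colon \spt \to \cat$ (a functor that preserves cofibrations and weak equivalences), is right Quillen; hence it preserves fibrant objects and weak equivalences between fibrant objects. Thus $(\phi^{\superc})^G \colon (X^{\superc})^{hG} = (X^{\superc}_{fG})^G \to (((X^\typical)_\fibrant)^{\superc})^G$ is, for each $C$, a weak equivalence between fibrant spectra, so that $C \mapsto (X^{\superc})^{hG}$ and $C \mapsto (((X^\typical)_\fibrant)^{\superc})^G$ are $\mathcal{C}$-diagrams of fibrant spectra joined by the objectwise weak equivalence $(\phi^\typical)^G$; since the homotopy limit formed in $\spt$ preserves objectwise weak equivalences between objectwise fibrant diagrams, $\holim_\mathcal{C}(\phi^\typical)^G$ is the required weak equivalence. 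The one point needing care — rather than any genuine obstacle — is the model-categorical bookkeeping: that injective cofibrations and weak equivalences in $(\cat)^\mathcal{C}$ are detected objectwise (which makes the lift $\phi^\typical$ available and its target objectwise fibrant), that $(-)^G$ is right Quillen for $(\spt, \cat)$ (already used in the proof of Theorem \ref{derived}), and that $\holim_\mathcal{C}$ behaves well on objectwise fibrant diagrams.
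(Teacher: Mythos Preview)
Your proposal is correct and follows essentially the same approach as the paper: apply $(-)_{fG}$ objectwise to obtain a trivial cofibration $X^\typical \to (X^\typical)_{fG}$ in the injective structure, lift against the fibrant object $(X^\typical)_\fibrant$ to get a map $\ell^\typical$ (your $\phi^\typical$), observe that it is an objectwise weak equivalence between objectwise fibrant diagrams in $\cat$, and then apply $(-)^G$ and $\holim_\mathcal{C}$. Your version spells out a few justifications (two-out-of-three for $\phi^{\superc}$, the right-Quillen property of $(-)^G$) that the paper leaves implicit, but the argument is the same.
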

\begin{proof}
Let $(X^\typical)_{fG}$ be the object in $(\cat)^\mathcal{C}$ that 
is equal to the composition of functors 
\[ (-)_{fG} \circ (X^\typical) \: \mathcal{C} \rightarrow \cat, \ \ \ 
C \mapsto (X^{\superc})_{fG}.\] 
Since $X^\typical \rightarrow (X^\typical)_{fG}$ is a 
trivial cofibration in $(\cat)^\mathcal{C}$, the fibrant object 
$(X^\typical)_\fibrant$ induces a weak equivalence 
\[\ell^\typical \: (X^\typical)_{fG} \overset{\simeq}{\longrightarrow} 
(X^\typical)_\fibrant\] in 
$(\cat)^\mathcal{C}$. Therefore, since 
$(X^{\superc})_{fG}$ and 
$((X^\typical)_\fibrant)^{\superc}$ 
are fibrant discrete $G$-spectra, for each $C \in \mathcal{C}$,
there is a weak equivalence
\[\holim_{\superc \in \mathcal{C}} (\ell^{\superc})^G \:
\holim_{\superc \in \mathcal{C}} (X^{\superc})^{hG} = 
\holim_{\superc \in \mathcal{C}} ((X^{\superc})_{fG})^G 
\overset{\simeq}{\longrightarrow} \holim_{\superc \in \mathcal{C}} 
(((X^\typical)_\fibrant)^{\superc})^G.\] 
\end{proof}
\par
By letting $\mathcal{C} = \Delta$, Theorem \ref{derived} and 
Lemma \ref{generalC} immediately yield the next result, which allows 
us to conclude that {\em homotopy} 
fixed points for delta-discrete $G$-spectra, 
$(-)^{h_\delta G}$, can indeed be regarded as the total right derived 
functor of fixed points, in the appropriate sense.
\begin{Thm}\label{derivedfunctor}
If $\holim_\Delta X^\bullet$ is a delta-discrete $G$-spectrum, 
then there is a zigzag 
\[(\holim_\Delta X^\bullet)^{h_\delta G} 
\overset{\simeq}{\longrightarrow} \holim_\Delta
((X^\bullet)_\fibrant)^{G} 
\overset{\simeq}{\longleftarrow} 
\bigl(\mathbf{R}(\lim_\Delta (-)^G)\bigr)(X^\bullet)\] of 
weak equivalences in $\mathrm{Spt}$.
\end{Thm}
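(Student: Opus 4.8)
The plan is to deduce the theorem immediately from Theorem \ref{derived} and Lemma \ref{generalC} by specializing the small category $\mathcal{C}$ to $\Delta$, after using Definition \ref{cosimplicial} to rewrite the left-hand term of the desired zigzag. So the first step is to observe that, since $\holim_\Delta X^\bullet$ is a delta-discrete $G$-spectrum, $X^\bullet$ is in particular a cosimplicial object of $\cat$, that is, an object of $(\cat)^\Delta$, to which the machinery of Section \ref{properties} applies. By Definition \ref{cosimplicial},
\[(\holim_\Delta X^\bullet)^{h_\delta G} = \holim_{[n] \in \Delta} (X^n)^{hG},\]
so it suffices to build the zigzag with this spectrum in place of the left-hand term.

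Next I would apply Lemma \ref{generalC} with $\mathcal{C} = \Delta$ and the diagram $X^\bullet$, which yields a weak equivalence
\[\holim_{[n] \in \Delta} (X^n)^{hG} \overset{\simeq}{\longrightarrow} \holim_\Delta ((X^\bullet)_\fibrant)^{G}\]
in $\spt$; here $(-)_\fibrant$ is the fibrant replacement functor on $(\cat)^\Delta$ with the injective model structure, fixed just before Theorem \ref{derived}. Combined with the identification of the first step, this is exactly the left arrow of the asserted zigzag. Then I would invoke Theorem \ref{derived}, again with $\mathcal{C} = \Delta$: its conclusion states precisely that the canonical map
\[\bigl(\mathbf{R}(\lim_\Delta (-)^G)\bigr)(X^\bullet) = \lim_\Delta ((X^\bullet)_\fibrant)^{G} \overset{\simeq}{\longrightarrow} \holim_\Delta ((X^\bullet)_\fibrant)^{G}\]
is a weak equivalence of spectra. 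Read as an arrow into the common middle term $\holim_\Delta ((X^\bullet)_\fibrant)^G$, this supplies the right-hand arrow of the zigzag, and concatenating the two maps completes the argument.

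I do not expect any real obstacle at this stage, since all of the substance has already been carried out upstream: the fact that $\lim_\mathcal{C} \to \holim_\mathcal{C}$ is a weak equivalence on injectively fibrant $\mathcal{C}$-diagrams of spectra (via the translation to globally fibrant simplicial presheaves on $\mathcal{C}^{\mathrm{op}}$ and \cite{GJ}) is Theorem \ref{derived}, and the comparison of the objectwise homotopy fixed points $(X^n)^{hG}$ with $((X^\bullet)_\fibrant)^n)^G$ is Lemma \ref{generalC}. The only points that need care are bookkeeping ones: confirming that the fibrant replacement $(X^\bullet)_\fibrant$ appearing in Lemma \ref{generalC} and in Theorem \ref{derived} is literally the same functor (it is, both being taken in the injective model structure on $(\cat)^\Delta$), and checking that the arrow directions in the two cited results assemble into the zigzag exactly as displayed, with Definition \ref{cosimplicial} absorbing the rewriting of the source.
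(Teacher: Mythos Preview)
Your proposal is correct and matches the paper's approach exactly: the paper states that the theorem follows immediately from Theorem~\ref{derived} and Lemma~\ref{generalC} by letting $\mathcal{C} = \Delta$, which is precisely what you do. Your additional remarks about Definition~\ref{cosimplicial}, the identity of the fibrant replacement functor, and the arrow directions are all accurate bookkeeping.
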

\section{Several properties of the homotopy fixed points\\of 
delta-discrete $G$-spectra}\label{basicproperties}
\par
Suppose that $P$ is a finite group and let $Z$ be a $P$-spectrum. 
Recall (for example, from \cite[Section 5]{cts}) that if $Z'$ is a 
$P$-spectrum and a fibrant object in $\mathrm{Spt}$, with a 
map $Z \overset{\simeq}{\rightarrow} Z'$ that is 
$P$-equivariant and a weak 
equivalence in $\mathrm{Spt}$, 
then $Z^{h'P}$, the usual homotopy fixed point spectrum 
$\mathrm{Map}_P(EP_+, Z')$ in the case when 
$P$ is a finite discrete group, 
can also be defined as  
\begin{equation}\zig\label{classical}
Z^{h'P} = \holim_P Z'.
\end{equation}  
Then the 
following result shows that the homotopy fixed points $(-)^{h_\delta G}$ 
of Definition \ref{cosimplicial} agree with those of (\ref{classical}), 
when the profinite group $G$ is finite and discrete. 
\begin{Thm}\label{inthefinitecase}
Let $G$ be a finite discrete group and let 
$\holim_\Delta X^\bullet$ be a delta-discrete $G$-spectrum 
(that is, $X^\bullet$ is a cosimplicial $G$-spectrum, with each 
$X^n$ 
a fibrant spectrum). Then there is a weak equivalence 
\[(\holim_\Delta X^\bullet)^{h_\delta G} 
\overset{\simeq}{\longrightarrow} 
(\holim_\Delta X^\bullet)^{h'G}.\]
\end{Thm}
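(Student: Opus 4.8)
The plan is to unwind the left-hand side via Definition \ref{cosimplicial}, to replace each levelwise discrete homotopy fixed point spectrum by the classical one, and then to interchange two homotopy limits. By Definition \ref{cosimplicial} one has $(\holim_\Delta X^\bullet)^{h_\delta G} = \holim_{[n] \in \Delta}(X^n)^{hG}$, so the first task is to produce, naturally in $[n] \in \Delta$ and through fibrant spectra, a weak equivalence relating $(X^n)^{hG}$ to the classical homotopy fixed points $(X^n)^{h'G} = \holim_G X^n$; the last identification is (\ref{classical}) applied to the fibrant spectrum $X^n$ with $Z' = Z = X^n$.

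For that first task I would argue as follows. Since $G$ is finite, the functor $t \: \spt \to \cat$ with trivial $G$-action preserves cofibrations and weak equivalences (as noted just before Lemma \ref{quillen}), so its right adjoint $(-)^G \: \cat \to \spt$ is right Quillen and its total right derived functor is $\holim_G(-)$. Choosing a functorial fibrant replacement $(-)_{fG}$ in $\cat$ then gives a zigzag, natural in $[n]$,
\[(X^n)^{hG} = \bigl((X^n)_{fG}\bigr)^G \overset{\simeq}{\longrightarrow} \holim_G (X^n)_{fG} \overset{\simeq}{\longleftarrow} \holim_G X^n = (X^n)^{h'G},\]
in which the first map is a weak equivalence because $(X^n)_{fG}$ is fibrant in $\cat$, so that its strict $G$-fixed points compute the derived functor, and the second because $X^n \to (X^n)_{fG}$ is a $G$-equivariant weak equivalence of spectra and homotopy limits are homotopy invariant; here I use that $X^n$ and $(X^n)_{fG}$ are both fibrant in $\spt$ (a fibrant discrete $G$-spectrum is underlying fibrant when $G$ is finite), so that all three spectra displayed above are fibrant. (Alternatively, the agreement $(-)^{hG} \simeq (-)^{h'G}$ for finite $G$ may simply be quoted from \cite[Section 5]{cts}.) Applying $\holim_\Delta$, and using that it carries an objectwise weak equivalence between objectwise-fibrant $\Delta$-diagrams to a weak equivalence, I obtain a zigzag of weak equivalences $(\holim_\Delta X^\bullet)^{h_\delta G} \simeq \holim_{[n] \in \Delta}\holim_G X^n$.

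It then remains to interchange the homotopy limits. The assignment $([n],\ast) \mapsto X^n$ defines a diagram $\Delta \times G \to \spt$ (with $G$ regarded as a one-object category) that is objectwise fibrant, so the Fubini theorem for homotopy limits (see \cite{Hirschhorn}) gives $\holim_{[n]\in\Delta}\holim_G X^n \simeq \holim_{\Delta \times G} X \simeq \holim_G \holim_{[n]\in\Delta} X^n = \holim_G\bigl(\holim_\Delta X^\bullet\bigr)$, where the $G$-action on $\holim_\Delta X^\bullet$ is the one coming by functoriality of $\holim_\Delta$ from the cosimplicial $G$-spectrum $X^\bullet$. Finally $\holim_\Delta X^\bullet$ is a homotopy limit of the fibrant spectra $X^n$, hence is itself fibrant in $\spt$, so (\ref{classical}) (with $Z' = Z = \holim_\Delta X^\bullet$) identifies $\holim_G\bigl(\holim_\Delta X^\bullet\bigr)$ with $(\holim_\Delta X^\bullet)^{h'G}$; stringing the equivalences together yields the stated weak equivalence. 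I expect the real obstacle to be the interchange of homotopy limits — one must be sure that the iterated constructions genuinely compute the single homotopy limit over $\Delta \times G$, which is precisely where objectwise fibrancy of $([n],\ast) \mapsto X^n$ enters — while the remaining steps amount to routine fibrancy bookkeeping.
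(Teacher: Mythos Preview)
Your approach is essentially the paper's: pass from $(X^n)^{hG}=((X^n)_{fG})^G$ to $\holim_G (X^n)_{fG}$ via the canonical comparison (the paper cites \cite[Proposition~6.39]{Jardine} for this step), apply $\holim_\Delta$, swap the two homotopy limits, and then invoke (\ref{classical}). The one substantive difference is that you insert the extra backward equivalence $\holim_G (X^n)_{fG}\overset{\simeq}{\longleftarrow}\holim_G X^n$ before interchanging, which leaves you with a zigzag rather than the single map $\overset{\simeq}{\longrightarrow}$ promised in the statement. The paper sidesteps this by never going back to $X^n$: it interchanges $\holim_{[n]\in\Delta}\holim_G (X^n)_{fG}\cong \holim_G \holim_{[n]\in\Delta}(X^n)_{fG}$ directly and then observes that the $G$-equivariant weak equivalence $\holim_\Delta X^\bullet \to \holim_{[n]\in\Delta}(X^n)_{fG}$ (with fibrant target) is exactly a choice of $Z\to Z'$ in (\ref{classical}), so the right-hand side is by definition $(\holim_\Delta X^\bullet)^{h'G}$. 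If you drop your detour through $\holim_G X^n$ and make this identification instead, your argument becomes the paper's and produces the directed weak equivalence as stated.
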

\begin{proof} 
Given $[n] \in \Delta$, by \cite[Proposition 6.39]{Jardine}, 
the canonical map 
\[(X^n)^{hG} = ((X^n)_{fG})^G \cong \lim_G (X^n)_{fG} 
\overset{\simeq}{\longrightarrow} 
\holim_G (X^n)_{fG}\] is a weak equivalence. 
Also, notice that the target 
(since $(X^n)_{fG}$ is a fibrant spectrum, 
by \cite[Lemma 3.10]{cts}) 
and the source 
of this weak equivalence are fibrant spectra. 
Thus, there is a weak equivalence
\[
(\holim_\Delta X^\bullet)^{h_\delta G} = 
\holim_{[n] \in \Delta} (X^n)^{hG}  
\overset{\simeq}{\longrightarrow} 
\holim_{[n] \in \Delta} \holim_G (X^n)_{fG} 
\cong \holim_G \holim_{[n] \in \Delta} (X^n)_{fG}.\] 
The proof is finished by noting that 
\[\holim_G \holim_{[n] \in \Delta} (X^n)_{fG} = 
(\holim_\Delta X^\bullet)^{h'G};\] this 
equality, which is an application of (\ref{classical}), is due to the fact that 
the map 
$\holim_\Delta X^\bullet \rightarrow \holim_{[n] \in \Delta} (X^n)_{fG}$ 
is $G$-equivariant and a weak equivalence (since each 
map $X^n \rightarrow (X^n)_{fG}$ is a weak equivalence 
between fibrant objects in $\mathrm{Spt}$), 
with target a fibrant spectrum.
\end{proof}
\par
Now let $G$ be 
any profinite group and let $X$ be a discrete $G$-spectrum. We will 
show that there is a $G$-equivariant map $X \rightarrow 
\mathrm{cc}_G(X)$ 
that is a weak equivalence of spectra, along with a weak 
equivalence $X^{hG} \rightarrow (\mathrm{cc}_G(X))^{h_\delta G}$. 
Since these weak equivalences exist for any $G$ and all $X \in \cat$, 
we can think of the world of delta-discrete $G$-spectra as being 
a generalization of the category $\cat$.
\par
Given any spectrum $Z$, 
it is not hard to see that 
there is an isomorphism 
\[\mathrm{Tot}(\mathrm{cc}^\bullet(Z)) \cong Z;\] this was noted, 
for example, in the setting of simplicial sets, in \cite[Section 1]{neisen} and 
is verified for an arbitrary simplicial model category in 
\cite[Remark B.16]{galoishess}. 
Since the Reedy category $\Delta$ has fibrant constants (see 
\cite[Corollary 15.10.5]{Hirschhorn}), the canonical map 
$\mathrm{Tot}(\mathrm{cc}^\bullet(Z)) \rightarrow 
\holim_\Delta \mathrm{cc}^\bullet(Z)$ is a weak equivalence, 
whenever $Z$ is a fibrant spectrum, by 
\cite[Theorem 18.7.4, (2)]{Hirschhorn}. Thus, if $Z$ is a fibrant 
spectrum, there is a weak equivalence 
\[\phi_Z \: Z \cong \mathrm{Tot}(\mathrm{cc}^\bullet(Z)) 
\overset{\simeq}{\longrightarrow} 
\holim_\Delta \mathrm{cc}^\bullet(Z)\] in $\mathrm{Spt}$. In 
particular, since the discrete $G$-spectrum 
$\widehat{X}$ is a fibrant spectrum, the 
map $\phi_{\widehat{X}}$ is a weak equivalence. Therefore, 
since the map $\psi \: X \rightarrow \widehat{X}$ is a weak 
equivalence in $\cat$ (by Lemma \ref{trivialandhard}), the 
$G$-equivariant map
\[\phi_{\widehat{X}} \circ \psi \: 
X \overset{\simeq}{\longrightarrow} \widehat{X} 
\overset{\simeq}{\longrightarrow} 
\holim_\Delta \mathrm{cc}^\bullet(\widehat{X}) = 
\mathrm{cc}_G(X)\] and 
the map 
\[\phi_{(\widehat{X})^{hG}} \circ (\psi)^{hG} \: X^{hG} 
\overset{\simeq}{\longrightarrow} (\widehat{X})^{hG} 
\overset{\simeq}{\longrightarrow} 
\holim_\Delta \mathrm{cc}^\bullet((\widehat{X})^{hG}) 
= (\mathrm{cc}_G(X))^{h_\delta G}\] are weak equivalences 
(the map $\phi_{(\widehat{X})^{hG}}$ is a weak equivalence because 
$(\widehat{X})^{hG}$ 
is a fibrant spectrum).
\section{Iterated homotopy fixed points for delta-discrete $G$-spectra}
\par
Throughout this section (except in Convention 5.1), we assume that 
the profinite group $G$ has finite vcd. 
We will show that $G$-homotopy fixed points for 
delta-discrete $G$-spectra have the transitivity property. To do this, 
we make use of the convention stated below.
\begin{proof}[$\mathbf{Convention \ 5.1}$]
\stepcounter{Lem}
Let $P$ be a discrete group and let $\mathcal{X}$ be a space. By 
\cite[Remark 10.3]{wilkerson}, a {\em proxy action} of $P$ on $\mathcal{X}$ 
is a space $\mathcal{Y}$ that is homotopy equivalent to $\mathcal{X}$ 
and has an action of $P$. Then in \cite[Remark 10.3]{wilkerson}, 
Dwyer and Wilkerson establish
the convention that $\mathcal{X}^{hP}$ is {\em equal} to 
$\mathcal{Y}^{hP}$, and 
a 
proxy action is sometimes referred to as an action. This 
convention 
is an important one: for example, in 
\cite[Section 10]{wilkerson}, this convention plays a role 
in Lemmas 10.4 and 10.6 and in the proof that 
$P$-homotopy fixed points 
have the transitivity property (their Lemma 10.5). 
Thus, in this section, we will make use of the related convention 
described below. 
\par
Let $G$ be any profinite group and let 
$X^{\bullet, \bullet}$ be a bicosimplicial 
discrete $G$-spectrum (that is, 
$X^{\bullet, \bullet}$ is a cosimplicial object 
in $c(\mathrm{Spt}_G)$), 
such that, for all $m, n \geq 0$, $X^{m, n}$ is a fibrant spectrum. 
Let $\{X^{n,n}\}_{[n] \in \Delta}$ be 
the cosimplicial discrete $G$-spectrum that is 
the diagonal 
of $X^{\bullet, \bullet}$; $\{X^{n,n}\}_{[n] \in \Delta}$ is 
defined to be the composition 
\[\Delta \rightarrow \Delta \times \Delta \rightarrow \mathrm{Spt}_G, \ \ \
[n] \mapsto ([n], [n]) \mapsto X^{n,n}.\] 
Then there is a natural 
$G$-equivariant map 
\begin{equation}\label{doubleholim}\zig
\holim_{\Delta \times \Delta} X^{\bullet, \bullet} 
\overset{\simeq}{\longrightarrow} \holim_{[n] \in \Delta} X^{n,n} 
\end{equation} that is a weak equivalence (see, for example, 
\cite[Lemma 5.33]{thomason} and \cite[Remark 19.1.6, Theorem 
19.6.7, (2)]{Hirschhorn}). Notice that the target of 
(\ref{doubleholim}), ${\holim_{[n] \in \Delta} X^{n,n}}$, 
is a delta-discrete $G$-spectrum. Thus, 
we identify the source of 
(\ref{doubleholim}), the $G$-spectrum 
$\holim_{\Delta \times \Delta} X^{\bullet, \bullet},$
with the delta-discrete $G$-spectrum 
${\holim_{[n] \in \Delta} X^{n,n}}$, 
so that
\begin{equation}\label{hdeltag}\zig
(\holim_{\Delta \times \Delta} 
X^{\bullet, \bullet})^{h_\delta G} 
\mathrel{\mathop:}= (\holim_{[n] \in \Delta} X^{n, n})^{h_\delta G}.
\end{equation}
\par
Let $\holim_{\Delta \times \Delta} (X^{\bullet, \bullet})^{hG}$ 
denote $\holim_{([m],[n]) \in \Delta \times \Delta} 
(X^{m,n})^{hG}$ and notice that, 
by Theorem \ref{derived} and Lemma \ref{generalC}, there is a zigzag of 
weak equivalences 
\[\holim_{\Delta \times \Delta} (X^{\bullet, \bullet})^{hG} 
\overset{\simeq}{\longrightarrow} 
\holim_{\Delta \times \Delta} ((X^{\bullet, \bullet})_\fibrant)^G 
\overset{\simeq}{\longleftarrow} 
\bigl(\mathbf{R}(\lim_{\Delta \times \Delta} (-)^G)\bigr)(X^{\bullet, \bullet}).\] 
Hence, it is natural to define the homotopy fixed points of the 
``($\Delta \times \Delta$)-discrete $G$-spectrum" 
$\holim_{\Delta \times \Delta} X^{\bullet, \bullet}$ as 
\[(\holim_{\Delta \times \Delta} X^{\bullet, \bullet})^{hG} 
= \holim_{\Delta \times \Delta} (X^{\bullet, \bullet})^{hG}.\] 
Since each $(X^{m,n})^{hG}$ is a fibrant spectrum, then, 
as in (\ref{doubleholim}), 
there is a weak equivalence
\[(\holim_{\Delta \times \Delta} X^{\bullet, \bullet})^{hG} 
\overset{\simeq}{\longrightarrow} \holim_{[n] \in \Delta} (X^{n,n})^{hG} 
= (\holim_{\Delta \times \Delta} X^{\bullet, \bullet})^{h_\delta G},\] 
which further justifies the convention given in (\ref{hdeltag}). 
\end{proof}
\par
As in the Introduction, let 
$H$ and $K$ be closed subgroups of $G$, with $H$ normal in $K$. 
Recall from (\ref{identify}) that, given $X \in \cat$, there is an 
identification 
\[X^{hH} = \holim_\Delta \mathrm{Map}_c(G^\bullet, \widehat{X})^H.\] 
Since the map $\psi \: X \rightarrow \widehat{X}$ is natural, it is clear from 
\cite{fibrantmodel} (to be specific, in \cite{fibrantmodel}, 
see pg. 145 and the proof of Theorem 5.2) that the above 
identification is natural in $X$. 
\par
Now let $\holim_\Delta X^\bullet$ be any delta-discrete $G$-spectrum. 
Using the naturality of the above identification, we have 
\[(\holim_\Delta X^\bullet)^{h_\delta H} = 
\holim_{[n] \in \Delta} (X^n)^{hH} = 
\holim_{[n] \in \Delta} \holim_{[m] \in \Delta} 
(\mathrm{Map}_c(G^\bullet, \widehat{X^n})^m)^H.\] Because of the 
isomorphism 
\[\holim_{[n] \in \Delta} \holim_{[m] \in \Delta} 
(\mathrm{Map}_c(G^\bullet, \widehat{X^n})^m)^H \cong 
\holim_{\Delta \times \Delta}
\mathrm{Map}_c(G^\bullet, \widehat{\,X^\bullet\,})^H\] and because 
homotopy limits are ends, which are only unique up to isomorphism, 
we can set 
\[(\holim_\Delta X^\bullet)^{h_\delta H} = 
\holim_{\Delta \times \Delta} \mathrm{Map}_c(G^\bullet, 
\widehat{\,X^\bullet\,})^H.\] 
\par
By Lemma \ref{fibrancy}, for each $m, n \geq 0$, 
$\mathrm{Map}_c(G^\bullet, \widehat{X^n})^m$ is a fibrant 
discrete $H$-spectrum, so that 
$(\mathrm{Map}_c(G^\bullet, \widehat{X^n})^m)^H$ is a fibrant spectrum. 
Also, since 
the diagram $\mathrm{Map}_c(G^\bullet, \widehat{\,X^\bullet\,})$ is a 
bicosimplicial discrete $K$-spectrum, 
$\mathrm{Map}_c(G^\bullet, \widehat{\,X^\bullet\,})^H$ is a 
bicosimplicial discrete $K/H$-spectrum. Thus, 
the discussion above in Convention 5.1 
implies that 
there is a $K/H$-equivariant map 
\[(\holim_\Delta X^\bullet)^{h_\delta H} 
= \holim_{\Delta \times \Delta} \mathrm{Map}_c(G^\bullet, 
\widehat{\,X^\bullet\,})^H
\overset{\simeq}{\longrightarrow} 
\holim_{[n] \in \Delta} 
(\mathrm{Map}_c(G^\bullet, \widehat{X^n})^n)^H\]
that is a weak equivalence, and the target 
of this weak equivalence is a delta-discrete $K/H$-spectrum. 
Therefore, by Convention 5.1, we can 
identify $(\holim_\Delta X^\bullet)^{h_\delta H}$ with the 
delta-discrete $K/H$-spectrum 
$\holim_{[n] \in \Delta} 
(\mathrm{Map}_c(G^\bullet, \widehat{X^n})^n)^H$, 
and hence, by (\ref{hdeltag}) and as in 
the proof of Theorem \ref{steptwo}, 
we have 
\begin{align*}
\bigl((\holim_\Delta X^\bullet)^{h_\delta H}\bigr)^{h_\delta K/H} 
& = \bigl(\holim_{[n] \in \Delta} 
(\mathrm{Map}_c(G^\bullet, \widehat{X^n})^n)^H\bigr)^{h_\delta K/H} \\
& = \holim_{[n] \in \Delta} 
\bigl(\bigl((\mathrm{Map}_c(G^\bullet, 
\widehat{X^n})^n)^H\bigr)_{fK/H}\bigr)^{K/H} \\
& \overset{\simeq}{\longleftarrow} \holim_{[n] \in \Delta} 
\bigl((\mathrm{Map}_c(G^\bullet, 
\widehat{X^n})^n)^H\bigr)^{K/H} \\
& = \holim_{[n] \in \Delta} 
(\mathrm{Map}_c(G^\bullet, 
\widehat{X^n})^n)^K \\ 
& \overset{\simeq}{\longleftarrow} 
\holim_{[n] \in \Delta} \holim_{[m] \in \Delta} 
(\mathrm{Map}_c(G^\bullet, \widehat{X^n})^m)^K \\
& = \holim_{[n] \in \Delta} (X^n)^{hK} \\
& = (\holim_\Delta X^\bullet)^{h_\delta K}.
\end{align*}
\par 
We summarize our work above in the following theorem. 
\begin{Thm}\label{transitivityfordeltadiscretes}
If $G$ has finite vcd and $\holim_\Delta X^\bullet$ 
is a delta-discrete $G$-spectrum, then 
there is a weak equivalence 
\[\bigl((\holim_\Delta X^\bullet)^{h_\delta H}\bigr)^{h_\delta K/H} 
\overset{\simeq}{\longleftarrow} 
(\holim_\Delta X^\bullet)^{h_\delta K}.\]
\end{Thm}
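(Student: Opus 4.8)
The plan is to assemble into a single zigzag the chain of weak equivalences built up in the three paragraphs immediately preceding the statement; what follows indicates how the pieces fit together. Write $\widehat{\,X^\bullet\,}$ for the cosimplicial discrete $G$-spectrum $[n] \mapsto \widehat{X^n}$. Since the identification (\ref{identify}) is natural in the discrete $G$-spectrum being acted on, one has $(X^n)^{hH} = \holim_{[m] \in \Delta}(\mathrm{Map}_c(G^\bullet, \widehat{X^n})^m)^H$ for each $[n] \in \Delta$, and hence, applying $\holim_{[n] \in \Delta}$ and using the canonical (up to isomorphism) agreement of a homotopy limit over $\Delta \times \Delta$ with the corresponding iterated homotopy limit, I would set $(\holim_\Delta X^\bullet)^{h_\delta H} = \holim_{\Delta \times \Delta} \mathrm{Map}_c(G^\bullet, \widehat{\,X^\bullet\,})^H$, exactly as done before the statement.

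First I would record, using Lemma \ref{fibrancy} with $L = H$ and with $L = K$, that every entry $\mathrm{Map}_c(G^\bullet, \widehat{X^n})^m$ is a fibrant discrete $K$-spectrum, hence that each $(\mathrm{Map}_c(G^\bullet, \widehat{X^n})^m)^H$ is a fibrant spectrum, and that $\mathrm{Map}_c(G^\bullet, \widehat{\,X^\bullet\,})^H$ is a bicosimplicial discrete $K/H$-spectrum. The diagonal comparison of Convention 5.1, the weak equivalence (\ref{doubleholim}), is then available, is $K/H$-equivariant, and identifies $(\holim_\Delta X^\bullet)^{h_\delta H}$ with the delta-discrete $K/H$-spectrum $\holim_{[n] \in \Delta}(\mathrm{Map}_c(G^\bullet, \widehat{X^n})^n)^H$, to which (\ref{hdeltag}) then applies. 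By Definition \ref{cosimplicial}, $(-)^{h_\delta K/H}$ sends this object to $\holim_{[n] \in \Delta}\bigl((\mathrm{Map}_c(G^\bullet, \widehat{X^n})^n)^H\bigr)^{hK/H}$. Now recall that $(-)^H \colon \mathrm{Spt}_K \rightarrow \mathrm{Spt}_{K/H}$ preserves fibrant objects; since each $\mathrm{Map}_c(G^\bullet, \widehat{X^n})^n$ is fibrant in $\mathrm{Spt}_K$, the fibrant-replacement map realizes $\bigl((\mathrm{Map}_c(G^\bullet, \widehat{X^n})^n)^H\bigr)^{K/H} \rightarrow \bigl((\mathrm{Map}_c(G^\bullet, \widehat{X^n})^n)^H\bigr)^{hK/H}$ as a weak equivalence between fibrant spectra, so that, after $\holim_{[n]}$ and using $(Y^H)^{K/H} = Y^K$, there is a weak equivalence $\holim_{[n] \in \Delta}(\mathrm{Map}_c(G^\bullet, \widehat{X^n})^n)^K \overset{\simeq}{\longrightarrow} \bigl((\holim_\Delta X^\bullet)^{h_\delta H}\bigr)^{h_\delta K/H}$. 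A further application of the diagonal comparison (\ref{doubleholim}), now to the $K$-fixed-point bicosimplicial spectrum, gives a weak equivalence $\holim_{[n] \in \Delta}\holim_{[m] \in \Delta}(\mathrm{Map}_c(G^\bullet, \widehat{X^n})^m)^K \overset{\simeq}{\longrightarrow} \holim_{[n] \in \Delta}(\mathrm{Map}_c(G^\bullet, \widehat{X^n})^n)^K$, whose source is $\holim_{[n] \in \Delta}(X^n)^{hK} = (\holim_\Delta X^\bullet)^{h_\delta K}$ by (\ref{identify}). Composing this zigzag, each arrow of which is a weak equivalence between fibrant spectra, yields the asserted weak equivalence from $(\holim_\Delta X^\bullet)^{h_\delta K}$.

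The one place that calls for care — a bookkeeping subtlety rather than a real obstacle — is the handling of the identifications: homotopy limits are ends and so are only pinned down up to canonical isomorphism, so one must choose compatible models for $\holim_{\Delta \times \Delta}$ and for the diagonal $\holim_\Delta$, precisely as arranged in Convention 5.1, in order that the relevant $K/H$-actions be strictly present and that (\ref{hdeltag}) apply literally. One must also keep in force the standing assumption that $G$ has finite vcd, which is what makes the identification (\ref{identify}) available and hence legitimizes the whole argument; granting that, the remaining steps are formal consequences of Lemma \ref{fibrancy}, the preservation of fibrancy by $(-)^H$, and the two diagonal comparisons.
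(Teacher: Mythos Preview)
Your proposal is correct and follows essentially the same route as the paper: the theorem is a summary of the chain of identifications and weak equivalences worked out in the three paragraphs preceding it, and you have traced that chain accurately, invoking Lemma~\ref{fibrancy}, the fibrancy-preservation of $(-)^H \colon \mathrm{Spt}_K \to \mathrm{Spt}_{K/H}$, Convention~5.1, and the diagonal comparison (\ref{doubleholim}) at exactly the places the paper does. One cosmetic remark: what you call a ``zigzag'' at the end is in fact a genuine composition, since both arrows point from $(\holim_\Delta X^\bullet)^{h_\delta K}$ toward the iterated object, which is precisely the single leftward arrow in the theorem's display.
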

\section{The relationship between $X^{hH}$ and 
$(X_\delta)^{h_\delta H}$, in general}\label{lastsection}
\par
Let $G$ be an arbitrary profinite group and let $X$ be any 
discrete $G$-spectrum. Also, as usual, let $H$ and $K$ be 
closed subgroups of $G$, with $H$ normal in $K$. 
As mentioned near the beginning of the Introduction, 
it is not known, in general, that the ``discrete homotopy 
fixed points" $X^{hH}$ have a $K/H$-action. In 
this section, we consider this 
issue by using the framework of delta-discrete $G$-spectra. 
\par
Given the initial data above, there is a commutative diagram  
\[\xymatrix@+.1in{X_{fH} 
\ar[dr]_-{\Psi_{_{\negthinspace H}}}
& X \ar[l]_-{\mathpzc{f}_{_H}}^-\simeq 
\ar[d]^<<<<<{\underset{U \vartriangleleft_o H} \colim \, \Psi^U} 
\ar[r]^-\Psi_-\simeq & X_\delta\\ 
& \displaystyle{\colim_{U \vartriangleleft_o H}} \, (X_\delta)^U, \ar[ur] &}\] 
where $\colim_{U \vartriangleleft_o H} \Psi^U$, a morphism 
in $\mathrm{Spt}_H$ whose label is a slight abuse of 
notation, is defined to be the composition
\[X \cong \colim_{U \vartriangleleft_o H} X^U \rightarrow 
\colim_{U \vartriangleleft_o H} \, (X_\delta)^U,\] and 
$\Psi_{_{\negthinspace H}}$, a morphism between 
fibrant objects in $\mathrm{Spt}_H$, 
exists because, in 
$\mathrm{Spt}_H$, 
$\mathpzc{f}_{_H}$ is a trivial cofibration and 
$\colim_{U \vartriangleleft_o H} (X_\delta)^U$ is 
fibrant (by \cite[Theorem 3.5]{fibrantmodel}).
\begin{Def}\label{relatingthetwotypes}
The map $\Psi_{_{\negthinspace H}}$ induces the map 
\[X^{hH} = (X_{fH})^H 
\xrightarrow{\scriptstyle{(\Psi_{_{\negthinspace H}})^H}}
\bigl(\colim_{U \vartriangleleft_o H} \, (X_\delta)^U\bigr)^H 
\cong \holim_\Delta \mathrm{Map}_c(G^\bullet, \widehat{X})^H\] and 
(as in the proof of Lemma \ref{consistency}) there is a weak equivalence
\[\holim_\Delta \mathrm{Map}_c(G^\bullet, \widehat{X})^H 
\overset{\simeq}{\longrightarrow} 
\holim_\Delta \mathrm{Map}_c(G^\bullet, \widehat{X})^{hH} 
= (X_\delta)^{h_\delta H}.\] The composition of these two 
maps defines the map
\[\rho(X)_{_{\negthinspace H}} \: X^{hH} \rightarrow (X_\delta)^{h_\delta H}.\]
(The map $\rho(X)_{_{\negthinspace H}}$ is not the same as the map 
of Lemma \ref{consistency} 
(when $L = H$), since $\rho(X)_{_{\negthinspace H}}$ 
does not use the identification of (\ref{identify}).)
\end{Def}
\par
Notice that if the map $\colim_{U \vartriangleleft_o H} \Psi^U$ 
is a weak equivalence in $\mathrm{Spt}$, then 
the map $\Psi_{_{\negthinspace H}}$ is a 
weak equivalence in $\mathrm{Spt}_H$, and hence, 
$(\Psi_{_{\negthinspace H}})^H$ is a weak equivalence. Thus, 
if $\colim_{U \vartriangleleft_o H} \Psi^U$ 
is a weak equivalence in $\mathrm{Spt}$, then 
$\rho(X)_{_{\negthinspace H}}$ is a weak equivalence. 
This observation, 
together with \cite[proof of Theorem 4.2]{fibrantmodel} and 
\cite[Proposition 3.3]{mitchell}, immediately yields the following result. 
\begin{Thm}\label{rho}
If $G$ is any profinite group and $X \in \cat$, then 
the map \[\rho(X)_{_{\negthinspace H}} \: 
X^{hH} \overset{\simeq}{\longrightarrow} (X_\delta)^{h_\delta H}\] 
is a weak equivalence, whenever 
any one of the following conditions holds:
\begin{enumerate}
\item[(i)]
$H$ has finite vcd;
\item[(ii)]
$G$ has finite vcd;
\item[(iii)]
there exists a fixed integer $p$ such that $H^s_c(U; \pi_t(X)) = 0$, for all 
$s>p$, all $t \in \mathbb{Z}$, and all $U \vartriangleleft_o H$;  
\item[(iv)]
there exists a fixed integer $q$ such that $H^s_c(U; \pi_t(X)) = 0$, for all 
$t > q$, all $s \geq 0$, and all $U \vartriangleleft_o H$; or
\item[(v)]
there exists a fixed integer $r$ such that $\pi_t(X) = 0$, for all $t > r.$
\end{enumerate} 
\end{Thm}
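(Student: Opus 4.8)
The plan is to reduce everything to the single observation that $\rho(X)_{_{\negthinspace H}}$ is a weak equivalence as soon as the map $\colim_{U \vartriangleleft_o H} \Psi^U$ is a weak equivalence in $\mathrm{Spt}$, which was established in the paragraph immediately preceding the statement. Indeed, if that colimit map is a weak equivalence, then by the commutative triangle defining $\Psi_{_{\negthinspace H}}$ (together with the fact that $\mathpzc{f}_{_H}$ is a weak equivalence), $\Psi_{_{\negthinspace H}}$ is a weak equivalence in $\mathrm{Spt}_H$, hence so is $(\Psi_{_{\negthinspace H}})^H$ (a weak equivalence between fibrant objects of $\mathrm{Spt}_H$, to which the fixed-point functor may be applied), and composing with the weak equivalence of Definition \ref{relatingthetwotypes} gives the claim. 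So the whole proof amounts to checking that each of the five hypotheses (i)--(v) forces $\colim_{U \vartriangleleft_o H} \Psi^U$ to be a weak equivalence.

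Next I would handle (i)--(v) by invoking the results cited in the statement. The map $\Psi \: X \to X_\delta$ is a $G$-equivariant weak equivalence of spectra, so for each $U \vartriangleleft_o H$ the map $\Psi^U \: X^U \to (X_\delta)^U$ fits into the framework of \cite[proof of Theorem 4.2]{fibrantmodel}, where exactly this kind of ``does taking $U$-fixed points and then filtered colimit over $U$ preserve a weak equivalence'' question is analyzed; the point is that although $\{X^U\}$ need not be a diagram of fibrant spectra, a descent/cohomological-dimension argument (as in Lemma \ref{trivialandhard}, passing through sheaves of spectra on $H \negthinspace - \negthinspace \mathbf{Sets}_{df}$) shows the colimit map is still a weak equivalence under a finiteness condition on the relevant continuous cohomology. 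Cases (i) and (ii) are the direct finite-vcd hypotheses (note (ii) implies (i), since a closed subgroup of a profinite group of finite vcd again has finite vcd); cases (iii) and (iv) are the two one-sided vanishing ranges for $H^s_c(U;\pi_t(X))$ that make the relevant homotopy spectral sequence converge appropriately; and (v) is the connectivity hypothesis $\pi_t(X)=0$ for $t>r$, which via \cite[Proposition 3.3]{mitchell} implies a uniform cohomological vanishing range of type (iv). In each instance the cited results guarantee that $\colim_{U \vartriangleleft_o H}\Psi^U$ is a weak equivalence.

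Concretely, the steps in order are: (1) record that it suffices to prove $\colim_{U \vartriangleleft_o H}\Psi^U$ is a weak equivalence, by the triangle and Definition \ref{relatingthetwotypes}; (2) observe (ii)$\Rightarrow$(i), so (i) and (ii) are subsumed in the single case ``$H$ has finite vcd,'' for which \cite[proof of Theorem 4.2]{fibrantmodel} gives the conclusion directly; (3) treat (iii) and (iv) by the same descent argument of \cite[Theorem 4.2]{fibrantmodel}, whose proof only uses a uniform bound on the continuous cohomology of the open normal subgroups of $H$ with coefficients in the homotopy groups of $X$ — exactly what (iii) and (iv) supply (in the $s$-direction and the $t$-direction respectively); (4) deduce (v) from (iv) via \cite[Proposition 3.3]{mitchell}, which converts the connectivity bound $\pi_t(X)=0$ for $t>r$ into the vanishing of $H^s_c(U;\pi_t(X))$ in a range of the form required by (iv). Then in all five cases the hypothesis of the preceding paragraph is met, so $\rho(X)_{_{\negthinspace H}}$ is a weak equivalence, completing the proof.

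The main obstacle is step (3): verifying that the argument of \cite[proof of Theorem 4.2]{fibrantmodel}, which is presumably phrased under a finite-vcd hypothesis on $H$, in fact goes through verbatim under the weaker uniform-vanishing hypotheses (iii) and (iv). This is really a matter of inspecting that proof and noting that it never uses finite vcd beyond the single consequence that there is an integer bounding the continuous cohomological dimension of the $U$'s on the discrete modules $\pi_t(X)$ — so I would phrase this carefully rather than reprove it, pointing to the exact place in \cite{fibrantmodel} where the bound is used and remarking that (iii), (iv) (and, through \cite{mitchell}, (v)) each furnish precisely that bound.
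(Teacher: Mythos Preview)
Your approach is exactly the paper's: the entire proof there is the sentence preceding the theorem, reducing to the weak equivalence of $\colim_{U \vartriangleleft_o H}\Psi^U$ and then citing \cite[proof of Theorem 4.2]{fibrantmodel} and \cite[Proposition 3.3]{mitchell}. One small misattribution: the implication (v)$\Rightarrow$(iv) is trivial (cohomology with zero coefficients vanishes, so take $q=r$) and does not use Mitchell; the Mitchell citation is instead part of the input, alongside \cite{fibrantmodel}, that handles the convergence/vanishing-range cases (iii)--(iv) directly, as the paper itself notes right after the theorem that ``(v) implies (iv)'' without further comment.
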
 
\par
In the statement of Theorem \ref{rho}, note that (ii) implies (i) and (v) implies 
(iv). Also, it is not known, in general, that 
$\colim_{U \vartriangleleft_o H} \Psi^U$ is a weak equivalence, 
so that we do not know, in general, 
that $X^{hH}$ and $(X_\delta)^{h_\delta H}$ 
are equivalent. 
\par
As noted in Definition \ref{relatingthetwotypes}, 
$(X_\delta)^{h_\delta H}$ is equivalent to the 
delta-discrete $K/H$-spectrum 
$\holim_\Delta \mathrm{Map}_c(G^\bullet, \widehat{X})^H$, and hence, 
it is natural 
to identify them and to set 
\[\bigl((X_\delta)^{h_\delta H}\bigr)^{h_\delta K/H} = 
\bigl(\holim_\Delta \mathrm{Map}_c(G^\bullet, 
\widehat{X})^H\bigr)^{h_\delta K/H}.\]  
\begin{Thm}\label{deltaiteration}
If $G$ is any profinite group and $X \in \cat$, then 
\[\bigl((X_\delta)^{h_\delta H}\bigr)^{h_\delta K/H} 
\simeq (X_\delta)^{h_\delta K}.\]
\end{Thm}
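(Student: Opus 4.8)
The plan is to reduce both sides to the common spectrum $\holim_{[n]\in\Delta}\bigl(\mathrm{Map}_c(G^\bullet,\widehat{X})^n\bigr)^K$ by a pair of weak equivalences of exactly the shape that already appears in the proofs of Lemma \ref{consistency} and Theorem \ref{steptwo}, and then to splice them into the asserted zigzag. Note that no finite-vcd hypothesis will be needed, since $X_\delta$ is built from the honest discrete $G$-spectra $\mathrm{Map}_c(G^{n+1},\widehat{X})$, which by Lemma \ref{fibrancy} are already codegreewise fibrant discrete $L$-spectra for every closed $L \leq G$.

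First I would unwind the left-hand side. By the identification adopted immediately before the statement,
\[\bigl((X_\delta)^{h_\delta H}\bigr)^{h_\delta K/H}=\bigl(\holim_\Delta\mathrm{Map}_c(G^\bullet,\widehat{X})^H\bigr)^{h_\delta K/H}.\]
By Lemma \ref{fibrancy}, $\mathrm{Map}_c(G^\bullet,\widehat{X})$ is a cosimplicial fibrant discrete $K$-spectrum, and since $(-)^H\colon\mathrm{Spt}_K\to\mathrm{Spt}_{K/H}$ preserves fibrant objects (recalled just before Lemma \ref{stepone}), $\mathrm{Map}_c(G^\bullet,\widehat{X})^H$ is a cosimplicial fibrant discrete $K/H$-spectrum that is codegreewise fibrant in $\mathrm{Spt}$; hence $\holim_\Delta\mathrm{Map}_c(G^\bullet,\widehat{X})^H$ genuinely satisfies Definition \ref{basic}, and Definition \ref{cosimplicial} gives
\[\bigl((X_\delta)^{h_\delta H}\bigr)^{h_\delta K/H}=\holim_{[n]\in\Delta}\bigl((\mathrm{Map}_c(G^\bullet,\widehat{X})^n)^H\bigr)^{hK/H}.\]
For each $[n]$ the fibrant replacement map in $\mathrm{Spt}_{K/H}$ is a weak equivalence between fibrant objects, so, exactly as in the proof of Theorem \ref{steptwo}, applying $(-)^{K/H}$ and then $\holim_\Delta$ produces a weak equivalence whose source is $\holim_{[n]\in\Delta}\bigl((\mathrm{Map}_c(G^\bullet,\widehat{X})^n)^H\bigr)^{K/H}=\holim_{[n]\in\Delta}(\mathrm{Map}_c(G^\bullet,\widehat{X})^n)^K$, using $(Y^H)^{K/H}=Y^K$ for any discrete $K$-spectrum $Y$.

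Next I would treat the right-hand side the same way. The delta-discrete $G$-spectrum $X_\delta=\holim_\Delta\mathrm{Map}_c(G^\bullet,\widehat{X})$ is in particular a delta-discrete $K$-spectrum, so $(X_\delta)^{h_\delta K}=\holim_{[n]\in\Delta}(\mathrm{Map}_c(G^\bullet,\widehat{X})^n)^{hK}$; since $\mathrm{Map}_c(G^\bullet,\widehat{X})$ is a cosimplicial fibrant discrete $K$-spectrum (Lemma \ref{fibrancy}), the fibrant replacement maps in $\mathrm{Spt}_K$ give, after $(-)^K$ and $\holim_\Delta$ and arguing as in the proof of Lemma \ref{consistency}, a weak equivalence
\[\holim_{[n]\in\Delta}(\mathrm{Map}_c(G^\bullet,\widehat{X})^n)^K\overset{\simeq}{\longrightarrow}(X_\delta)^{h_\delta K}.\]
Splicing this with the weak equivalence from the previous paragraph yields the claimed equivalence.

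I do not expect a serious obstacle: all the content sits in the two applications of the Lemma \ref{consistency}/Theorem \ref{steptwo} pattern and in the identifications already set up in this section. The only points warranting care are (i) checking that $\holim_\Delta\mathrm{Map}_c(G^\bullet,\widehat{X})^H$ really meets Definition \ref{basic}, i.e.\ codegreewise fibrancy in $\mathrm{Spt}$ of $(\mathrm{Map}_c(G^\bullet,\widehat{X})^n)^H$, which is exactly what Lemma \ref{fibrancy} together with the preservation of fibrant objects by $(-)^H$ supplies; and (ii) observing that, because $X_\delta$ is assembled from honest, codegreewise fibrant discrete $G$-spectra rather than from a general delta-discrete input, no finite-vcd hypothesis on $G$ and no bicosimplicial-diagonal bookkeeping (Convention 5.1) are needed --- which is precisely why the statement holds for an arbitrary profinite group.
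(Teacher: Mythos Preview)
Your proposal is correct and follows exactly the paper's approach: the paper's proof is a one-line assertion that, as in Theorem~\ref{steptwo}, there is a zigzag of weak equivalences through $\holim_\Delta \mathrm{Map}_c(G^\bullet,\widehat{X})^K$, and you have simply spelled out those two weak equivalences using the Lemma~\ref{consistency}/Theorem~\ref{steptwo} pattern. Your observations about why no finite-vcd hypothesis or bicosimplicial bookkeeping is required are also on point and explain why this theorem holds for arbitrary $G$.
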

\begin{proof}
As in the proof of Theorem \ref{steptwo}, 
it is easy to see that there is a zigzag of weak equivalences 
\[\bigl((X_\delta)^{h_\delta H}\bigr)^{h_\delta K/H} 
\overset{\simeq}{\longleftarrow} 
\holim_\Delta \mathrm{Map}_c(G^\bullet, \widehat{X})^K 
\overset{\simeq}{\longrightarrow} (X_\delta)^{h_\delta K}.\]
\end{proof}
\par
Given any profinite group $G$, 
Theorem \ref{deltaiteration} shows that, 
by using $(-)_\delta$ and 
$(-)^{h_\delta H}$, 
delta-discrete homotopy fixed 
points for discrete $G$-spectra are transitive. 
\section{Comparing two different models for the iterated\\homotopy 
fixed point spectrum}
\par
Let $G$ be any profinite group and let $X$ be a discrete $G$-spectrum. 
Recall from \cite[Definition 4.1]{iterated} that $X$ is a hyperfibrant 
discrete $G$-spectrum if the map 
\[\psi(X)^G_L \: (X_{fG})^L \rightarrow
((X_{fG})_{fL})^L\] is a weak 
equivalence for every closed subgroup $L$ of $G$. 
(We use ``$\psi$" in the notation ``$\psi(X)^G_L$," 
because we follow the notation of \cite{iterated}; this use 
of ``$\psi$" is not 
related to the map $\psi$ of Lemma \ref{trivialandhard}.) 
\par
Now suppose that $X$ is a hyperfibrant discrete $G$-spectrum and, as 
usual, let $H$ and $K$ be closed subgroups of $G$, with $H$ 
normal in $K$. These hypotheses imply that 
\begin{itemize} 
\item[(a)]
the map $\psi(X)^G_H \: (X_{fG})^H \overset{\simeq}{\longrightarrow} 
((X_{fG})_{fH})^H$ is a weak equivalence; 
\item[(b)]
the source of the map $\psi(X)^G_H$, the spectrum 
$(X_{fG})^H$, is a discrete 
$K/H$-spectrum; and 
\item[(c)]
since the composition $X \rightarrow
X_{fG} \rightarrow (X_{fG})_{fH}$ is a trivial cofibration 
and the target of the weak equivalence $X \rightarrow
X_{fH}$ is fibrant, in $\mathrm{Spt}_H$, there is a weak equivalence 
$\upsilon \: (X_{fG})_{fH} \rightarrow X_{fH}$ between fibrant 
objects, and hence, there is a weak equivalence $(X_{fG})^H 
\overset{\simeq}{\longrightarrow} X^{hH}$ that is defined by the 
composition
\[\upsilon^H \circ \psi(X)^G_H \: (X_{fG})^H 
\overset{\simeq}{\longrightarrow} ((X_{fG})_{fH})^H 
\overset{\simeq}{\longrightarrow} (X_{fH})^H = X^{hH}.\]
\end{itemize}
Thus, following \cite[Definition 4.5]{iterated}, 
it is natural to define 
\[(X^{hH})^{hK/H} \mathrel{\mathop:}= ((X_{fG})^H)^{hK/H}.\] 
\par
Let $G$ have finite vcd, so that, by Lemma \ref{stepone}, 
$X^{hH} = \holim_\Delta \mathrm{Map}_c(G^\bullet, \widehat{X})^H$ is 
a delta-discrete $K/H$-spectrum. Thus, 
\[(X^{hH})^{h_\delta K/H} = 
\holim_{[n] \in \Delta} \bigl((\mathrm{Map}_c(G^\bullet, 
\widehat{X})^n)^H\bigr)^{hK/H},\] and, by Theorem \ref{steptwo}, 
there is a weak equivalence 
\begin{equation}\label{secondhalf}\zig
\holim_\Delta \mathrm{Map}_c(G^\bullet, \widehat{X})^K 
\overset{\simeq}{\longrightarrow} (X^{hH})^{h_\delta K/H}.
\end{equation}
\par
The above discussion shows that 
when $G$ has finite vcd and $X$ is a hyperfibrant 
discrete $G$-spectrum, there are two different models for the 
iterated homotopy fixed point spectrum, $(X^{hH})^{hK/H}$ and 
$(X^{hH})^{h_\delta K/H}$, and we would like to 
know when they agree with each other. 
The following result gives 
a criterion for when 
$(X^{hH})^{hK/H}$ and $(X^{hH})^{h_\delta K/H}$ are 
equivalent. 
\begin{Thm}\label{comparison}
Let $G$ have finite vcd and suppose that $X$ is a hyperfibrant 
discrete $G$-spectrum. 
If the map $\psi(X)^K_H \: (X_{fK})^H \rightarrow ((X_{fK})_{fH})^H$ is a 
weak equivalence, then there is a weak equivalence
\begin{equation}\label{desired}\zig
(X^{hH})^{hK/H} \overset{\simeq}{\longrightarrow} (X^{hH})^{h_\delta K/H}.
\end{equation}
\end{Thm}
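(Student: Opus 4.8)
The strategy is to connect both of the iterated homotopy fixed point spectra in~(\ref{desired}) to $X^{hK}$ by zigzags of weak equivalences. For the right-hand side this is already available: by Theorem~\ref{steptwo}, restated in~(\ref{secondhalf}), together with the identification~(\ref{identify}) of $X^{hK}$ with $\holim_\Delta \mathrm{Map}_c(G^\bullet, \widehat{X})^K$, there is a weak equivalence $X^{hK} \overset{\simeq}{\longrightarrow} (X^{hH})^{h_\delta K/H}$. So it suffices to produce a zigzag of weak equivalences between $(X^{hH})^{hK/H} = ((X_{fG})^H)^{hK/H}$ and $X^{hK}$.

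Next I would record a convenient model for $X^{hK}$ over $K/H$. The functor $(-)^H \: \mathrm{Spt}_K \rightarrow \mathrm{Spt}_{K/H}$ is right adjoint to the inflation functor $\mathrm{Spt}_{K/H} \rightarrow \mathrm{Spt}_K$, which preserves cofibrations and weak equivalences; hence $(-)^H$ is right Quillen, so it preserves fibrant objects and weak equivalences between fibrant objects. Since $X_{fK}$ is fibrant in $\mathrm{Spt}_K$, the discrete $K/H$-spectrum $(X_{fK})^H$ is fibrant in $\mathrm{Spt}_{K/H}$, so the canonical map $X^{hK} = (X_{fK})^K = ((X_{fK})^H)^{K/H} \rightarrow ((X_{fK})^H)^{hK/H}$ is a weak equivalence. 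Thus the theorem will follow once we build a $K/H$-equivariant weak equivalence of discrete $K/H$-spectra $(X_{fG})^H \overset{\simeq}{\longrightarrow} (X_{fK})^H$ and apply $(-)^{hK/H}$.

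To build this map, let $\widetilde{X} = (X_{fG})_{fK}$ be a fibrant replacement of $X_{fG}$ in $\mathrm{Spt}_K$. Since $X \rightarrow X_{fG} \rightarrow \widetilde{X}$ and $X \rightarrow X_{fK}$ are trivial cofibrations in $\mathrm{Spt}_K$ with fibrant target, there is a $K$-equivariant weak equivalence $\upsilon \: \widetilde{X} \overset{\simeq}{\longrightarrow} X_{fK}$ of fibrant discrete $K$-spectra, so that $\upsilon^H$ is a weak equivalence; the candidate is the composite $(X_{fG})^H \rightarrow \widetilde{X}^H \overset{\upsilon^H}{\longrightarrow} (X_{fK})^H$. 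The remaining point — and the main obstacle — is to show $(X_{fG})^H \rightarrow \widetilde{X}^H$ is a weak equivalence, and this is exactly where the hypothesis that $\psi(X)^K_H$ is a weak equivalence is used, by two applications of two-out-of-three. On the one hand, $(\widetilde{X})_{fH}$ is an $\mathrm{Spt}_H$-fibrant replacement of $X_{fG}$, and a lift shows $((\widetilde{X})_{fH})^H$ is identified with $((X_{fG})_{fH})^H$ compatibly with the maps out of $(X_{fG})^H$ and $\widetilde{X}^H$; hence the composite $(X_{fG})^H \rightarrow \widetilde{X}^H \rightarrow ((\widetilde{X})_{fH})^H$ is identified with $\psi(X)^G_H$, which is a weak equivalence because $X$ is hyperfibrant, and so $(X_{fG})^H \rightarrow \widetilde{X}^H$ is a weak equivalence as soon as $\widetilde{X}^H \rightarrow ((\widetilde{X})_{fH})^H$ is one. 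On the other hand, $\upsilon$ induces a weak equivalence $(\widetilde{X})_{fH} \overset{\simeq}{\longrightarrow} (X_{fK})_{fH}$ of fibrant discrete $H$-spectra, giving by naturality a commutative square in which $\upsilon^H$ and the induced map $((\widetilde{X})_{fH})^H \rightarrow ((X_{fK})_{fH})^H$ are weak equivalences and the right-hand vertical map is precisely $\psi(X)^K_H$; two-out-of-three then shows $\widetilde{X}^H \rightarrow ((\widetilde{X})_{fH})^H$ is a weak equivalence because $\psi(X)^K_H$ is, by hypothesis.

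Combining these, $(X_{fG})^H \overset{\simeq}{\longrightarrow} (X_{fK})^H$ is a $K/H$-equivariant weak equivalence of discrete $K/H$-spectra, so $((X_{fG})^H)^{hK/H} \overset{\simeq}{\longrightarrow} ((X_{fK})^H)^{hK/H}$; assembling this with the weak equivalences of the earlier paragraphs gives the zigzag
\[(X^{hH})^{hK/H} \overset{\simeq}{\longrightarrow} ((X_{fK})^H)^{hK/H} \overset{\simeq}{\longleftarrow} X^{hK} \overset{\simeq}{\longrightarrow} (X^{hH})^{h_\delta K/H},\]
which yields the asserted weak equivalence~(\ref{desired}) (inverting the single wrong-way arrow in the homotopy category, or replacing it by a functorial factorization if an explicit map is desired). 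The delicate point throughout is the two-out-of-three argument of the previous paragraph: one must track the various fibrant replacements in $\mathrm{Spt}_G$, $\mathrm{Spt}_K$, and $\mathrm{Spt}_H$ carefully so that the comparison maps really are $\psi(X)^G_H$ and $\psi(X)^K_H$, and so that all claimed equivariance is genuine.
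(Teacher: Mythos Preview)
Your proposal is correct and follows essentially the same route as the paper: both reduce to producing an equivalence between $((X_{fG})^H)^{hK/H}$ and $X^{hK}$, and both do so by comparing $(X_{fG})^H$ with $(X_{fK})^H$ via a two-out-of-three argument that pits the hyperfibrancy hypothesis $\psi(X)^G_H$ against the assumed weak equivalence $\psi(X)^K_H$. The paper's execution is slightly more direct in two places: it lifts $X_{fG} \to X_{fK}$ immediately in $\mathrm{Spt}_K$ (without your intermediate $\widetilde{X}=(X_{fG})_{fK}$), and it obtains a genuine map $(X^{hH})^{hK/H}\to X^{hK}$ by lifting $\mu^H$ through the trivial cofibration $(X_{fG})^H\to((X_{fG})^H)_{fK/H}$ and then applying $(-)^{K/H}$, thereby avoiding the wrong-way arrow in your final zigzag.
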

\begin{Rk} Let $G$ and $X$ be as in Theorem \ref{comparison}. 
By definition, if $X$ is a hyperfibrant discrete $K$-spectrum,  
then each map $\psi(X)^K_H$ is a weak equivalence (here, as usual,  
$H$ is any closed subgroup of $G$ that is normal in $K$), giving the 
weak equivalence of (\ref{desired}). 
We refer the reader to 
\cite[Sections 3,$\,$4]{iterated} 
for further discussion about hyperfibrancy. The spectral 
sequence considerations of \cite[pp. \negthinspace 2887--2888]{iterated} 
are helpful for 
understanding when $X$ is a hyperfibrant 
discrete $K$-spectrum.
\end{Rk}
\begin{proof}[Proof of Theorem \ref{comparison}.] 
It follows from Theorem \ref{rho},\,(ii) and Definition \ref{relatingthetwotypes} 
that $(\Psi_{_{\negthinspace K}})^K$ is a weak equivalence, so that, 
by composing with the weak equivalence of (\ref{secondhalf}), 
there is a weak equivalence 
\[X^{hK} \overset{\simeq}{\longrightarrow} 
\holim_\Delta \mathrm{Map}_c(G^\bullet, \widehat{X})^K 
\overset{\simeq}{\longrightarrow} (X^{hH})^{h_\delta K/H}.\] 
Therefore, to 
obtain (\ref{desired}), it suffices to show that 
there is a weak equivalence 
\[(X^{hH})^{hK/H} = ((X_{fG})^H)^{hK/H} 
\rightarrow X^{hK}.\]
\par
It is useful for the argument below 
to recall from the proof of \cite[Lemma 4.6]{iterated} 
that the functor $(-)^H \: \mathrm{Spt}_K \rightarrow \mathrm{Spt}_{K/H}$ 
is a right Quillen functor. 
\par
Since $X \rightarrow X_{fG}$ is a trivial cofibration in 
$\mathrm{Spt}_K$ and the map $X \rightarrow 
X_{fK}$ has a fibrant target, there is a weak equivalence 
\[\mu \: X_{fG} \overset{\simeq}{\longrightarrow} X_{fK}\] in 
$\mathrm{Spt}_K$. 
Thus, there is a commutative diagram 
\[\xymatrix@+.1in{((X_{fG})^H)_{fK/H} \ar[dr]_{\widetilde{\,\mu^H\,}} 
& (X_{fG})^H \ar[l]^-{\simeq}_-{\eta}  
\ar[d]^-{\mu^H} 
\ar[r]^-{\psi(X)^G_H}_-{\simeq} & ((X_{fG})_{fH})^H 
\ar[d]^-{(\mu_{_{fH}})^H}_-{\simeq} \\
& (X_{fK})^H \ar[r]^-{\psi(X)^K_H} & 
((X_{fK})_{fH})^H.}\] 
The map $\widetilde{\,\mu^H\,}$ 
exists in $\mathrm{Spt}_{K/H}$, because in $\mathrm{Spt}_{K/H}$ 
the map $\eta$ is a trivial 
cofibration and $(X_{fK})^H$ is fibrant. Also, as noted in the diagram, 
the map $\psi(X)^G_H$ is a 
weak equivalence, 
since $X$ is a hyperfibrant discrete $G$-spectrum, and 
the rightmost vertical map, $(\mu_{fH})^H = \mu^{hH}$, 
is a weak equivalence, 
since $\mu$ is a weak equivalence in 
$\mathrm{Spt}_H$.
%
\par
Now suppose that $\psi(X)^K_H$ is a weak equivalence. 
Then $\mu^H$ is a weak equivalence, and hence, 
$\widetilde{\,\mu^H\,}$ is a weak equivalence 
between fibrant objects in $\mathrm{Spt}_{K/H}$. Therefore, there is 
a weak equivalence 
\[{\Bigl(\widetilde{\,\mu^H\,}\Bigr)}^{K/H} \: 
((X_{fG})^H)^{hK/H} = (((X_{fG})^H)_{fK/H})^{K/H} 
\overset{\simeq}{\longrightarrow} ((X_{fK})^H)^{K/H} = X^{hK},\] 
completing the proof. 
\end{proof}
\section{Delta-discrete homotopy fixed points are 
always\\ discrete homotopy fixed points}\label{modelcategories}
Let $G$ be any profinite group. 
In this final section, we obtain in two different ways 
the conclusion stated in the section title. Somewhat 
interestingly, the discrete homotopy fixed points, 
in both cases, are those of a discrete $G$-spectrum 
that, in general, need not be equivalent to the delta-discrete $G$-spectrum 
(whose delta-discrete homotopy fixed points are under consideration).
\par
Let $\holim_\Delta X^\bullet$ be any delta-discrete $G$-spectrum 
and define 
\[C(X^\bullet) := \colim_{N \vartriangleleft_o G} \, 
\bigl(\holim_{[n] \in \Delta} (X^n)_{fG}\bigr)^N.\] Notice that 
$C(X^\bullet)$ is a discrete $G$-spectrum and 
\[(\holim_\Delta X^\bullet)^{h_\delta G} \cong 
\bigl(\holim_{[n] \in \Delta} (X^n)_{fG}\bigr)^G 
\cong (C(X^\bullet))^G \overset{\simeq}{\longrightarrow} 
(C(X^\bullet))^{hG},\] where the justification 
for the second isomorphism above 
is as in \cite[proof of Theorem 2.3]{fibrantmodel} 
and the weak equivalence is due to the fact that the spectrum
$C(X^\bullet)$ is a fibrant discrete 
$G$-spectrum (by \cite[Corollary 2.4]{fibrantmodel}). Also, notice that 
the weak equivalence 
\begin{equation}\label{equivalenceone}\zig 
(\holim_\Delta X^\bullet)^{h_\delta G} \overset{\simeq}{\longrightarrow} 
(C(X^\bullet))^{hG},
\end{equation} defined above, 
is partly induced by the canonical 
map $\iota_{_G}$ (defined by a colimit of inclusions 
of fixed points) in the
zigzag 
\begin{equation}\label{zigzagone}\zig
C(X^\bullet) \overset{\iota_{_G}}{\longrightarrow} 
\holim_{[n] \in \Delta} (X^n)_{fG} 
\overset{\simeq}{\longleftarrow} \holim_\Delta X^\bullet
\end{equation} of $G$-equivariant maps. In (\ref{zigzagone}), 
the second map, as indicated, 
is a weak equivalence of delta-discrete $G$-spectra.
\par
Though (\ref{equivalenceone}) shows that the delta-discrete 
homotopy fixed points of any delta-discrete $G$-spectrum can 
be realized as the discrete homotopy fixed points of a discrete 
$G$-spectrum, there is a slight incongruity here: the map $\iota_{_G}$ 
in (\ref{zigzagone}) does not have to be a weak equivalence. For 
example, if $\iota_{_G}$ is a weak equivalence, then 
$\pi_0(\holim_\Delta X^\bullet)$ is a discrete $G$-module 
(since $\pi_0(C(X^\bullet))$ is a discrete $G$-module), but 
this is not the case, for example, when $\holim_\Delta X^\bullet$ 
is the delta-discrete $\mathbb{Z}_q$-spectrum $Y^{h\mathbb{Z}/p}$ 
(this characterization of $Y^{h\mathbb{Z}/p}$ is obtained by applying 
Lemma \ref{stepone}) 
that was referred to in the Introduction, since $\pi_0(Y^{h\mathbb{Z}/p})$ 
is not a discrete $\mathbb{Z}_q$-module (by \cite[Appendix A]{iterated}). 
\par
Now we consider a second and more interesting way to realize 
$(\holim_\Delta X^\bullet)^{h_\delta G}$ as a discrete homotopy 
fixed point spectrum. As in Section \ref{properties}, let $(X^\bullet)_\fibrant$ 
denote the fibrant replacement of $X^\bullet$ in the model category 
$c(\cat)$. Then there is again a zigzag 
\begin{equation}\label{zigzagtwo}\zig
\lim_\Delta (X^\bullet)_\fibrant 
\overset{\lambda}{\longrightarrow}
\holim_\Delta (X^\bullet)_\fibrant 
\underset{\overset{\scriptstyle{\gamma}}{}}{\overset{\simeq}{\longleftarrow}}
\holim_\Delta X^\bullet 
\end{equation} of canonical 
$G$-equivariant maps, where $\lambda$ is the usual 
map in $\mathrm{Spt}$ 
from the limit to the homotopy limit and 
the map 
$\gamma$ is a weak equivalence of delta-discrete $G$-spectra. 
We will show that, as in the case of zigzag 
(\ref{zigzagone}), $\lim_\Delta 
(X^\bullet)_\fibrant$ is a discrete $G$-spectrum and its 
discrete homotopy 
fixed points are 
equivalent to $(\holim_\Delta X^\bullet)^{h_\delta G}$, 
but, as before, the map $\lambda$ need not be a weak equivalence. 
\begin{Lem}\label{finallemmaone}
Let $\smash{\displaystyle{\holim_\Delta}} \, 
X^\bullet$ be a delta-discrete $G$-spectrum. 
Then $\smash{\displaystyle{\lim_\Delta}} \, 
(X^\bullet)_\fibrant$ is a discrete $G$-spectrum.
\end{Lem}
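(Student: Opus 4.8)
The plan is to compute the limit in each simplicial degree and thereby reduce the statement to an elementary fact about discrete $G$-sets. Recall first that the limit $\lim_\Delta (X^\bullet)_\fibrant$ is, as throughout the paper, formed in $\spt$; since limits of spectra are computed in $\mathbf{Set}$ in each simplicial degree $(-)_{k,l}$, and since $(X^\bullet)_\fibrant$ is an object of $\ccat$ — so that each $((X^\bullet)_\fibrant)^n$ is a discrete $G$-spectrum — there is, for all $k,l \geq 0$, a natural isomorphism of $G$-sets
\[
\bigl(\lim_\Delta (X^\bullet)_\fibrant\bigr)_{k,l} \;\cong\; \lim_{[n] \in \Delta} \bigl(((X^\bullet)_\fibrant)^n\bigr)_{k,l},
\]
with $G$ acting diagonally. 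Hence it suffices to show that the limit over $\Delta$ of a cosimplicial discrete $G$-set is again a discrete $G$-set.

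Next I would invoke the standard description of a cosimplicial limit as an equalizer: for any cosimplicial $G$-set $Z^\bullet$, the projection to the $0$-cosimplices identifies $\lim_\Delta Z^\bullet$ with the equalizer $\mathrm{eq}\bigl(Z^0 \rightrightarrows Z^1\bigr)$ of the two coface maps, and in particular exhibits $\lim_\Delta Z^\bullet$ as (isomorphic to) a $G$-stable subset of $Z^0$. So the proof comes down to the observation that a $G$-stable subset $W$ of a discrete $G$-set $D$ is itself discrete: if $w \in W$, then, as $D$ is discrete, $w$ is fixed by some open normal subgroup $N$ of $G$, whence $w \in W^N$; thus $W = \bigcup_{N \vartriangleleft_o G} W^N$. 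Applying this with $D = \bigl(((X^\bullet)_\fibrant)^0\bigr)_{k,l}$ shows that each $\bigl(\lim_\Delta (X^\bullet)_\fibrant\bigr)_{k,l}$ is a discrete $G$-set, and therefore $\lim_\Delta (X^\bullet)_\fibrant$ is a discrete $G$-spectrum.

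I expect the only genuine subtlety — and the reason the lemma requires an argument at all — to lie in the fact that the limit is taken in $\spt$ rather than in $\cat$, so one cannot simply appeal to the completeness of $\cat$; moreover one should not expect an arbitrary (infinite) $\Delta$-indexed limit of discrete $G$-spectra to be discrete, since infinite products of discrete $G$-spectra are in general not discrete. The content is precisely that $\lim_\Delta$, though an infinite limit of the cosimplicial diagram, is computed as a finite limit — equivalently, realized in each simplicial degree as a $G$-stable subset of $Z^0$ — and $G$-stable subsets of discrete $G$-sets remain discrete. Once the degreewise reduction and the equalizer description are in place, the remaining verification is routine.
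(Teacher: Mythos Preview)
Your proposal is correct and follows essentially the same approach as the paper: both reduce $\lim_\Delta$ to the equalizer of the two coface maps $d^0, d^1 \colon (-)^0 \rightrightarrows (-)^1$ and then verify that this equalizer, formed in $\spt$, is already a discrete $G$-spectrum. Your finishing argument---that a $G$-stable subset of a discrete $G$-set is discrete---is a slightly more elementary phrasing of the paper's, which instead shows via a chain of isomorphisms (commuting the filtered colimit $\colim_{N \vartriangleleft_o G}$ past the equalizer and past fixed points) that the equalizer in $\spt$ agrees with the equalizer taken in $\cat$.
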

\begin{proof}
Given two morphisms 
$\negthinspace \xymatrix@C=15pt{Y_0 \ar@<.8ex>[r] 
\ar@<-.6ex>[r] & Y_1}\negthinspace$ in $\cat$, 
let $\mathrm{equal}[\xymatrix@C=15pt{Y_0 \ar@<.8ex>[r] 
\ar@<-.6ex>[r] & Y_1}\negthinspace]$ 
denote the equalizer in $\mathrm{Spt}$.
Also, let 
$\mathrm{equal}_G[\xymatrix@C=15pt{Y_0 \ar@<.8ex>[r] 
\ar@<-.6ex>[r] & Y_1}\negthinspace]$ 
denote the equalizer in $\cat$. 
Due to the fact that 
$X^\bullet$ and $(X^\bullet)_\fibrant$ are cosimplicial 
discrete $G$-spectra, to prove the lemma it 
suffices to show that 
$\lim_\Delta X^\bullet$ is a discrete $G$-spectrum. 
\par
Since limits in 
$\mathrm{Spt}$ are formed levelwise 
in simplicial sets and, 
given a cosimplicial simplicial set $Z^\bullet$, there is a natural 
isomorphism $\lim_\Delta Z^\bullet \overset{\cong}{\longrightarrow} 
Z^{-1}$, where 
$Z^{-1}$ is the equalizer of the coface maps 
$\negthinspace\negthinspace\xymatrix@C=20pt{Z^0 
\ar@<.6ex>[r]^-{\scriptscriptstyle{d^0}} 
\ar@<-.8ex>[r]_-{\scriptscriptstyle{d^1}} & Z^1 \negthinspace\negthinspace}$ 
(see, for example, 
\cite[Lemma 1]{cosimp3}), it follows that the 
canonical $G$-equivariant map 
\[\displaystyle{\lim_\Delta X^\bullet \overset{\cong}{\longrightarrow} 
\mathrm{equal}\Bigl[\negthinspace\xymatrix{X^0
\ar@<.6ex>[r]^-{d^0} 
\ar@<-.8ex>[r]_-{d^1} & X^1}\negthinspace\Bigr]}\] 
is an isomorphism in $\mathrm{Spt}$. 
\par
The proof is completed by noting that there are isomorphisms
\begin{align*}
\mathrm{equal}_G\Bigl[\negthinspace\xymatrix{X^0 
\ar@<.6ex>[r]^-{d^0} 
\ar@<-.8ex>[r]_-{d^1} & X^1}\negthinspace \Bigr] & \cong 
\colim_{N \vartriangleleft_o G} 
\, \Bigl(\mathrm{equal}
\Bigl[\negthinspace\xymatrix{X^0 
\ar@<.6ex>[r]^-{d^0} 
\ar@<-.8ex>[r]_-{d^1} & 
X^1}\negthinspace\Bigr]
\Bigr)^{\negthinspace N} \\
& \cong \mathrm{equal}\biggl[\, \colim_{N \vartriangleleft_o G} 
\negthinspace \negthinspace 
\xymatrix{(X^0)^N
\ar@<.6ex>[r]^-{d^0} 
\ar@<-.8ex>[r]_-{d^1} &{}} \negthinspace \negthinspace 
\colim_{N \vartriangleleft_o G} 
(X^1)^N\biggr] \\
& \cong 
\mathrm{equal}\Bigl[\negthinspace\xymatrix{X^0 
\ar@<.6ex>[r]^-{d^0} 
\ar@<-.8ex>[r]_-{d^1} & 
X^1}\negthinspace\Bigr] \\
& \cong \lim_\Delta X^\bullet,
\end{align*} 
with each isomorphism $G$-equivariant. 
In this string 
of isomorphisms, the first one is because of \cite[Remark 4.2]{cts} 
and the third one follows from the fact that 
$X^0$ and $X^1$ 
are discrete $G$-spectra.
\end{proof} 
\begin{Lem}\label{finallemmatwo}
Let $\smash{\displaystyle{\holim_\Delta}} 
\, X^\bullet$ be a delta-discrete $G$-spectrum. 
Then there is an equivalence 
\[(\lim_\Delta (X^\bullet)_\fibrant)^{hG} \simeq 
(\holim_\Delta X^\bullet)^{h_\delta G}.\]
\end{Lem}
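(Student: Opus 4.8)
The plan is to reduce the left-hand side to honest $G$-fixed points and then to feed the result into the right-derived-functor machinery of Section \ref{properties}. The first thing to establish is that $\lim_\Delta (X^\bullet)_\fibrant$ is not merely a discrete $G$-spectrum (which is Lemma \ref{finallemmaone}) but a \emph{fibrant} one. For this I would note that the functor $\cat \rightarrow \ccat$ sending an object to the constant cosimplicial object on it preserves cofibrations and weak equivalences, since in the injective model structure on $\ccat$ both classes are detected in each codegree; hence this functor is left Quillen, and therefore its right adjoint $\lim_\Delta \colon \ccat \rightarrow \cat$ is right Quillen and sends the fibrant object $(X^\bullet)_\fibrant$ to a fibrant object of $\cat$. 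By Lemma \ref{finallemmaone} the limit computed in $\cat$ coincides with the one computed in $\spt$, so $\lim_\Delta (X^\bullet)_\fibrant$ is a fibrant discrete $G$-spectrum.

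With this in hand, two formal observations finish the reduction. First, the trivial cofibration $\lim_\Delta (X^\bullet)_\fibrant \rightarrow (\lim_\Delta (X^\bullet)_\fibrant)_{fG}$ is a weak equivalence between fibrant objects of $\cat$, and applying the right Quillen functor $(-)^G \colon \cat \rightarrow \spt$ (the right adjoint of the trivial-action functor $t$) gives a weak equivalence
\[(\lim_\Delta (X^\bullet)_\fibrant)^G \overset{\simeq}{\longrightarrow} (\lim_\Delta (X^\bullet)_\fibrant)^{hG},\]
exactly as in the reasoning for $C(X^\bullet)$ that precedes Lemma \ref{finallemmaone} (see \cite[Corollary 2.4]{fibrantmodel}). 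Second, since $(-)^G$ is a right adjoint it commutes with limits, giving a natural isomorphism $(\lim_\Delta (X^\bullet)_\fibrant)^G \cong \lim_\Delta ((X^\bullet)_\fibrant)^G$, the latter limit formed in $\spt$.

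It then only remains to splice in Theorem \ref{derivedfunctor}, which, on unwinding the definition of $\mathbf{R}(\lim_\Delta (-)^G)$, provides weak equivalences $(\holim_\Delta X^\bullet)^{h_\delta G} \overset{\simeq}{\longrightarrow} \holim_\Delta ((X^\bullet)_\fibrant)^G$ and (via Theorem \ref{derived}) $\lim_\Delta ((X^\bullet)_\fibrant)^G \overset{\simeq}{\longrightarrow} \holim_\Delta ((X^\bullet)_\fibrant)^G$. Concatenating these with the two maps of the previous paragraph yields the zigzag of weak equivalences
\[(\lim_\Delta (X^\bullet)_\fibrant)^{hG}
\overset{\simeq}{\longleftarrow} (\lim_\Delta (X^\bullet)_\fibrant)^G
\cong \lim_\Delta ((X^\bullet)_\fibrant)^G
\overset{\simeq}{\longrightarrow} \holim_\Delta ((X^\bullet)_\fibrant)^G
\overset{\simeq}{\longleftarrow} (\holim_\Delta X^\bullet)^{h_\delta G},\]
which is the asserted equivalence. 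The one step that is more than bookkeeping is the fibrancy of $\lim_\Delta (X^\bullet)_\fibrant$ in $\cat$; once that is known, everything else is formal, resting only on the facts that fixed points and limits are both right adjoints and on the derived-functor description of $(-)^{h_\delta G}$ from Section \ref{properties}.
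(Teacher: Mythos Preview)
Your proof is correct and follows essentially the same route as the paper's: both establish that $\lim_\Delta (X^\bullet)_\fibrant$ is fibrant in $\cat$ via the Quillen adjunction between the constant-diagram functor and the limit, deduce $(\lim_\Delta (X^\bullet)_\fibrant)^G \simeq (\lim_\Delta (X^\bullet)_\fibrant)^{hG}$, identify $G$-fixed points of the limit with the limit of $G$-fixed points, and then invoke Theorems \ref{derived} and \ref{derivedfunctor} to reach $(\holim_\Delta X^\bullet)^{h_\delta G}$. The only cosmetic difference is that the paper introduces the notation $\lim_\Delta^G$ for the limit in $\cat$ and records explicitly the isomorphism $\lim_\Delta (X^\bullet)_\fibrant \cong \lim_\Delta^G (X^\bullet)_\fibrant$, whereas you fold this into your appeal to Lemma \ref{finallemmaone}; the content is the same.
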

\begin{proof} 
Let $\displaystyle{\lim_\Delta}^G \,Y^\bullet$ denote the limit in $\cat$ of an 
object $Y^\bullet \in c(\cat)$. 
Since $\displaystyle{\lim_\Delta} (X^\bullet)_\fibrant \in \cat$, there 
are isomorphisms
\begin{equation}\label{thereareisos}\zig
\lim_\Delta (X^\bullet)_\fibrant \cong \colim_{N \vartriangleleft_o G} 
\bigl(\lim_\Delta (X^\bullet)_\fibrant\bigr)^N \cong 
{\lim_\Delta}^G (X^\bullet)_\fibrant
\end{equation}
in $\cat$. 
Since the functor $\cat \rightarrow c(\cat)$ that sends a discrete 
$G$-spectrum $Y$ to $\mathrm{cc}^\bullet(Y)$ 
(equipped with its natural $G$-action) preserves weak equivalences 
and cofibrations, its right adjoint 
${\displaystyle{\lim_\Delta}}^G (-) \: c(\cat) \rightarrow 
\cat$ preserves fibrant objects, and hence, 
$\displaystyle{\lim_\Delta}^G (X^\bullet)_\fibrant$ is a fibrant 
discrete $G$-spectrum, and hence, the canonical map 
\[({\lim_\Delta}^G (X^\bullet)_\fibrant)^G 
\overset{\simeq}\longrightarrow 
({\lim_\Delta}^G (X^\bullet)_\fibrant)^{hG} \cong 
(\lim_\Delta (X^\bullet)_\fibrant)^{hG}\] is 
a weak equivalence. 
\par
The proof is completed by the zigzag of equivalences
\[({\lim_\Delta}^G (X^\bullet)_\fibrant)^G 
\cong 
\lim_\Delta ((X^\bullet)_\fibrant)^G \overset{\simeq}{\longrightarrow} 
\holim_\Delta ((X^\bullet)_\fibrant)^G 
\overset{\simeq}{\longleftarrow} (\holim_\Delta X^\bullet)^{h_\delta G},\]
where the isomorphism uses (\ref{thereareisos}), the 
first weak equivalence is by Theorem \ref{derived}, and 
the second weak equivalence applies Theorem \ref{derivedfunctor}.
\end{proof}
\par
The argument that was used earlier to show that $\iota_{_G}$ in 
zigzag (\ref{zigzagone}) does not have 
to be a weak equivalence also applies to show that 
the map $\lambda$ in zigzag (\ref{zigzagtwo}) 
does not have to be a weak equivalence.
\begin{Rk}\label{forgetfulfunctor}
Notice that if $(X^\bullet)_\fibrant$ is fibrant in $c(\mathrm{Spt})$ 
(equipped with the injective model structure), then, 
as in the proof of Theorem \ref{derived}, the map $\lambda$ is a 
weak equivalence. Thus, interestingly, 
fibrations in $c(\cat)$ are not necessarily 
fibrations in $c(\mathrm{Spt})$ -- even though any fibration in $\cat$ 
is a fibration in $\mathrm{Spt}$, by \cite[Lemma 3.10]{cts}. 
This observation is closely related to the fact
that the forgetful functor 
$U \: \cat \rightarrow \mathrm{Spt}$ need not be a right 
adjoint (see \cite[Section 3.6]{joint} for a discussion of this fact): 
if one supposes that $U$ is a right adjoint, then $U$ is also 
a right Quillen functor, since 
$U$ preserves fibrations and weak equivalences, 
and hence, by 
\cite[Remark A.2.8.6]{luriebook}, the forgetful functor 
$U \circ (-) \: c(\cat) \rightarrow c(\mathrm{Spt})$ 
preserves fibrations.
\par
Let $V$ be an open subgroup of $G$. 
By \cite[proof of Lemma 3.1]{iterated}, the right adjoint 
$\mathrm{Res}_G^V \: \cat \rightarrow \mathrm{Spt}_V$ that 
regards a discrete $G$-spectrum as a discrete $V$-spectrum 
is a right Quillen functor, and hence (again, by \cite[Remark 
A.2.8.6]{luriebook}), the restriction functor $\mathrm{Res}_G^V 
\circ (-) \: c(\cat) \rightarrow c(\mathrm{Spt}_V)$ preserves fibrations. 
Thus, \[(X^\bullet)_\fibrant \cong \colim_{N \vartriangleleft_o G} 
((X^\bullet)_\fibrant)^N\] is the filtered colimit of cosimplicial spectra 
$((X^\bullet)_\fibrant)^N$, each of which is fibrant in $c(\mathrm{Spt})$, 
since $\mathrm{Res}_G^N \circ (X^\bullet)_\fibrant = 
(X^\bullet)_\fibrant$ is fibrant in $c(\mathrm{Spt}_N)$ and, as in 
the proof of Theorem \ref{derived}, 
the functor $(-)^N \: c(\mathrm{Spt}_N) \rightarrow c(\mathrm{Spt})$ 
preserves fibrant objects. Since there are cases 
where $(X^\bullet)_\fibrant$ is 
not fibrant in $c(\mathrm{Spt})$, we can conclude, 
perhaps somewhat surprisingly, that, unlike in $\mathrm{Spt}$, 
a filtered colimit of fibrant objects in $c(\mathrm{Spt})$ does not 
have to be 
fibrant. This also shows that, though $c(\mathrm{Spt})$ 
is cofibrantly generated (by \cite[Proposition~A.2.8.2]{luriebook}), 
it is not weakly finitely generated (see 
\cite[Definition~3.4,~Lemma 3.5]{Dundascolim}). 
\end{Rk}
\par
A priori, we do not expect a delta-discrete $G$-spectrum, in general, 
to be equivalent to a discrete $G$-spectrum, and the fact that 
each of 
two natural ways to realize an arbitrary delta-discrete homotopy 
fixed point spectrum as a discrete homotopy fixed point spectrum fails 
to come from such a general equivalence is consistent 
with our expectation.
\par
The following result shows 
that zigzags (\ref{zigzagone}) and (\ref{zigzagtwo}) 
are, in fact, directly related to each other (beyond just having the 
``same general structure").
\begin{Thm}\label{finallemmathree}
Let $\holim_\Delta X^\bullet$ be any delta-discrete $G$-spectrum. 
Then the map $\iota_{_G}$ in (\ref{zigzagone}) is a weak equivalence 
if and only if the map $\lambda$ in (\ref{zigzagtwo}) is a weak 
equivalence. 
\end{Thm}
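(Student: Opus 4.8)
The plan is to bridge zigzags (\ref{zigzagone}) and (\ref{zigzagtwo}) by a single comparison map and then read off the equivalence from the two-out-of-three property. For a $G$-spectrum $Z$, write $d(Z) = \colim_{N \vartriangleleft_o G} Z^N$ for its largest discrete $G$-subspectrum, i.e. the value at $Z$ of the right adjoint to the inclusion of $\cat$ into the category of $G$-spectra, and let $\epsilon_Z \: d(Z) \to Z$ denote the counit (the inclusion). Unwinding the definitions, $C(X^\bullet) = d\bigl(\holim_\Delta (X^\bullet)_{fG}\bigr)$ and the map $\iota_{_G}$ in (\ref{zigzagone}) is exactly $\epsilon_{\holim_\Delta (X^\bullet)_{fG}}$. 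On the other hand, by Lemma \ref{finallemmaone} the spectrum $\lim_\Delta (X^\bullet)_\fibrant$ is already a discrete $G$-spectrum, so the $G$-equivariant map $\lambda$ in (\ref{zigzagtwo}) factors, via this adjunction, as
\[\lim_\Delta (X^\bullet)_\fibrant \xrightarrow{\widetilde{\lambda}} d\bigl(\holim_\Delta (X^\bullet)_\fibrant\bigr) \xrightarrow{\epsilon} \holim_\Delta (X^\bullet)_\fibrant,\]
where $\epsilon = \epsilon_{\holim_\Delta (X^\bullet)_\fibrant}$; moreover, since $(-)^N$ preserves limits and homotopy limits, $\widetilde{\lambda}$ is the filtered colimit over $N \vartriangleleft_o G$ of the canonical maps $\lim_\Delta ((X^\bullet)_\fibrant)^N \to \holim_\Delta ((X^\bullet)_\fibrant)^N$.

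First I would produce a comparison map $\theta^\bullet \: (X^\bullet)_{fG} \to (X^\bullet)_\fibrant$ in $c(\cat)$ under $X^\bullet$: since $X^\bullet \to (X^\bullet)_{fG}$ is an objectwise (hence injective) trivial cofibration and $(X^\bullet)_\fibrant$ is fibrant in $c(\cat)$, such a $\theta^\bullet$ exists, and it is an objectwise weak equivalence (by two-out-of-three) between objectwise-fibrant cosimplicial discrete $G$-spectra. Applying $\holim_\Delta$ gives a $G$-equivariant weak equivalence of delta-discrete $G$-spectra, $\Theta = \holim_\Delta \theta^\bullet \: \holim_\Delta (X^\bullet)_{fG} \overset{\simeq}{\longrightarrow} \holim_\Delta (X^\bullet)_\fibrant$, and applying $d$ gives $d(\Theta) \: C(X^\bullet) \to d\bigl(\holim_\Delta (X^\bullet)_\fibrant\bigr)$; by naturality of $\epsilon$ one has $\epsilon \circ d(\Theta) = \Theta \circ \iota_{_G}$. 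Granting that $d(\Theta)$ and $\widetilde{\lambda}$ are weak equivalences, the proof concludes formally: $\iota_{_G}$ is a weak equivalence if and only if $\Theta \circ \iota_{_G} = \epsilon \circ d(\Theta)$ is, if and only if $\epsilon$ is (because $\Theta$ and $d(\Theta)$ are weak equivalences); and $\lambda = \epsilon \circ \widetilde{\lambda}$ is a weak equivalence if and only if $\epsilon$ is (because $\widetilde{\lambda}$ is a weak equivalence); hence $\iota_{_G}$ is a weak equivalence if and only if $\lambda$ is.

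So the real work is to check that $d(\Theta)$ and $\widetilde{\lambda}$ are weak equivalences, and for both I would commute everything past the honest fixed-point functors $(-)^N$ and use that filtered colimits of spectra preserve weak equivalences. For $d(\Theta)$: since $(-)^N$ commutes with $\holim_\Delta$, we have $d(\Theta) = \colim_{N \vartriangleleft_o G} \holim_\Delta (\theta^\bullet)^N$; the functor $(-)^N \: \cat \to \mathrm{Spt}_{G/N}$ is right Quillen (recalled from \cite[Lemma 4.6]{iterated}), so it carries each $\theta^n$ — a weak equivalence between the fibrant objects $(X^n)_{fG}$ and $((X^\bullet)_\fibrant)^n$ of $\cat$ — to a weak equivalence between objects fibrant in $\mathrm{Spt}_{G/N}$, hence fibrant in $\spt$ by \cite[Lemma 3.10]{cts}; thus $(\theta^\bullet)^N$ is an objectwise weak equivalence of objectwise-fibrant cosimplicial spectra, each $\holim_\Delta (\theta^\bullet)^N$ is a weak equivalence, and so is the filtered colimit $d(\Theta)$. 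For $\widetilde{\lambda}$: by Remark \ref{forgetfulfunctor}, each $((X^\bullet)_\fibrant)^N$ is fibrant in $c(\mathrm{Spt})$ with its injective model structure, so the argument in the proof of Theorem \ref{derived} shows that $\lim_\Delta ((X^\bullet)_\fibrant)^N \to \holim_\Delta ((X^\bullet)_\fibrant)^N$ is a weak equivalence; taking the filtered colimit over $N$, $\widetilde{\lambda}$ is a weak equivalence.

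The homotopical input above is routine once Remark \ref{forgetfulfunctor} and the right Quillen property of $(-)^N$ are in hand, so I expect the only real obstacle to be the bookkeeping of the first two paragraphs: verifying precisely that $C(X^\bullet) = d(\holim_\Delta (X^\bullet)_{fG})$ with $\iota_{_G}$ the counit, that the adjoint factorization of $\lambda$ is indeed the filtered colimit of the level-$N$ comparison maps, and that $(-)^N$ commutes with the relevant limits and homotopy limits while carrying fibrant objects to fibrant objects at each stage — after which the diagram chase is immediate.
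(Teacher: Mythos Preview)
Your proposal is correct and is essentially the paper's own proof in adjunction language: your functor $d(-)$ is exactly what the paper writes as $\holim_\Delta^G$ (and, for the limit, $\lim_\Delta^G$), your $\epsilon$ is the middle vertical map in the paper's commutative diagram, your $\widetilde{\lambda}$ is the paper's $\lambda'_G$ (shown to be a weak equivalence in Lemma~\ref{injectivebehavior} by precisely the argument you give), your comparison $\theta^\bullet$ is the paper's $\widetilde{\gamma}$, and your $d(\Theta)$ is the paper's $\widehat{\ \widetilde{\gamma}\ }_{\negthinspace G}$. The only cosmetic difference is that the paper assembles these maps into a single $2\times 3$ commutative rectangle and reads off the conclusion, whereas you run the same two-out-of-three argument through the factorization $\lambda = \epsilon \circ \widetilde{\lambda}$ and the naturality square $\epsilon \circ d(\Theta) = \Theta \circ \iota_{_G}$.
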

\begin{proof}
Let $\holim_\Delta^G Y^\bullet$ denote the 
homotopy limit in $\cat$ of $Y^\bullet$, a cosimplicial discrete 
$G$-spectrum. Then, by \cite[Theorem 2.3]{fibrantmodel} and because 
homotopy limits are ends and thereby 
only unique up to isomorphism, 
there is the identity
\[{\holim_\Delta}^G \, Y^\bullet = \colim_{N \vartriangleleft_o G} 
(\holim_\Delta Y^\bullet)^N,\] which implies 
that $C(X^\bullet) = \holim_{[n] \in \Delta}^G (X^n)_{fG}.$ 
Similarly, there is the 
identity \[{\lim_\Delta}^G (X^\bullet)_\fibrant = 
\colim_{N \vartriangleleft_o G} (\lim_\Delta (X^\bullet)_\fibrant)^N.\] 
Also, 
since the map 
$\{X^n\}_{[n] \in \Delta} \rightarrow \{(X^n)_{fG}\}_{[n] \in \Delta}$ is a 
trivial cofibration in $c(\cat)$, there is a 
weak equivalence \[\widetilde{\gamma} \: 
\{(X^n)_{fG}\}_{[n] \in \Delta} \rightarrow 
(X^\bullet)_\fibrant\] in $c(\cat)$.
\par
Given the above facts, there is the commutative diagram 
\[\xymatrix@C=17pt@R=22pt{\smash{\displaystyle{\lim_\Delta}} \, 
(X^\bullet)_\fibrant 
\ar[r]^-\lambda & 
\smash{\displaystyle{\holim_\Delta}} \, 
(X^\bullet)_\fibrant & 
\smash{\displaystyle{\holim_{[n] \in \Delta}}} \, (X^n)_{fG} 
\ar[l]_-\simeq^-{\widehat{ \ \widetilde{\gamma} \ }} 
\\ \smash{\displaystyle{\lim_\Delta}}^G 
(X^\bullet)_\fibrant \ar[u]^-\cong 
\ar[r]^-\simeq_-{\lambda'_G} & 
\smash{\displaystyle{\holim_\Delta}}^G 
(X^\bullet)_\fibrant \ar[u] 
& \smash{\displaystyle{\holim_{[n] \in \Delta}}}^G 
(X^n)_{fG} \ar[l]_-\simeq^-{\widehat{\ \widetilde{\gamma} 
\ }_{\negthinspace \negthinspace \negthinspace G}} 
\ar[u]_-{\iota_{_G}}}\] 
of canonical maps: 
each vertical map is induced by inclusions of fixed points; 
the map $\widehat{ \ \widetilde{\gamma} \ }$ is a weak equivalence (of 
delta-discrete $G$-spectra) because, in $c(\mathrm{Spt})$, 
$\widetilde{\gamma}$ is an 
objectwise weak equivalence between objectwise fibrant 
diagrams; the leftmost vertical map is an 
isomorphism thanks to Lemma \ref{finallemmaone}; 
the proof of Lemma \ref{injectivebehavior} below implies that 
$\lambda'_G$ is a weak equivalence; 
and, in $c(\cat)$, the map $\widetilde{\gamma}$ is an 
objectwise weak equivalence between objectwise fibrant 
diagrams, so that, by 
\cite[Theorem 18.5.3, (2)]{Hirschhorn}, 
$\widehat{\ \widetilde{\gamma} 
\ }_{\negthinspace \negthinspace \negthinspace G}$ 
is a weak equivalence.
\par
The desired conclusion follows immediately from this diagram.
\end{proof}
\par
Lemma \ref{injectivebehavior} below, whose justification is used in 
the proof of 
the previous result, is an example of the statement 
that if $\mathcal{C}$ is a small category and $\mathcal{M}$ is a 
combinatorial simplicial model category, 
with $M$ a fibrant object in 
the category $\mathcal{M}^\mathcal{C}$ of $\mathcal{C}$-shaped 
diagrams in $\mathcal{M}$, when $\mathcal{M}^\mathcal{C}$ is 
equipped with the injective model 
structure, then the canonical morphism $\lim_\mathcal{C} M \rightarrow 
\holim_\mathcal{C} M$ (defined as in 
\cite[Example 18.3.8,~(2)]{Hirschhorn}) 
is a weak equivalence in $\mathcal{M}.$ However, 
since the author is not able to point to a place in the 
literature that has an explicit proof of 
this particular statement, we give the details for the 
case that we need.
\begin{Rk}\label{holimremark}
The author would like to qualify his comment related to 
the literature on homotopy limits that is in the last 
sentence above. Let $\mathcal{N}$ 
be a simplicial model category such that, for some 
small category $\mathcal{D}$, the diagram 
category $\mathcal{N}^\mathcal{D}$ has an 
injective 
model structure, with $N$ fibrant in $\mathcal{N}^\mathcal{D}$. 
Then the statement 
that ``the canonical morphism 
$\lim_\mathcal{D} N \rightarrow \holim_\mathcal{D} N$ is a weak 
equivalence in $\mathcal{N} \, "$ should follow in an essentially formal 
way from \cite[Corollary 18.4.2,~(1)]{Hirschhorn} as its ``model 
category dual," but verifying 
this requires some care and the author has 
not checked all the details (when doing this, an application 
of \cite[Corollary 18.4.5,~(2)]{Hirschhorn} is helpful).
\par
Also, to place the above statement in a more 
general and useful framework, see \cite{DHKS}. In the 
dual case of the homotopy colimit, \cite[Sections~5-9,~13]{shulman} 
is helpful for relating \cite{DHKS} to the dual 
of the above statement. A helpful perspective 
is given in the discussion of homotopy right Kan extensions 
and homotopy limits in 
\cite[Section A.2.8; e.g., Remark A.2.8.8]{luriebook}.
\end{Rk}
\begin{Lem}\label{injectivebehavior}
Let $G$ be any profinite group and $\mathcal{C}$ any 
small category. If $Y^{\typical}$ is fibrant in $(\cat)^\mathcal{C}$, 
when $(\cat)^\mathcal{C}$ is 
equipped with the injective model structure, then the canonical 
map \[\lambda_G \: 
{\lim_\mathcal{C}}^G \, Y^{\typical} \overset{\simeq}{\longrightarrow}
{\holim_\mathcal{C}}^G \, Y^{\typical}\] is a weak equivalence in $\cat$. 
(Here, like before, 
${\displaystyle{\lim_\mathcal{C}}}^G \, Y^{\typical}$ and 
${\displaystyle{\holim_\mathcal{C}}}^G \, Y^{\typical}$ 
denote the limit and 
homotopy limit, respectively, in $\cat$ of $Y^{\typical}$.)
\end{Lem}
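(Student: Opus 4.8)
The plan is to detect the weak equivalence $\lambda_G$ in $\spt$ — recall that a map in $\cat$ is a weak equivalence exactly when it is one in $\spt$ — and to bootstrap from the diagram category $\mathrm{Spt}^{\mathcal{C}}$, where the corresponding assertion has already been proved inside the proof of Theorem \ref{derived}. One cannot apply that argument directly to the underlying $\mathcal{C}$-diagram of $Y^{\typical}$, since, as noted in Remark \ref{forgetfulfunctor}, the forgetful functor to $\mathrm{Spt}^{\mathcal{C}}$ need not preserve injective fibrancy. So instead I would pass through the fixed-point diagrams for the open normal subgroups of $G$ and then take a filtered colimit.

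First I would check that, for every open normal subgroup $N$ of $G$, the $\mathcal{C}$-diagram $(Y^{\typical})^N$ sending $C$ to $(Y^{\superc})^N$ is fibrant in $\mathrm{Spt}^{\mathcal{C}}$ with the injective model structure. Since $\mathrm{Res}_G^N \: \cat \rightarrow \mathrm{Spt}_N$ is right Quillen (by \cite[proof of Lemma 3.1]{iterated}), the induced functor $(\cat)^{\mathcal{C}} \rightarrow (\mathrm{Spt}_N)^{\mathcal{C}}$ preserves fibrant objects (by \cite[Remark A.2.8.6]{luriebook}), so $Y^{\typical}$, regarded as a diagram in $\mathrm{Spt}_N$, is fibrant in $(\mathrm{Spt}_N)^{\mathcal{C}}$; and then, exactly as in the first paragraph of the proof of Theorem \ref{derived} with $N$ in place of $G$ (the objectwise trivial-action functor preserves weak equivalences and cofibrations, so its right adjoint $(-)^N$ preserves fibrant objects), the diagram $(Y^{\typical})^N$ is fibrant in $\mathrm{Spt}^{\mathcal{C}}$. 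Applying the remainder of the proof of Theorem \ref{derived} — the reduction to levelwise simplicial sets and \cite[pg.~407, Lemma 2.11]{GJ} — to the fibrant diagram $(Y^{\typical})^N$ then gives that the canonical map
\[\lambda^N \: \lim_{\mathcal{C}} (Y^{\typical})^N \longrightarrow \holim_{\mathcal{C}} (Y^{\typical})^N\]
is a weak equivalence in $\spt$, for every $N \vartriangleleft_o G$.

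Next I would assemble the maps $\lambda^N$ over $N$. Limits and (end-formula) homotopy limits in $\cat$ are obtained from the corresponding constructions in $\spt$, with the diagonal $G$-action, by applying $\colim_{N \vartriangleleft_o G}(-)^N$, and for $N$ open normal the functor $(-)^N \: \cat \rightarrow \mathrm{Spt}_{G/N}$ is right adjoint to inflation $\mathrm{Spt}_{G/N} \rightarrow \cat$, hence commutes with the limits, products, and cotensors out of which $\lim_{\mathcal{C}}$ and $\holim_{\mathcal{C}}$ are built; moreover, for the finite group $G/N$, the underlying spectrum of a (homotopy) limit in $\mathrm{Spt}_{G/N}$ is the (homotopy) limit in $\spt$. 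Combining these facts, in the spirit of \cite[Remark 4.2]{cts} and \cite[Theorem 2.3]{fibrantmodel}, yields natural identifications ${\lim_{\mathcal{C}}}^G Y^{\typical} \cong \colim_{N \vartriangleleft_o G} \lim_{\mathcal{C}} (Y^{\typical})^N$ and ${\holim_{\mathcal{C}}}^G Y^{\typical} \cong \colim_{N \vartriangleleft_o G} \holim_{\mathcal{C}} (Y^{\typical})^N$, under which $\lambda_G$ becomes $\colim_{N \vartriangleleft_o G} \lambda^N$. Since stable homotopy groups commute with filtered colimits, $\colim_{N \vartriangleleft_o G} \lambda^N$ is a stable equivalence, and therefore $\lambda_G$ is a weak equivalence in $\spt$, hence in $\cat$.

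I expect the main obstacle to be precisely this last reduction — making rigorous that $\lambda_G$ really is the filtered colimit of the maps $\lambda^N$. The delicate point is that the homotopy limit $\holim_{\mathcal{C}}^G Y^{\typical}$ in $\cat$ is assembled from infinite products and cotensors that must be discretized, so one has to verify that its $N$-fixed part agrees with the ordinary homotopy limit in $\spt$ of the diagram $(Y^{\typical})^N$; equivalently, that the right Quillen functor $(-)^N \: \cat \rightarrow \mathrm{Spt}_{G/N}$ commutes with $\holim_{\mathcal{C}}^G$ on the objectwise-fibrant diagram $Y^{\typical}$. Once this bookkeeping is settled, everything else follows from Theorem \ref{derived} and the invariance of weak equivalences under filtered colimits.
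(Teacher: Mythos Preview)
Your proposal is correct and follows essentially the same route as the paper: show each $(Y^{\typical})^N$ is injectively fibrant in $\mathrm{Spt}^{\mathcal{C}}$ (via $\mathrm{Res}_G^N$ and the right-adjoint $(-)^N$, exactly as in Remark~\ref{forgetfulfunctor}), invoke the argument of Theorem~\ref{derived} to get each $\lambda^N$ a weak equivalence, and then identify $\lambda_G$ with $\colim_{N \vartriangleleft_o G}\lambda^N$ using \cite[Remark 4.2]{cts} and \cite[Theorem 2.3]{fibrantmodel}. The paper additionally records that source and target of each $\lambda^N$ are fibrant spectra before passing to the filtered colimit, but your appeal to $\pi_\ast$ commuting with filtered colimits handles this just as well; the ``obstacle'' you flag is exactly the passage the paper makes (without further comment) in going from its map $\lambda'_G$ to $\lambda''_G$.
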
 
\begin{proof}
By the initial comments in the proof of Theorem 
\ref{finallemmathree}, we can identify the map $\lambda_G$ with the 
canonical morphism
\[\lambda'_G \: 
\colim_{N \vartriangleleft_o G} (\lim_{\superc \in \mathcal{C}} Y^{\superc})^N 
\rightarrow \colim_{N \vartriangleleft_o G} 
(\holim_{\superc \in \mathcal{C}} Y^{\superc})^N\] in $\cat$, and thus, 
to obtain the desired conclusion, 
it suffices to show that the map 
\[\lambda''_G \: 
\colim_{N \vartriangleleft_o G} \, \lim_{\superc \in \mathcal{C}} 
(Y^{\superc})^N 
\rightarrow \colim_{N \vartriangleleft_o G} 
\, \holim_{\superc \in \mathcal{C}} (Y^{\superc})^N\] 
is a weak equivalence in $\mathrm{Spt}$.
\par
As in 
Remark \ref{forgetfulfunctor}, $\{(Y^{\superc})^N\}_{\superc \in \mathcal{C}}$ 
is fibrant in 
$\mathrm{Spt}^\mathcal{C}$, for each $N$, so that 
each 
$\lim_{\superc \in \mathcal{C}} (Y^{\superc})^N$ is fibrant in $\mathrm{Spt}$ 
and every $(Y^{\superc})^N$ is fibrant in $\mathrm{Spt}$. This last 
conclusion implies that 
each $\holim_{\superc \in \mathcal{C}} (Y^{\superc})^N$ 
is a fibrant spectrum. 
Therefore, $\lambda''_G$ is the filtered colimit of the maps 
\[\lambda^N \: \lim_{\superc \in \mathcal{C}} (Y^{\superc})^N 
\rightarrow \holim_{\superc \in \mathcal{C}} (Y^{\superc})^N,\] 
each of which is a map between 
fibrant spectra, and hence, to show that 
$\lambda''_G$ is a weak equivalence, we only have to show that 
each $\lambda^N$ is a weak equivalence. Since 
$\{(Y^{\superc})^N\}_{\superc \in \mathcal{C}}$ is fibrant in 
$\mathrm{Spt}^\mathcal{C}$, the main argument in the 
proof of Theorem \ref{derived} implies that $\lambda^N$ is a 
weak equivalence, completing the proof.
\end{proof}


\end{document}